\numberwithin{equation}{section}
\title{The gauge group of a noncommutative principal bundle and twist deformations} 
\author{Paolo Aschieri, Giovanni Landi, Chiara Pagani}
\address[]{\textit{Paolo Aschieri}  \newline \indent 
Universit{\`a} del Piemonte Orientale, 
\newline \indent 
Dipartimento di Scienze e Innovazione Tecnologica
\newline \indent   viale T.~Michel~11,~15121~Alessandria,~Italy,
\newline \indent  and INFN Torino, via P.~Giuria~1, 10125~Torino,~Italy,
\newline \indent  and Arnold-Regge Centre, Torino, via P.~Giuria~1, 10125~Torino,~Italy}
\email{paolo.aschieri@uniupo.it}
\address[]{\textit{Giovanni Landi}  \newline \indent 
Matematica, Universit\`a di Trieste, \newline \indent
Via A. Valerio, 12/1, 34127  Trieste, Italy, \newline \indent 
Institute for Geometry and Physics (IGAP) Trieste, Italy \newline \indent 
and INFN, Trieste, Italy
}
\email{landi@units.it}
\address[]{\textit{Chiara Pagani} \newline \indent   
Universit{\`a} del Piemonte Orientale, \newline \indent 
Dipartimento di Scienze e Innovazione Tecnologica
\newline \indent   viale T.~Michel~11,~15121~Alessandria,~Italy
\newline \indent and INDAM-GNSAGA}
\email{chiara.pagani@uniupo.it}
\theoremstyle{plain}
\newtheorem{thm}{Theorem}[section]
\newtheorem{lem}[thm]{Lemma}
\newtheorem{prop}[thm]{Proposition}
\newtheorem{cor}[thm]{Corollary}
\theoremstyle{definition}
\newtheorem{defi}[thm]{Definition}
\theoremstyle{remark}
\newtheorem{ex}[thm]{Example}
\newtheorem{rem}[thm]{Remark}
\newcommand{\nn}{\nonumber}
\newcommand{\ot}{\otimes}
\newcommand{\beq}{\begin{equation}}
\newcommand{\eeq}{\end{equation}}
\newcommand{\ra}{\rightarrow}
\newcommand{\tra}{\triangleright}
\renewcommand{\1}{1}
\newcommand{\bbK}{\mathbb{K}}
\newcommand{\id}{\mathrm{id}}
\newcommand{\A}{\mathcal{A}}
\newcommand{\G}{\mathcal{G}}
\renewcommand{\Q}{\mathcal{Q}}
\newcommand{\M}{\mathcal{M}}
\newcommand{\C}{\mathcal{C}}
\renewcommand{\O}{\mathcal{O}}
\newcommand{\IR}{\mathbb{R}}
\newcommand{\IC}{\mathbb{C}}
\newcommand{\IZ}{\mathbb{Z}}
\newcommand{\can}{\chi}
\newcommand{\hg}{H_\cot}
\renewcommand{\cot}{\gamma}
\newcommand{\co}[2]{\cot\left({#1}\ot{#2}\right)}
\newcommand{\coin}[2]{\bar\cot\left({#1}\ot{#2}\right)}
\newcommand{\mt}{\cdot_\cot}
\newcommand{\mtco}{\bullet_\cot}
\newcommand{\zero}[1]{{#1}_{\scriptscriptstyle{(0)}}}
\newcommand{\one}[1]{{#1}_{\scriptscriptstyle{(1)}}}
\newcommand{\two}[1]{{#1}_{\scriptscriptstyle{(2)}}}
\newcommand{\three}[1]{{#1}_{\scriptscriptstyle{(3)}}}
\newcommand{\four}[1]{{#1}_{\scriptscriptstyle{(4)}}}
\newcommand{\five}[1]{{#1}_{\scriptscriptstyle{(5)}}}
\newcommand{\six}[1]{{#1}_{\scriptscriptstyle{(6)}}}
\newcommand{\seven}[1]{{#1}_{\scriptscriptstyle{(7)}}}
\newcommand{\eight}[1]{{#1}_{\scriptscriptstyle{(8)}}}
\newcommand{\nine}[1]{{#1}_{\scriptscriptstyle{(9)}}}
\newcommand{\ten}[1]{{#1}_{\scriptscriptstyle{(10)}}}
\newcommand{\tuno}[1]{{#1}^{\scriptscriptstyle{<1>}}}
\newcommand{\tdue}[1]{{#1}^{\scriptscriptstyle{<2>}}}
\newcommand{\uzero}[1]{{#1}^{\scriptscriptstyle{(0)}}}
\newcommand{\uone}[1]{{#1}^{\scriptscriptstyle{(1)}}}
\newcommand{\utwo}[1]{{#1}^{\scriptscriptstyle{(2)}}}
\newcommand{\Aut}[1]{\mathrm{Aut}_B(#1)}
\newcommand{\F}{\mathsf{F}}
\newcommand{\f}{\mathsf{f}}
\newcommand{\g}{\mathsf{g}}
\newcommand{\Hom}{{\rm{Hom}}}
\newcommand{\Ad}{\mathrm{Ad}}
\newcommand{\R}{\mathcal{R}}
\renewcommand{\r}{R}
\newcommand{\qu}[2]{Q{\!} \left({#1}\ot{#2}\right)}
\newcommand{\ru}[2]{\r_{\!}\left({#1}\ot{#2}\right)}
\newcommand{\ruin}[2]{\bar{\r}_{\!}\left({#1}\ot{#2}\right)}
\newcommand{\rug}[2]{\r_\cot \left({#1}\ot{#2}\right)}
\newcommand{\tpr}{\,{\raisebox{.28ex}{\rule{.6ex}{.6ex}}}\,}
\renewcommand{\bot}{\boxtimes}
\newcommand{\bs}{\underline{S}}
\newcommand{\bh}{{\underline{H}}}
\newcommand{\bc}{{qc}}
\newcommand{\qcr}{{\A}^{(H,\r)}_\bc}
\newcommand{\qcrp}{{\A}^{(K,\r')}_\bc}
\newcommand{\qcrpp}{{\A}^{(H\otimes K,\r'')}_\bc}
\newcommand{\qcrc}{{\A}^{(\hg,\r_\gamma)}_\bc}
\newcommand{\hpr}{{\:\cdot_{^{_{{{\:\!\!\!\!\!-}}}}}}}
\newcommand{\RA}{\A^H}
\newcommand{\bdelta}{{\underline\delta}}
\begin{document}

\begin{abstract} 
We study noncommutative principal bundles (Hopf--Galois extensions) in
the context of coquasitriangular Hopf algebras and their monoidal
category of  comodule algebras. 
When the total space is quasi-commutative, and thus the base space subalgebra is central, we define the gauge group as
the group of vertical automorphisms or equivalently as the group of equivariant algebra maps.
We study Drinfeld twist (2-cocycle) deformations of
Hopf--Galois extensions and show that the gauge group of the twisted
extension is isomorphic to the gauge group of the initial extension.
In particular, noncommutative principal bundles arising via twist deformation of
commutative principal bundles have classical gauge group. We illustrate
the theory with a few examples.
\end{abstract}

\maketitle
\tableofcontents

\section{Introduction}

Noncommutative gauge theories have emerged in different contexts in
mathematics and physics.
The present study aims at a better understanding of the
geometric structures underlying these theories.
The relevant framework is that of noncommutative principal bundles that 
we approach from the algebraic perspective of Hopf--Galois extensions.
These first emerged as a generalization of classical Galois field
extensions and were later recognised to be suitable for a description
of principality of actions in algebraic and noncommutative geometry.
Aiming at the noncommutative differential geometry of Hopf--Galois
extensions, with a theory of connections and their moduli spaces,
in this paper we study the notion of group of
noncommutative gauge transformations.  \\

For Hopf--Galois extensions the group of gauge transformations was considered in \cite{brz-maj}
and further studied in \cite{brz-tr},  (see also \cite{strong}).
An unusual feature of these works is that the group 
there defined is bigger than one would expect. 
Classically, the group of gauge transformations of a principal $G$-bundle $P\to M$ is the group of vertical bundle  automorphisms $P\to
P$ or of  $G$-equivariant maps $P\to G$, 
for the adjoint action of $G$ onto itself. 
The pull-back of these maps to the algebra of functions
gives $O(G)$-equivariant algebra maps $\O(G)\to \O(P)$. 
However, with the definition of these papers, for the Hopf--Galois extension $\O(P)$
one would get the bigger group of $O(G)$-equivariant unital and convolution
 invertible linear maps $\O(G)\to \O(P)$, which are not necessarily algebra maps. This suggests for gauge transformations  to retain some algebra map property.

As a way of clarification, let us consider the simplest case of the
bundle $G\to \{*\}$ over a point. This is an elementary example of a Galois object, 
that is a Hopf-Galois extension of the ground field 
\cite[Def.7.11]{kassel-review}. 
Then, gauge transformations, as $O(G)$-equivariant algebra maps $\O(G)\to\O(G)$, are a copy of $G$ itself. Thus they make a much smaller group than 
that of all $O(G)$-equivariant unital and convolution invertible linear maps  $\O(G)\to \O(G)$.
Similarly, infinitesimal gauge transformations are left invariant
vector fields, giving then the Lie algebra $\frak{g}$ of $G$. In the dual picture
they act on $\O(G)$ as derivations, that is, as infinitesimal algebra maps.
Without requiring infinitesimal automorphisms of $\O(G)$ to be
derivations, one obtains the whole universal enveloping algebra
$U(\frak{g})$. With quantum groups one can consider their universal
enveloping algebra, or construct a quantum Lie algebra of left
invariant vector fields that are deformed derivations (\` a la
Woronowicz \cite{wor}). 

Quantum Lie algebras have been used for infinitesimal gauge
transformations for gauge field theories for example in \cite{Cast, Cast1}.
A further independent argument in favour of a theory of noncommutative gauge
groups that does not drastically depart from the 
classical one comes from the Seiberg-Witten map between 
commutative and noncommutative gauge theories  \cite{Seiberg:1999vs}. This map (initially
considered for noncommutative gauge theories in the
context of string theory and related flux compactification) 
establishes a one-to-one correspondence between commutative and
noncommutative gauge transformations and hence points to a 
noncommutative gauge group that is a deformation of the classical one.

Other studies suggesting a view on gauge transformations as (deformed) algebra maps
are those on noncommutative instanton moduli spaces, for example \cite{LS}
for instantons on the principal bundle on the noncommutative four sphere
$S^4_\theta$ \cite{LS0}.  There the dimension of the moduli space survives the $\theta$-deformation (see also \cite{BL}). 
\\

In the present paper
we study the group of  gauge transformations as the group of
equivariant algebra maps. By way of comparison, let us anticipate here our results for the case of the Galois object 
$(\O(G)_{\bullet_\cot}, \O(G)_\cot)$, with the quantum structure group $\O(G)_\cot$ coacting on the total space
algebra $\O(G)_{\bullet_\cot}$ (in general this is not a trivial Hopf--Galois extension, but only a cleft one). 
We change the multiplication in $\O(G)_\cot$ by considering the braided Hopf algebra
$\underline{\O(G)_\cot}\;\!$ so that we find gauge transformations as
$\O(G)_\cot$-equivariant algebra maps  $\underline{\O(G)_\cot}\to
\O(G)_{\bullet_\cot}$. These are deformed algebra maps with respect to
the initial product in $\O(G)_\cot$.

In \cite{ppca} noncommutative principal bundles 
were revisited and considered in a categorical perspective.
Since noncommutative principal bundles with
Hopf algebra (quantum structure group) $H$ are $H$-comodule algebras
$A$ with a canonically given $H$-equivariant map $\chi$, required to be
invertible,  the basic category where to study these objects is
that of $H$-(co)representations, that is that of $H$-comodules.
A second category where they can be studied is
that of $H$  and $K$-comodules, with $K$ an ``external Hopf
algebra of symmetries'', a Hopf algebra associated
with the automorphisms of a Hopf--Galois extension (classically a group acting via
equivariant maps differing from the identity on the base space). 
In \cite{ppca}  
Hopf Galois extensions were studied in these two categorical settings and it was 
shown that Drinfeld twists (Hopf algebra 2-cocycles) deform functorially
Hopf--Galois extensions to Hopf--Galois extensions.
Considering a twist on the Hopf algebra $H$ leads to a deformation
of the fibers of the principal bundle; considering a twist on the
external symmetry Hopf algebra $K$ leads to a
deformation of the base space. Combining twists on $H$ and on $K$ one
obtains deformations of both the fibers and the base space.
Many examples were provided starting from commutative principal bundles.
\\

In the present paper we work within the representation
category of an Hopf algebra $H$, with $A$ an $H$-comodule algebra.
We study gauge transformations of noncommutative
principal bundles $B=A^{co H}\subseteq A$ with quantum structure
group $H$, noncommutative total space $A$ and commutative base
$B$. Examples motivating the interest in this case  include also 
quantum group gauge theory on lattices, that is related
to models quantizing the algebra of observables of Chern-Simons theory
\cite{Meus-Wise}.

In a sequel paper 
we consider Hopf algebras $H$
and Hopf--Galois extensions in a category of $K$-comodules.
In this  richer context we study gauge transformations of
Hopf--Galois extensions with noncommutative bases (for instance noncommutative
tori and related manifolds). A further motivation
for these studies comes from the relevance of noncommutative gauge
field theories for string theory and related
compactifications. There $U(N)$ gauge theories on noncommutative tori
naturally emerge \cite{Connes:1997cr}. In that context already
considering simple Lie groups (like $SU(N)$ or $SO(N)$) is
problematic, one way out being 
the use of the Seiberg-Witten map between commutative and
noncommutative gauge theories \cite{Seiberg:1999vs, WESS}, another
approch possibly being the Hopf--Galois one we are pursuing. 
\\

Before considering gauge transformations as algebra
maps, we study {conditions for} the canonical map $\chi$ to be an algebra map. 
The natural  categorical setting for addressing this question is that of
coquasitriangular Hopf algebras. Indeed in this context the category of  $H$-comodule
algebras is a monoidal category.  We show that the
canonical map is a morphism in the category when the
multiplication in $A$ is a morphism as well
(we call such comodule algebras  quasi-commutative). 
This implies that the base $B$ is commutative.
Canonically associated with a coquasitriangular Hopf algebra $H$ we
have the braided Hopf algebra $\bh$. The gauge group is first defined
as the set of $H$-equivariant (unital) algebra maps $\bh\to A$ and then proven to be a group.  
A second approach is to define the gauge group as the set of $H$-equivariant
algebra maps $A\to A$ that restrict to the identity on $B$. This corresponds
to the classical picture of vertical authomorphisms of a principal bundle.
Here too we prove that these maps form a group. These two definitions
of gauge group are then shown to be equivalent, and the theory is
illustrated with examples. \\

We study next Drinfeld twist deformations of Hopf--Galois
extensions and of their gauge groups. 
We refine the results in \cite{ppca} to the case of coquasitriangular
and cotriangular  Hopf algebras.
A twist on $H$ induces an equivalence of the associated monoidal
categories, and braided Hopf algebras are twisted to braided Hopf algebras.
The equivalence of the possible different twisting procedures is
proven via a map $\Q$. This map is related to the natural isomorphism
that gives the equivalence of
the categories  of Hopf algebra modules and of twisted Hopf algebra 
modules as closed monoidal categories.
These results allow us to conclude that Hopf--Galois extensions
$B=A^{coH}\subseteq A$, with
canonical map $\chi$ that is an algebra map, are twisted to
Hopf--Galois extensions $B=A_\gamma^{co H_\gamma}\subseteq A_\gamma$ with 
canonical map $\chi_\gamma$ that is an algebra map.
The twist functor is then applied to the two equivalent
characterizations of the gauge group of a Hopf--Galois extension. By
using again the map $\Q$ we show that the initial gauge
group and the twisted one are isomophic. In particular
cleft (but not necessarily trivial) Hopf--Galois extensions obtained twisting
trivial Hopf--Galois extensions have isomorphic gauge groups. \\

Finally, we consider tensor products of noncommutative
principal bundles and study the
resulting gauge groups.
Combining the tensor product construction and the twisting procedure
we construct interesting examples.
 In particular we study the noncommutative
principal fibration of spheres $S^7\times_\gamma S^1\to S^4$ on the
commutative 4-sphere. The structure group is $U_q(2)$, a cotriangular
deformation of the unitary group, and the gauge group of this
Hopf--Galois extension is isomorphic to the direct product of the 
classical gauge group of the
instanton bundle on the 4-sphere $S^4$ with the group of $U(1)$-valued
functions on $S^4$. \\

We mention that 
the idea of using a braiding that renders the Hopf-Galois canonical map an algebra homomorphism, so as to enable one to define gauge transformations, was already put forward in \cite{du96}. There it is shown that the algebra structure on the left-hand side of the canonical map, that is induced from the tensor algebra on its right-hand side, is given by a braiding of Hopf algebras. Then, for cosemisimple commutative Hopf algebras this braiding was used to define gauge transformations as algebra homomorphisms. 
While the braiding in \cite{du96} is written using the inverse of the canonical map, in the present paper 
the braiding comes from the coquasitriangular structure of the class of Hopf algebras considered. 
 
\subsection{Background material} \label{sec:bg-mat}~\\[.5em]
We work in the category of $\bbK$-modules, for $\bbK$ a fixed commutative field with unit $\1_\bbK$ or the ring of formal power series in a variable $\hbar$ over a field.
Much of what follows can be generalised to $\bbK$ a commutative unital ring. 
We denote simply by $\otimes$ the tensor product over $\bbK$.  
All algebras will be over $\bbK$ and assumed to be unital and associative. 
The product in an algebra $A$ is denoted by $m_A: A \ot A \ra A$,  $a \ot b \mapsto ab$ and 
 the unit map by $\eta_A: \bbK \ra A$, with $\1_A:= \eta_A(\1_\bbK)$ the unit element.  
Morphisms of algebras will be assumed to be unital. 
Analogously all coalgebras will be over $\bbK$ and assumed to be counital and coassociative. 
The  coproduct and counit of a coalgebra $C$ are denoted by $\Delta_C: C \ra C \ot C$ and  $\varepsilon_C: C \ra \bbK$ respectively. 
We use the standard Sweedler notation for the coproduct:  $\Delta_C(c)= \one{c} \ot \two{c}$ (sum understood), for all $ c \in C$, 
and for its iterations: $\Delta_C^n=(\id \ot  \Delta_C) \circ\Delta_C^{n-1}: c \mapsto \one{c}\ot \two{c} \ot 
\cdots \ot c_{\scriptscriptstyle{(n+1)\;}}$, $n >1$.
We denote by $*$ the convolution product in the  dual $\bbK$-module
$C':=\mathrm{Hom}(C,\bbK)$, $(f * g) (c):=f(\one{c})g(\two{c})$, for all $c \in C$, $f,g \in C'$.
For a Hopf algebra $H$, we denote by $S_H: H \ra H$ its antipode. 
For all these maps we will omit the subscripts which refer to
the co/algebras involved when no risk of confusion can occur.  
We simply write $V \in \mathcal{C}$ for an object $V$ in a category $\mathcal{C}$, and $\Hom_\mathcal{C}(-,-)$ for morphisms between any two objects.
Finally, all monoidal categories in this paper will have a trivial associator, hence we can unambiguously write $V_1\otimes V_2\otimes \cdots \otimes V_n$ for the tensor product of $n$ objects.

Given a bialgebra (or a Hopf algebra) $H$, we denote by $\M^H$ the category of right $H$-comodules:
a right $H$-comodule is a $\bbK$-module $V$ with a $\bbK$-linear map
$\delta^V:V\to V\otimes H$ (a right $H$-coaction) such that 
\beq \label{eqn:Hcomodule}
(\id\otimes \Delta)\circ \delta^V = (\delta^V\otimes \id)\circ \delta^V~,\quad 
(\id\otimes \varepsilon) \circ \delta^V =\id~. 
\eeq
In Sweedler notation we write $\delta^V:V\to V\otimes H$, $v\mapsto \delta^V =
\zero{v}\otimes \one{v}$, and the right $H$-comodule properties
\eqref{eqn:Hcomodule}  read, for all $v\in V$,
$$
\zero{v} \otimes \one{(\one{v})}\otimes \two{(\one{v})} = \zero{(\zero{v})}\otimes \one{(\zero{v})} \otimes \one{v}=: \zero{v} \ot\one{v} \ot \two{v} ~,\qquad
\zero{v} \,\varepsilon (\one{v}) = v~.
$$
A morphism between  $V, W \in  \M^H$ is a $\bbK$-linear map $\psi:V\to W$ which is $H$-equivariant: 
$\delta^W\circ \psi=(\psi\otimes\id)\circ \delta^V$.  We equivalently
say that  $\psi:V\to W$ is an $H$-comodule map.
 
In fact, $\M^H$ is a monoidal category:  given
$V,W\in \M^H$, the tensor product
$V\otimes W$ of $\bbK$-modules is an object in $ \M^H$ with  the right $H$-coaction  
\begin{align}\label{deltaVW}
 \delta^{V\otimes W} :V\otimes W \longrightarrow  V\otimes W\otimes H~, \quad
 v\otimes w \longmapsto \zero{v}\otimes \zero{w} \otimes 
 \one{v}\one{w} ~.
\end{align}
The unit object in $\M^H$ is $\bbK$ with coaction $\delta^{\bbK}$
given by the unit map $\eta_H:\bbK\to \bbK\otimes H \simeq H$.
 
We denote by $\A^H$ the category of
right $H$-comodule algebras: 
a right $H$-\textbf{comodule algebra} is an algebra  $A$  which is 
a right $H$-comodule such that the multiplication and unit of $A$ are morphisms of $H$-comodules. 
This is equivalent to requiring the coaction $\delta^A: A\to A\otimes H$ to be
a morphism of unital algebras (where $A\otimes H$ has the usual tensor
product algebra structure):  for all $a,a^\prime\in A \, $,
$$
\delta^A(a\,a^\prime) =\delta^A(a)\,\delta^A(a')~~\;,\quad
\delta^A(\1_A) = \1_A\otimes \1_H~\; .
$$
Morphisms in $\A^H$  are $H$-comodule maps which are also algebra maps.
 
We denote by  ${\C}^H$ the category of right $H$-comodule coalgebras: a right $H$-\textbf{comodule coalgebra}  
is a coalgebra $C$ which is a right $H$-comodule and such that the
coproduct and the counit are morphisms of $H$-comodules 
that is, for each $c \in C$
\beq\label{com-co}
\zero{(\one{c})} \ot \zero{(\two{c})} \ot \one{(\one{c})} \one{(\two{c})} 
=
\one{(\zero{c})} \ot \two{(\zero{c})} \ot \one{c} \, , 
\quad  
\varepsilon(\zero{c}) \one{c}=\varepsilon(c) 1_H \, .
\eeq
 Morphisms in ${\C}^H$ 
are $H$-comodule maps which are also coalgebra maps.

Let $H$ be a bialgebra and let $A\in\A^H$.
An $(A,H)$-\textbf{relative Hopf module}
$V$ is a right $H$-comodule with a compatible left $A$-module structure,
that is  the left  $A$-action $\tra_V$ is a morphism of $H$-comodules such that 
the following diagram commutes
\beq\label{AMHdiag}
\xymatrix{
\ar[d]_-{\tra_V} A\otimes V \ar[rr]^-{\delta^{A \ot V}} &&\ar[d]^-{\tra_V \otimes \id} A \otimes V\otimes H\\
V \ar[rr]^-{\delta^V}&& V\otimes H
}
\eeq
Explicitly, for all $a\in A$ and $v\in V$,
\beq\label{eqn:modHcov} 
\zero{(a \tra_V v)} \ot \one{(a \tra_V v)} = \zero{a} \tra_V \zero{v} \ot \one{a}\one{v} ~. 
\eeq
A morphism of 
$(A,H)$-relative Hopf modules is a morphism of
right $H$-comodules  which is also an $A$-linear map, that is a morphism of left $A$-modules.
We denote by $ {}_{A}\M^H$ the category of 
$(A,H)$-relative Hopf modules.
In a similar way one defines the categories of relative Hopf modules ${{\mathcal M}_A}^{\!H}$ for $A$ acting on the right, 
 and ${{}_{E}{\mathcal M}_A}^{\!H}\,$ for right $A$ and left $E$
 compatible actions, with $E\in \A^H$.

\section{Hopf--Galois extensions for coquasitriangular Hopf algebras}

We consider noncommutative principal bundles as Hopf--Galois
extensions. These are $H$-comodule algebras
$A$ with a canonically defined map $\chi: A\ot_B A\to A\ot H$ which is required to be
invertible.
We first consider the 
category of $(A,H)$-relative Hopf modules  and understand  within this monoidal category  the
notion of Hopf--Galois extension, that is the bijectivity of the 
map $\chi$.
This is done in \S 2.1, where  we see that the monoidal structure forces $H$
in $A\ot H$ to be considered as an $H$-comodule with the adjoint
action $\Ad$, denoted $\bh$.  In \S 2.2 we consider the case of  $H$
coquasitriangular, where the
category of $H$-comodule algebras is monoidal. 
The braided Hopf algebra $\bh$ with the adjoint action $\Ad$ is an
$H$-comodule algebra so that both $ A\ot A$ and $A\ot H$ are $H$-comodule
algebras.
The canonical map is then proven to be an algebra map provided $A$ is
quasi-commutative.

 \subsection{Hopf--Galois extensions}\label{sec:hge}~\\[-1.3em]
\begin{defi} \label{def:hg}
Let $H$ be a Hopf algebra and let $A\in\A^H$ with coaction $\delta^A$.
Consider the subalgebra $B:= A^{coH}=\big\{b\in A ~|~ \delta^A (b) = b \ot \1_H \big\} \subseteq A$ of coinvariant elements 
(elements invariant under the $H$-coaction) and let
$A \ot_B A := A \ot A / \, {\!\langle a \ot b a' - a b \ot a'
  \rangle}_{a, a' \in A, \, b \in B}$
be the corresponding balanced tensor product.
 The extension $B\subseteq 
A$ is called an $H$-\textbf{Hopf--Galois extension} provided the
(so-called) {\bf canonical  map}
\begin{align}\label{canonical}  \can := (m \ot \id) \circ (\id \otimes_B \delta^A ) : A \otimes_B A  \longrightarrow A \ot H~  ,  \quad a' \ot_B a \longmapsto a' a_{\;(0)} \ot a_{\;(1)} 
\end{align} 
is bijective. 
\end{defi}

The canonical map $\can$ is a morphism in the category ${_A{\mathcal  M}_A}^{H}$ of relative Hopf modules \cite{ppca}.
Both $A \otimes_B A$ and $A \ot H$ are objects in  ${_A{\mathcal  M}_A}^{H}$. 
The left $A$-module structures are given by the left multiplication on the first factors while the 
right $A$-actions are given by
$$
(a \ot_B a')a'':=  a \ot_B a'a'' \quad \mbox{and} \quad (a \ot h) a' := a \zero{a'} \ot h \one{a'}~.
$$
As for the $H$-comodule structure, the tensor product $A\ot A$ has the natural 
right $H$-coaction induced by the monoidal structure of $\M^H$,
as in \eqref{deltaVW}, 
\beq\label{AAcoact}
\delta^{A\otimes A}: A\otimes A\to A\otimes A\otimes H, \quad a\otimes a'  \mapsto  \zero{a}\otimes \zero{a'} \otimes  \one{a}\one{a'} \, 
\eeq 
for all $a,a'\in A$. This descends to the quotient $A\ot_B A$ because $B\subseteq A$ is the
subalgebra of $H$-coinvariants.  
Similarly,  $A \ot H$ is endowed with the tensor product coaction, where   we  regard the Hopf algebra $H$ as a right $H$-comodule
with the right adjoint $H$-coaction
\beq\label{adj}
\mathrm{Ad} :  
h \longmapsto \two{h}\otimes S(\one{h})\,\three{h} ~.
\eeq
The right $H$-coaction on $A \ot H$ 
is then given again as in \eqref{deltaVW} by 
\beq\label{AHcoact}
\delta^{A\otimes H}(a\otimes h) = \zero{a}\otimes \two{h} \otimes \one{a}\,S(\one{h})\, \three{h} \in A \ot H \ot H~
\eeq 
for all $ a\in A,~ h \in H$. 
Both $A\ot_B A$ and  $A\otimes H$ are shown to be objects in  ${_A{\mathcal M}_A}^{H}$ with respect to these structures and 
$\can$ to be a morphism in the category ${_A{\mathcal  M}_A}^{H}$ of
relative Hopf modules (see \cite{ppca} for details, and see Appendix \ref{app:hge} for a comparison with other descriptions of the map $\can$ as a morphism of relative Hopf modules).

Since the canonical map $\can$ is left $A$-linear, its inverse is
determined by the restriction $\tau:=\chi^{-1}_{|_{1 \ot H}}$, named \textbf{translation map},
\begin{eqnarray*}
\tau= {\chi^{-1}}{|_{_{1 \ot {H}}}}: H\to A\otimes_B A , 
\quad h \mapsto \tuno{h} \ot_B \tdue{h}~.
\end{eqnarray*}
We recall for later use the following properties of the translation map (see Appendix \ref{app:t}).
$$
(\id \ot_B \delta^A)\circ \tau =(\tau \ot \id) \Delta \quad, \quad
 (  \tau  \ot  S) \circ \mbox{flip} \circ \Delta= (\id \ot \mbox{flip}) \circ (\delta^A \ot_B \id) \circ \tau  ~,
$$
that  on the generic element $h \in H$ respectively read
\beq\label{p4} 
\tuno{h} \ot_B \zero{\tdue{h}} \ot \one{\tdue{h}} = \,\tuno{\one{h}} \ot_B \tdue{\one{h}} \ot 
\two{h} ~,
\eeq
\beq\label{p1} 
~~ \tuno{\two{h}}  \ot_B \tdue{\two{h}} \ot S(\one{h}) = \zero{\tuno{h}} \ot_B {\tdue{h}}  \ot \one{\tuno{h}}
~.
\eeq

A Hopf--Galois extension is {\bf cleft} if there exists a convolution
invertible morphism of $H$-comodules $j : H \to A$ (the {\bf cleaving
map}), where $H$ has coaction $\Delta$. This is equivalent to an isomorphism $A \simeq B \otimes H$ of left
$B$-modules and right $H$-comodules, where $B \otimes H$ is a left
$B$-module via multiplication on the left and a right H-comodule via $\id \otimes \Delta$.
A Hopf--Galois extension is a trivial extension if the cleaving map is
also an algebra map.

Commutative Hopf--Galois extensions typically arise when considering
principal $G$-bundles. Twisted versions will be described in Section \ref{sec:HGtwist} below.

\begin{ex}\label{affinecase}
Let $G$ be a semisimple affine algebraic group and 
let $\pi: P \ra P/G$ be a principal $G$-bundle with $P$ and $P/G$ affine varieties. 
Let $H=\O(G)$ be the dual coordinate Hopf algebra and $A=\O(P)$, $B=\O(P/G)$ the corresponding coordinate algebras. 
Let  $B\subseteq A$ be the subalgebra of functions
constant on the fibers, we then have $B=A^{co H}$ and $\O(P\times_{P/G}P)
\simeq A\otimes_B A$. The bijectivity of the map $P\times G\to P\times_{P/G}
P$, $(p,g)\mapsto (p,pg)$, characterizing principal bundles in
this context, corresponds to the bijectivity of the canonical
map $\chi: A\otimes_B A\to A\otimes H$, thus showing that $B=A^{co
  H}\subseteq A$ is a Hopf--Galois extension (see e.g. \cite[\S 8.5]{mont} and \cite[Thm.3.1.5]{Dasc}). An important notion is that of the classical translation map $t: P\times_{P/G} P\to G$,
$(p,q)\mapsto t (p,q)$ where $q=p_{\,}t(p,q)$. Properties \eqref{p4} and \eqref{p1}
then read: $t(p,qg)=t(p,q)g$ and $t(pg,q)=g^{-1}t(p,q)$.
\qed\end{ex}

\subsection{Coquasitriangular Hopf algebras}~\\[.6em]
We begin by recalling basic properties of coquasitriangular Hopf algebras; for
proofs we refer e.g. to \cite[Ch.10]{KS} or \cite[Ch.2]{Majid}. We
then study the monoidal category of comodule algebras
$(\RA,\bot)$ and the braided Hopf algebra $\underline H\in \RA$.

\begin{defi}\label{def:cqt}
A bialgebra $H$ is called \textbf{coquasitriangular} (or dual
quasitriangular) if it is endowed with a linear form $\r:H \ot H \ra \bbK$ such that 
\begin{enumerate}[(i)]
\item $\r$ is  invertible for the convolution product, with inverse denoted by
 $\bar \r$;  
\item $m_{op} = \r * m * \bar \r$, that is, for all $h,k \in H$,
\beq\label{gira-H}
kh= \ru{\one{h}}{\one{k}} {\two{h}}{\two{k}} \ruin{\three{h}}{\three{k}} ;
\eeq
\item\label{iiiR} $\r\circ (m \ot \id)=\r_{13}* \r_{23}$ \quad and \quad $\r\circ (\id \ot m)=\r_{13}* \r_{12}$, \\
where $\r_{12}(h \ot k \ot l)=\ru{h}{k}\varepsilon(l)$  and similarly
 for $\r_{13}$ and $\r_{23}$;\\
in components, for all $h,k,l \in H$, these conditions read 
\beq\label{eq-iii}
\ru{hk}{l}=\ru{h}{\one{l}}\ru{k}{\two{l}} \quad \mbox{and} \quad \ru{h}{kl}= \ru{\two{h}}{k}\ru{\one{h}}{l} .
\eeq
 \end{enumerate}
The linear form $\r$ is called a \textbf{universal $R$-form} of $H$. 
If $(H,R)$ is coquasitriangular then so is $(H,\bar{R}_{21})$ where $R_{21}(h\otimes k):=R(k\otimes h)$ for all $h,k\in H$.
A coquasitriangular bialgebra $(H,\r)$ is called \textbf{cotriangular} if $\r= \bar{\r}_{21}$.

A Hopf algebra $H$ is called co(quasi)triangular if it is such as a bialgebra.
\end{defi}

\begin{ex}\label{trivialex}
Any commutative bialgebra $H$ 
is cotriangular with (trivial) universal $R$-form $R=\varepsilon \ot \varepsilon$.
\qed\end{ex}

Note that if a  coquasitriangular bialgebra $(H,R)$ is cocommutative,
then it is commutative. Nonetheless, this does not imply that $R$ is  trivial:
\begin{ex} \label{ex:CZzero} Let $H=\mathbb{C}\mathbb{Z}$ be the group
  Hopf-algebra of the abelian group $\mathbb{Z}$. It is the
  commutative and cocommutative Hopf algebra generated by an
  invertible element $g$, $\mathbb{C}\mathbb{Z}=\mathbb{C}[g,g^{-1}]$,
  with $\Delta(g)=g\otimes g$, $\varepsilon(g)=1$, $S(g)=g^{-1}$.  For
  every complex number $q\neq 1$, this Hopf algebra is
  coquasitriangular with $R$-form $R_q(g^n,g^m)=q^{-nm}$. 
\qed\end{ex}

\begin{ex}
The FRT bialgebras $\O(G_q)$, noncommutative deformations of the coordinate algebra 
on the Lie groups $G$ of the $A,B,C,D$ series, are
coquasitriangular  \cite{FRT}.
\qed\end{ex}
\begin{ex}\label{ex:rug}
If $(H,\r)$ is a coquasitriangular Hopf algebra and $\cot: H \ot H \ra
\bbK$ is a $2$-cocycle on $H$, then the Hopf algebra $\hg$  with 
twisted product and antipode (see \S \ref{sec:twists-hopf}) is also coquasitriangular with universal $R$-form
\beq\label{rug}
\r_\cot :=\cot_{21} *\r* \bar{\cot}: h \ot k \longmapsto
\co{\one{k}}{\one{h}}\ru{\two{h}}{\two{k}} \coin{\three{h}}{\three{k}} ~,
\eeq
where $\bar\cot:H\ot H\to \bbK$ is the convolution inverse of
  $\cot$. The (convolution) inverse of $R$ is $\bar\r_\cot :=\cot *\bar\r * \bar{\cot}_{21}$. If  $(H,\r)$ is  cotriangular, then  $(\hg,\r_\cot)$ is cotriangular.
\qed\end{ex}

From its definition, it follows that the $R$-form of a coquasitriangular bialgebra $(H,R)$ is
normalized, that is, for all $h\in H$,
\beq\ru{1}{h } = \varepsilon(h) = \ru{h}{1},\label{Rnormalized}
\eeq 
and that it satisfies the Yang--Baxter-Equation
$\r_{12}* \r_{13}* \r_{23}= \r_{23} *\r_{13}*\r_{12}$,
 that is, for all $h,k,l \in H$,
\beq\label{YB}
\ru{\one{h}}{\one{k}} \ru{\two{h}}{\one{l}} \ru{\two{k}}{\two{l}}
= 
 \ru{\one{k}}{\one{l}} \ru{\one{h}}{\two{l}} \ru{\two{h}}{\two{k}}~.
\eeq
If in addition $H$ is a Hopf algebra, then for all $h,k\in H$ we have
\beq\label{prop-R}
\ru{S(h)}{k}= \ruin{h}{k} \; ; \quad
\ruin{h}{S(k)}= \ru{h}{k} \; , 
\eeq
from which it also follows  $\ru{S(h)}{S(k)}= \ru{h}{k}$. Furthermore, 
the antipode $S$ of $H$ is invertible with inverse $S^{-1}=u_\r * S * \bar{u}_\r$, where
\beq
u_\r:h \longmapsto \ru{\one{h}}{S(\two{h})} \quad ;\quad 
\bar{u}_\r:h \longmapsto \ruin{S(\one{h})}{\two{h}} \; .
\eeq
Finally, when $(H,\r)$  is coquasitriangular, the monoidal category of right
$H$-comodules $\M^H$ is braided monoidal with 
braiding given by the $H$-comodule isomorphisms
\beq\label{braiding}
\Psi^{R}_{V,W}: V \ot W  \longrightarrow W \ot V ~, \quad v\ot w \longmapsto  
\zero{w}\ot \zero{v} ~\ru{\one{v}}{\one{w}}~.
\eeq
  
We can now recall a key feature of coquasitriangular Hopf algebras:
tensor products of comodule algebras are comodule algebras and 
tensor products of comodule algebra maps are again comodule
algebra maps.
\begin{prop}\label{prop:tpr0}
Let $(H,\r)$ be a coquasitriangular bialgebra. Let  $(A,\delta^A), (C,\delta^C) \in \A^H$
be right $H$-comodule algebras. 
Then the $H$-comodule $A \ot C$ (with tensor product coaction $\delta^{A \ot C}: a \ot c \mapsto \zero{a} \ot \zero{c} \ot \one{a}\one{c}$
as in \eqref{deltaVW}) is a right $H$-comodule algebra when endowed with the product
\beq\label{tpr}
(a \ot c) \tpr (a' \ot c'):= a~  \Psi^R_{C,A} (c \ot a') c'=
a\zero{a'} \ot \zero{c} c' ~\ru{\one{c}}{\one{a'}} \, .
\eeq
Moreover, when $\phi: A\to E$ and $\psi: C\to F$ 
are $H$-equivariant algebra maps, that is morphisms of $H$-comodule algebras, then so is 
the map $\phi\otimes\psi : A \ot C \to E \ot F$, $a\otimes c\mapsto
\phi(a)\otimes \psi(c)$,
where $A\otimes C$ and $E\otimes F$ are endowed with the $\tpr$-products in \eqref{tpr}.
\end{prop}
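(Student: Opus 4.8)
The plan is to recognize the product \eqref{tpr} as the \emph{braided tensor product} of the algebra objects $A$ and $C$ in the braided monoidal category $(\M^H,\ot,\Psi^R)$, whose braiding \eqref{braiding} is supplied by the coquasitriangular structure of $H$. Writing $m_{A\ot C}:=\tpr=(m_A\ot m_C)\circ(\id\ot\Psi^R_{C,A}\ot\id)$ (one checks directly that this composite reproduces $a\ot c\ot a'\ot c'\mapsto a\zero{a'}\ot\zero{c}c'\,\ru{\one{c}}{\one{a'}}$), equivariance of $\tpr$ is immediate: it is a composite of the $H$-comodule maps $m_A$, $m_C$ and the braiding $\Psi^R_{C,A}$, hence a morphism in $\M^H$. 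Likewise the candidate unit $\1_A\ot\1_C$ is an $H$-comodule map, since the units of $A$ and $C$ are. Thus, once associativity and unitality are established, $A\ot C$ is an algebra object in $\M^H$, which is exactly the statement that it is a right $H$-comodule algebra with tensor product coaction \eqref{deltaVW}.

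Unitality I would verify from the normalization \eqref{Rnormalized}: using that the units are coinvariant, $(\1_A\ot\1_C)\tpr(a'\ot c')=\zero{a'}\ot c'\,\ru{\1_H}{\one{a'}}=\zero{a'}\,\varepsilon(\one{a'})\ot c'=a'\ot c'$, and symmetrically $(a\ot c)\tpr(\1_A\ot\1_C)=a\ot c$ using $\ru{\cdot}{\1_H}=\varepsilon$. The substantive point is associativity. Here I would expand both $\big((a\ot c)\tpr(a'\ot c')\big)\tpr(a''\ot c'')$ and $(a\ot c)\tpr\big((a'\ot c')\tpr(a''\ot c'')\big)$ in Sweedler notation. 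The two expressions differ only in how the $R$-form is applied when the braiding is slid past a product in $A$ (on the left) or in $C$ (on the right), and matching them reduces precisely to the two multiplicativity relations \eqref{eq-iii}, together with the comodule-algebra property that $m_A$ and $m_C$ are comodule maps and with coassociativity of $\Delta$. Categorically this is the statement that $\Psi^R$ satisfies the hexagon axioms, which for the coquasitriangular braiding are conditions (iii) of Definition \ref{def:cqt}.

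For the functoriality statement I would argue directly. That $\phi\ot\psi$ is an $H$-comodule map is automatic, being the monoidal product in $\M^H$ of the comodule maps $\phi$ and $\psi$. To see that it is an algebra map for the $\tpr$-products, I would compute $(\phi\ot\psi)\big((a\ot c)\tpr(a'\ot c')\big)=\phi(a)\phi(\zero{a'})\ot\psi(\zero{c})\psi(c')\,\ru{\one{c}}{\one{a'}}$, using that $\phi$ and $\psi$ are algebra maps, and then rewrite the legs in the $R$-form via equivariance, namely $\phi(\zero{a'})\ot\one{a'}=\zero{(\phi(a'))}\ot\one{(\phi(a'))}$ and $\psi(\zero{c})\ot\one{c}=\zero{(\psi(c))}\ot\one{(\psi(c))}$, to recognize the result as $\big(\phi(a)\ot\psi(c)\big)\tpr\big(\phi(a')\ot\psi(c')\big)$. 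Conceptually this is nothing but the naturality of the braiding $\Psi^R$ with respect to the comodule-algebra maps $\phi$ and $\psi$.

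The hard part will be the bookkeeping in the associativity computation: one must track several Sweedler legs and apply \eqref{eq-iii} on the correct tensor factors, being careful that the braiding interacts with $m_A$ on one side and with $m_C$ on the other. All the remaining steps are short. I would therefore either present associativity as the hexagon/string-diagram argument in $(\M^H,\Psi^R)$, or carry out the explicit Sweedler computation, the only inputs in either case being the relations \eqref{eq-iii}.
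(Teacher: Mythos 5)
Your proof is correct, but it packages the key step differently from the paper. A right $H$-comodule algebra can be characterized in two equivalent ways (both recalled in \S 1.1): the multiplication and unit are morphisms of $H$-comodules, or the coaction is an algebra map. You take the first route: writing $\tpr=(m_A\ot m_C)\circ(\id\ot\Psi^R_{C,A}\ot\id)$, colinearity of the product is automatic because $m_A$, $m_C$ and the braiding $\Psi^R_{C,A}$ are all morphisms in $\M^H$, so the only real work is associativity (which, as you say, reduces to \eqref{eq-iii} together with multiplicativity of $\delta^A$, $\delta^C$ --- I checked that this reduction goes through) and unitality via \eqref{Rnormalized}. The paper instead takes the second route: it verifies that the coaction $\delta^{A\ot C}$ is multiplicative for the $\tpr$-product, which is where \eqref{gira-H} enters directly (with references to Majid and to Klimyk--Schm\"udgen for the explicit computation), and it dismisses associativity as straightforward. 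In your argument \eqref{gira-H} is not invoked explicitly, but it is hidden in the previously recalled fact that $\Psi^R$ is a morphism of $H$-comodules, so the two proofs rest on the same identity; yours buys a cleaner conceptual statement (the braided tensor product of algebra objects in the braided category $(\M^H,\ot,\Psi^R)$), while the paper's is more self-contained at the level of formulas. For the morphism statement the paper factorizes $\phi\ot\psi=(\phi\ot\id_F)\circ(\id_A\ot\psi)$ and checks each factor separately, whereas you verify the algebra-map property in one direct computation using equivariance of $\phi$ and $\psi$ --- this is exactly the naturality of $\Psi^R$, and both arguments are equally valid.
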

\begin{proof}
Associativity of the product in $A\otimes C$ is straighforward. The
coaction $\delta^{A \ot C}$ is also easily seen to be an algebra map because of
\eqref{gira-H}, (an explicit proof can be found in \cite{maj-braid}, or 
in \cite[Lem.31 \S 10.3]{KS}).
 The statement about morphisms follows by writing $\phi\otimes\psi = (\phi\otimes
\id_F)\circ  (\id_A\otimes \psi)$ and showing that $\id_A\otimes \phi$ and $\psi\otimes
\id_F$ are both algebra maps (this is due to $H$-equivariance of $\phi$ and $\psi$).
\end{proof}
The $H$-comodule algebra $(A \ot C, \tpr)$ is called the
{\bf{braided tensor product algebra}} of $A$ and $C$; we
denote it by $A \bot  C$, and write $a \bot c\in A \bot
 C$ for $a\in A$, $c\in C$. 
Similarly, we denote by
 $\phi\bot\psi:=\phi\otimes\psi: A\bot C\to E\bot F$ the
 $H$-equivariant algebra map resulting from the tensor
 product of the  $H$-equivariant algebra maps  $\phi: A\to E$ and $\psi: C\to F$.

\begin{prop}\label{prop:tpr}
Let $(H,\r)$ be a coquasitriangular bialgebra.  The category $\RA$ of
$H$-comodule algebras endowed with the above defined tensor product $\bot$ becomes a
 monoidal category, denoted $(\RA,\bot)$.
\end{prop}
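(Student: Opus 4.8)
The plan is to verify the three pieces of data of a monoidal category — the tensor bifunctor, the unit object, and the coherence isomorphisms — and to check the axioms, exploiting throughout that the associator is trivial by the conventions fixed in \S\ref{sec:bg-mat}. First I would establish that $\bot$ is a bifunctor $\RA\times\RA\to\RA$. On objects and on morphisms this is exactly the content of Proposition~\ref{prop:tpr0}: the braided tensor product $A\bot C$ is again an $H$-comodule algebra, and $\phi\bot\psi$ is again a morphism of $H$-comodule algebras. Functoriality — compatibility with composition, $(\phi'\bot\psi')\circ(\phi\bot\psi)=(\phi'\circ\phi)\bot(\psi'\circ\psi)$, and preservation of identities — is then immediate, since on underlying $\bbK$-modules $\bot$ is just the tensor product of linear maps, for which these identities are standard.

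Next I would take the unit object to be $\bbK$, regarded as an $H$-comodule algebra via the unit map $\eta_H$ (so that $\zero{\lambda}=\lambda$ and $\one{\lambda}=\1_H$). Because the $R$-form is normalized, $\ru{\1}{h}=\varepsilon(h)=\ru{h}{\1}$ as in \eqref{Rnormalized}, every braiding factor in \eqref{tpr} involving $\1_H$ collapses to a counit; hence the $\tpr$-product on $\bbK\bot A$ and on $A\bot\bbK$ reduces to the ordinary product of $A$, so that $\bbK\bot A=A=A\bot\bbK$ as $H$-comodule algebras and the left and right unitors can be taken to be identity maps. Their naturality is then automatic.

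The substance of the argument is the associativity constraint: I would show $(A\bot C)\bot D=A\bot(C\bot D)$ as $H$-comodule algebras, so that, the associator being trivial, the pentagon and triangle axioms hold automatically and the unitors are compatible. The underlying $\bbK$-modules and the tensor product coactions agree on the nose (the coaction is built from the coproduct and is strictly coassociative, with $\r$ not entering), so the real task is to check that the two iterated $\tpr$-products coincide. Expanding each bracketing via \eqref{tpr} produces a single accumulated $R$-factor in which one leg has been braided past the other two; using the multiplicativity properties \eqref{eq-iii} of $\r$, namely $\ru{hk}{l}=\ru{h}{\one{l}}\ru{k}{\two{l}}$ and $\ru{h}{kl}=\ru{\two{h}}{k}\ru{\one{h}}{l}$, together with coassociativity to match the Sweedler indices, I would rewrite both sides into a common normal form. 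Conceptually this is precisely the hexagon identity for the braiding $\Psi^R$ of \eqref{braiding}, equivalent to condition~(iii) of Definition~\ref{def:cqt}, with the Yang--Baxter equation \eqref{YB} guaranteeing the consistency of the two routes; it is the specialization to algebra objects of the general fact that $(\M^H,\ot,\Psi^R)$ is braided monoidal and that the braided tensor product of monoids in a braided monoidal category is associative. I expect the main obstacle to be the bookkeeping in this step — correctly tracking the coproduct splittings of the three middle legs and verifying that the $R$-factors produced by the two bracketings agree termwise — everything else following formally from Proposition~\ref{prop:tpr0} and the trivial-associator convention.
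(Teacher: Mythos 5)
Your proposal is correct and follows essentially the same route as the paper: bifunctoriality via Proposition~\ref{prop:tpr0}, unit object $\bbK$ with coaction $\eta_H$, and as the main step the verification that the $\bbK$-module identification $(A\ot C)\ot E\simeq A\ot (C\ot E)$ respects the iterated $\tpr$-products, using the properties \eqref{eq-iii} of the $R$-form. One minor remark: only the hexagon-type conditions \eqref{eq-iii} enter that computation; the Yang--Baxter equation \eqref{YB} you invoke is not actually needed for the associativity check.
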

\begin{proof}
Let $(A,\delta^A),
(C,\delta^C),  (E,\delta^E)\in \RA$. If we forget the algebra structure  the tensor product $\bot$
becomes the associative tensor product of $H$-comodules
of the monoidal category $({\M}^H,\otimes)$, where $(A\ot C)\ot E \simeq A\ot (C\ot E)$ (as $\bbK$-modules). 
We only need to show that this isomorphism is compatible with the algebra structure, so that it is an isomorphism in ${\A}^H$. 
The equality
$$
(( a \ot c) \ot e ) \tpr (( a' \ot c') \ot e' ) = ( a \ot ( c\ot e )) \tpr ( a' \ot ( c' \ot e' ) )
$$ 
follows from the explicit expression \eqref{tpr} for the product and the property \eqref{eq-iii} of the $R$-form.  The units in $(A\bot C)\bot E$ and in $A\bot (C\bot E)$ trivially coincide.
The unit object in $(\A^H,\bot)$
is $\bbK$, seen as an $H$-comodule algebra 
(since $\delta^\bbK =\eta_H$ is an algebra map).
\end{proof}

\begin{rem}
The braiding \eqref{braiding} of $\M^H$ defines a braiding 
\beq\label{braidingA}
\Psi^R_{A,C}: A \ot C  \longrightarrow C \ot A ~, \quad a\ot c \longmapsto 
\zero{c}\ot \zero{a} ~\ru{\one{a}}{\one{c}} 
\eeq 
for the monoidal subcategory $(\A^H,\bot)$ if and only if $R$ is cotriangular. Indeed the 
$H$-comodule isomorphisms \eqref{braidingA} are algebra maps if and only if $R$ is cotriangular. 
Hence requiring the monoidal category  $(\A^H,\bot)$ to be braided
with braidings \eqref{braidingA} is more specifically requiring it to
be a  symmetric monoidal category, that is $({\Psi^R_{A,C}})^{\!-1}=\Psi^R_{C,A}$.
\end{rem}

An important role in the following is played by the right $H$-comodule $\bh:=(H,\Ad)$, with the right adjoint coaction 
$\Ad: \bh \ra \bh \ot H$, $h \mapsto \two{h} \ot S(\one{h})\three{h}$
as defined in \S \ref{def:hg}.
The notation $\bh$ is used when considering $H$ as an $H$-comodule
rather than a Hopf algebra.
If $H$ is coquasitriangular, one can  endow 
$\bh$ with a product that makes $\underline H$ an $H$-comodule
algebra and a braided Hopf algebra (see e.g. \cite[\S 10.3.2]{KS}): 

\begin{prop}\label{prop:hpr} Let $(H,\r)$ be a coquasitriangular Hopf algebra.
The right $H$-comodule $\underbar{H}=(H,\Ad)$ becomes an $H$-comodule algebra when endowed with the product
\beq\label{hpr}
h \hpr k := \two{h}\two{k} 
\ru{S(\one{h})\three{h}}{S(\one{k})} 
\eeq
and unit $\eta: \bbK\ra \bh$ given, as linear map, by the unit
$\eta_H$ of $H$. 
\end{prop}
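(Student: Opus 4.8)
The plan is to verify that $(\bh,\hpr,\eta)$ meets the three requirements in the definition of a right $H$-comodule algebra: that $\hpr$ is associative with unit $\1_H$, that $(H,\Ad)$ is a right $H$-comodule (this is standard and already used, $\Ad$ being the adjoint coaction \eqref{adj}), and that the multiplication $m_{\bh}$ and the unit $\eta$ are morphisms of $H$-comodules. I would dispose of the easy items first. For the unit of $\hpr$, using $\Delta(\1_H)=\1_H\ot\1_H$ together with the normalization \eqref{Rnormalized} of $\r$ gives $\1_H\hpr k=\two{k}\,\ru{\1}{S(\one{k})}=\two{k}\,\varepsilon(\one{k})=k$, and likewise $h\hpr\1_H=\two{h}\,\varepsilon(S(\one{h})\three{h})=h$. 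That $\eta$ is a comodule map is immediate from $\Ad(\1_H)=\1_H\ot\1_H$.

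For the covariance of the multiplication I must establish
\[
\Ad(h\hpr k)=(\zero{h}\hpr \zero{k})\ot \one{h}\one{k}, \qquad \zero{h}\ot\one{h}:=\Ad(h)=\two{h}\ot S(\one{h})\three{h},
\]
where the domain $\bh\ot\bh$ carries the tensor product coaction \eqref{deltaVW}. The strategy is to expand the left-hand side by applying $\Ad$ to $\two{h}\two{k}$, recalling that, since $\Delta$ is multiplicative and $S$ anti-multiplicative, $\Ad(xy)=\two{x}\two{y}\ot S(\one{y})S(\one{x})\three{x}\three{y}$; one then compares with the right-hand side after reindexing the coproducts. The point is that $m_H\colon\bh\ot\bh\to\bh$ fails to be $\Ad$-covariant in general, and the discrepancy is exactly the $R$-form factor in \eqref{hpr}; matching the two expressions uses the quasi-commutativity relation \eqref{gira-H} together with $\Delta\circ S=(S\ot S)\circ\mathrm{flip}\circ\Delta$.

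The heart of the proof is associativity. I would expand both sides in Sweedler notation. Setting $u:=h\hpr k$ and computing $\Delta^2(u)$ one reads off the three legs $\one{u}=\two{h}\two{k}$, $\two{u}=\three{h}\three{k}$, $\three{u}=\four{h}\four{k}$ weighted by $\ru{S(\one{h})\five{h}}{S(\one{k})}$, whence
\[
(h\hpr k)\hpr l=\three{h}\three{k}\two{l}\,\ru{S(\two{k})S(\two{h})\four{h}\four{k}}{S(\one{l})}\,\ru{S(\one{h})\five{h}}{S(\one{k})},
\]
while the analogous expansion of the other side gives
\[
h\hpr(k\hpr l)=\two{h}\three{k}\three{l}\,\ru{S(\one{h})\three{h}}{S(\two{l})S(\two{k})}\,\ru{S(\one{k})\four{k}}{S(\one{l})}.
\]
To identify the two I would split every $R$-form of a product by means of \eqref{eq-iii}, convert each factor carrying an antipode through \eqref{prop-R} (so that $\ru{S(x)}{y}=\ruin{x}{y}$ and $\ruin{x}{S(y)}=\ru{x}{y}$), and then reassemble the resulting scalars using the Yang--Baxter equation \eqref{YB}, with \eqref{Rnormalized} cancelling the spurious legs. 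I expect this reshuffling to be the main obstacle: the bookkeeping of the many coproduct legs is heavy, and it is the repeated use of \eqref{YB}, rather than the mere multiplicativity of $\r$, that forces the two sides to coincide.

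Finally, a cleaner conceptual route is available: once one checks that $m_{\bh}$ is a morphism in the braided monoidal category $(\M^H,\Psi^R)$ with braiding \eqref{braiding}, associativity of $\hpr$ follows from associativity of $m_H$ together with the hexagon coherence of $\Psi^R$, as in the transmutation construction of \cite[\S 10.3.2]{KS}. I would nonetheless present the direct verification above, since it keeps the argument self-contained within the identities already collected in this section.
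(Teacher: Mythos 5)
First, a point of comparison: the paper does not actually prove Proposition \ref{prop:hpr} at all; it introduces the product \eqref{hpr} with a pointer to the transmutation construction, citing \cite[\S 10.3.2]{KS}. So the ``cleaner conceptual route'' you relegate to your final paragraph \emph{is} the paper's argument, while the direct Sweedler verification you choose to present is a genuinely different, more self-contained route. Your setup for that route is correct as far as it goes: the unit computations via \eqref{Rnormalized}, the covariance of $\eta$, the identification of the two nontrivial claims (that $m_{\bh}$ is a morphism of $H$-comodules and that $\hpr$ is associative), and both of your displayed expansions of $(h\hpr k)\hpr l$ and $h\hpr(k\hpr l)$ check out.

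However, the two claims that carry the entire content of the proposition are announced rather than proven, and your recipe for closing the associativity identity has a concrete snag. After splitting the $R$-factors with \eqref{eq-iii} and simplifying with $\ru{S(x)}{S(y)}=\ru{x}{y}$, each side reduces to the product of the middle legs of $h,k,l$ times six scalar factors, and three factors on each side are of the form $\ru{x}{S(y)}$ with \emph{un-antipoded first argument} (for instance a leg of $h$ paired against an antipoded leg of $l$ on one side, and against an antipoded leg of $k$ on the other). These are precisely the factors that \eqref{prop-R} does \emph{not} convert: \eqref{prop-R} handles $\ru{S(x)}{y}$ and $\ruin{x}{S(y)}$, and trading $\ru{x}{S(y)}$ costs $\ruin{x}{S^{2}(y)}$, bringing $S^{2}$ into play. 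So the step ``convert each factor carrying an antipode through \eqref{prop-R}'' does not go through as stated; in the actual computation one must instead regroup terms so as to apply the hexagon relations in reverse, and the order of these moves (together with \eqref{gira-H}, which in this paper underlies \eqref{YB} itself) is exactly where the proof lives --- you yourself flag this reassembly as ``the main obstacle'' and leave it undone. The same applies, less severely, to the covariance of $m_{\bh}$: the appeal to \eqref{gira-H} and to anti-(co)multiplicativity of $S$ is the right idea, but it is only a plan. As submitted, then, this is a correct strategy with its hardest steps unexecuted and one of its stated mechanisms in need of repair; to make it a proof you should either carry the computation through in full, or simply do what the paper does and invoke the transmutation theorem of \cite[\S 10.3.2]{KS} (see also \cite{Majid}), whose statement your last paragraph already contains.
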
 
Vice versa, the product in the Hopf algebra $H$ is recovered from that in $\bh$ as
\beq\label{hpr-inv}
hk= \two{h} \hpr \two{k} ~\ru{S(\one{h})\three{h}}{\one{k}}.
\eeq
Given an $H$-comodule $V\in\M^H$, we denote
 by $\bdelta^V: V\to V\ot \bh$ the coaction $\delta^V: V\to V\otimes H$
  thought as a linear  map from $V$ to $V\ot \bh$. It is easy to
  show that $\bdelta^V$ is an $H$-comodule map, that
  is, the commutativity
  of the diagram
\beq\label{dHcommap}
\xymatrix{
\ar[d]_-{\delta^V} V \ar[rr]^-{\bdelta^{V}} &&\ar[d]^-{\delta^{V \otimes \bh}} V \otimes \bh\\
V\ot H \ar[rr]^-{{~~~~~}{\bdelta^V\ot \,\id}{~~~~~}} &&\;V\otimes \bh\ot H \; .
}\eeq
Furthermore, given an  $H$-comodule algebra $A\in \A^H$, with $(H,R)$ coquasitriangular,
we denote by $\bdelta^A: A\to A\bot \bh$  the $H$-comodule map 
$\bdelta^A: A\to A\ot \bh$.

\begin{prop}\label{prop:deltaund} 
Let  $(H,R)$ be a coquasitriangular Hopf algebra, $A$ an algebra in
$\M^H$ with coaction $\delta^A: A\to A\ot H$. The map
$\bdelta^A: A\to A\bot \bh$ is an algebra map if and only if $\delta^A: A\to A\ot H$ is an algebra map.
\end{prop}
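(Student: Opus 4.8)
The plan is to show that, on the common underlying $\bbK$-module $A\ot \bh = A\ot H$, the two multiplicativity conditions are literally the \emph{same} equation, so that the equivalence drops out once a single product is computed. First I would dispose of the cheap points. As $\bbK$-linear maps $\bdelta^A$ and $\delta^A$ coincide---only the name of the target changes---so $\bdelta^A(aa')$ and $\delta^A(aa')$ are the same element, and the two unit conditions agree as well, since the unit of $A\bot \bh$ is $\1_A\bot\1_{\bh}=\1_A\ot\1_H$ and $\bdelta^A(\1_A)=\delta^A(\1_A)$. Hence everything reduces to comparing $\bdelta^A(a)\tpr\bdelta^A(a')$ with the product $\delta^A(a)\,\delta^A(a')$ taken in the ordinary tensor product algebra $A\ot H$.

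The heart of the matter is the identity
\beq\label{eq:plan-key}
\bdelta^A(a)\tpr\bdelta^A(a')=\zero{a}\zero{a'}\ot\one{a}\one{a'}=\delta^A(a)\,\delta^A(a')~,
\eeq
which I claim holds for all $a,a'\in A$ \emph{without} assuming $\delta^A$ multiplicative. To obtain it I would expand the left-hand side with the braided product \eqref{tpr}, in which the second tensor factor is $\bh$ carrying the adjoint coaction \eqref{adj} and the product $\hpr$ of \eqref{hpr}. Writing $\bdelta^A(a)=\zero{a}\bot\one{a}$ and $\bdelta^A(a')=\zero{a'}\bot\one{a'}$, the $\Ad$-coaction replaces the $\bh$-leg $\one{a}$ by $\two{a}\ot S(\one{a})\three{a}$ inside the braiding, and after merging the iterated coactions on $a$ and on $a'$ by comodule coassociativity $(\delta^A\ot\id)\circ\delta^A=(\id\ot\Delta)\circ\delta^A$ one arrives at
\beq\label{eq:plan-mid}
\bdelta^A(a)\tpr\bdelta^A(a')=\zero{a}\zero{a'}\bot\big(\two{a}\hpr\two{a'}\big)\,\ru{S(\one{a})\three{a}}{\one{a'}}~.
\eeq
The decisive step is to read the $\bh$-component as an honest product in $H$: this is exactly relation \eqref{hpr-inv}, namely $hk=\two{h}\hpr\two{k}\,\ru{S(\one{h})\three{h}}{\one{k}}$, applied with $h=\one{a}$ and $k=\one{a'}$, which collapses the bracketed expression in \eqref{eq:plan-mid} to $\one{a}\one{a'}$ and proves \eqref{eq:plan-key}.

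With \eqref{eq:plan-key} established the proposition is immediate: the equation $\bdelta^A(aa')=\bdelta^A(a)\tpr\bdelta^A(a')$ holds for all $a,a'$ if and only if $\delta^A(aa')=\delta^A(a)\,\delta^A(a')$ holds for all $a,a'$, because corresponding sides are equal elements of $A\ot H$; combined with the coincidence of the unit conditions noted above, this yields the stated equivalence. The only genuine work, and the place where care is required, is the index bookkeeping in passing from \eqref{tpr} to \eqref{eq:plan-mid}: one must shepherd the adjoint-coaction legs and the $R$-form factor so that they reassemble into precisely the right-hand pattern of \eqref{hpr-inv}. I would therefore keep the antipode-laden factor $S(\one{a})\three{a}$ together with the $\hpr$-product and refrain from using the normalization \eqref{Rnormalized} or antipode identities for $R$ until \eqref{hpr-inv} has been invoked, since any premature simplification would fragment the single clean cancellation on which the argument rests.
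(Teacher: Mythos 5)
Your proposal is correct and takes essentially the same approach as the paper: both proofs establish the unconditional identity $\bdelta^A(a)\tpr\bdelta^A(a')=\zero{a}\zero{a'}\ot\one{a}\one{a'}=\delta^A(a)\,\delta^A(a')$ in $A\ot H$ by expanding the braided product \eqref{tpr} with the adjoint coaction and collapsing the $\bh$-leg via \eqref{hpr-inv}. The equivalence, including the unit conditions, then follows exactly as you say, since $\bdelta^A$ and $\delta^A$ coincide as linear maps.
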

\begin{proof}
Let $\tpr$ be the product in the braided tensor 
product algebra $A\bot\bh$ given in \eqref{tpr}. 
Then, for all $a,c\in A$, 
$\bdelta^A(a)\tpr 
\bdelta^A(c)=\bdelta^A(ac)\Leftrightarrow
\delta^A(a)\delta^A(c)=\delta^A(ac)$; indeed, 
\begin{align*}
\bdelta^A(a) \tpr \bdelta^A(c) &= (\zero{a} \ot \one{a})
\tpr  (\zero{c} \ot \one{c})= \zero{a}  \zero{c} \ot \two{a} \hpr
                                  \two{c}
                                  \ru{S(\one{a})\three{a}}{\one{c}}
\\
&=\zero{a} \zero{c} \ot \one{a}\one{c} =    (\zero{a} \ot \one{a})
  (\zero{c} \ot \one{c})\\
& = \delta^A(a) \delta^A(c)~.
 \end{align*}
Furthermore, unitality of $\bdelta^A$ is equivalent to unitality of
$\delta^A$ since the two maps are the same as linear maps.
\end{proof}

\begin{defi} \label{def:bbH}
Let  $(H,R)$ be a coquasitriangular Hopf algebra. An $H$-comodule algebra 
$(L, m_L, \eta_L, \delta^L)$ and $H$-comodule coalgebra $(L, \Delta_L,\varepsilon_L, \delta^L)$ is called 
a \textbf{braided bialgebra associated with $H$} if it is a bialgebra
in the braided monoidal category $(\M^H,\otimes, \Psi^R)$ of $H$-comodules. That is,  
$\varepsilon_L: L\to \bbK$ is an algebra map, $\eta_L:\bbK\to L$ a coalgebra map and moreover $\Delta_L$ is an algebra map 
with respect to the product $m_L$ in $L$ and 
the product $m_{L \bot L} = (m_L \otimes m_L)  \circ (\id_L\otimes \Psi^R_{L,L} \otimes \id_L)$ in $L\bot L$ (as given in \eqref{tpr}), that is  
\beq\label{cop-braid}
\Delta_L\circ m_L  = m_{L \bot L} \circ (\Delta_L\otimes\Delta_L) \, .
\eeq
The braided biagebra  $L$ is a braided Hopf algebra if there is a map
$S_L : L\to L $, called antipode or braided antipode, that satisfies the antipode property
 (of being the convolution inverse of the identity
$\id: L\to L$):
\beq 
m_L\circ (\id_L\otimes S_L)\circ \Delta_L=
\eta_L\circ \varepsilon_L =
m_L\circ (S_L\otimes\id_L)\circ \Delta_L ~, \label{bantipode}
\eeq
and that in addition is an $H$-comodule map. 
\end{defi}
For later use we recall that the antipode $S_L: L\to L$ of a braided Hopf algebra $L$ is a  braided anti-algebra map
and a braided anti-coalgebra map
\beq\label{SmmRSS}
S_L\circ m_L=m_L\circ  \Psi^R_{L,L}\circ (S_L\ot S_L)~~,~~~\Delta_L\circ
S_L=(S_L\ot S_L)\circ \Psi^R_{L,L}\circ \Delta_L~~.
\eeq

\begin{ex}\label{ex:bh}
\textit{The braided Hopf algebra $\underline{H}$ of a coquasitriangular Hopf
  algebra $(H,R)$.} 
Recall that for any Hopf algebra $H$, the data  $(H, \Delta,
\varepsilon, \Ad)$ is an $H$-comodule coalgebra and that for $H$
coquasitriangular  $(\bh, \hpr,
\, \Ad)$ is an $H$-comodule algebra. These two structures define the braided Hopf algebra 
$$
(\bh, \hpr, \eta, \Delta, \varepsilon, \bs, \Ad). 
$$  
Here, as $H$-comodule maps, both $\eta:\bh\ra \bbK$ and $\Delta:
\bh \ra \bh \bot \bh$ are the same as the counit and coproduct in $H$,
with now $\Delta$ considered as an algebra map for the product $\hpr$ in $\bh$ and the $\tpr$-product $m_{\bh \bot \bh}$ in $\bh \bot \bh$.  
The antipode $\bs:=S_{\bh}:
\bh \ra \bh$ can be shown to be given, for all $h\in H$, by
\beq\label{bs} 
\bs(h):= S(\two{h}) \ru{S^2(\three{h})S(\one{h})}{\four{h}} ~.
\eeq 
\qed\end{ex} 
\begin{lem}
The braided Hopf algebra $\underline H$ is {\bf{braided commutative},}
that is, for all $h,k \in \bh$, its product satisfies
\beq  \label{H-braid-comm} 
\two{k} \hpr \two{h} \,\ru{S(\one{k})\three{k}}{\one{h}} =
\one{h} \hpr \two{k} \,\ruin{\two{h}}{S(\one{k})\three{k}}~,
\eeq
this equation being equivalent to \eqref{gira-H}.
\end{lem}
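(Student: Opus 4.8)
The plan is to read both sides of \eqref{H-braid-comm} through the dictionary relating the braided product $\hpr$ of $\bh$ and the original product of $H$, and to show that the identity is nothing but the coquasitriangularity relation \eqref{gira-H} transcribed.

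First I would observe that the left-hand side is already the original product of $H$. Indeed, the recovery formula \eqref{hpr-inv}, read with the roles of $h$ and $k$ interchanged, gives exactly
$$ \two{k}\hpr\two{h}\,\ru{S(\one{k})\three{k}}{\one{h}} = kh~. $$
Here \eqref{hpr-inv} is a purely formal consequence of the definition \eqref{hpr} of $\hpr$ together with the convolution-invertibility of $R$ and the bicharacter property \eqref{eq-iii}; in particular it does not itself invoke \eqref{gira-H}. Thus \eqref{H-braid-comm} is the assertion that $kh$ equals its right-hand side.

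Next I would expand the right-hand side. Substituting the definition \eqref{hpr} into $\one{h}\hpr\two{k}$ and keeping the extra factor, the right-hand side becomes, after one use of coassociativity, a product $\two{h}\two{k}$ in $H$ multiplied by a scalar built from two $R$-forms whose arguments carry the antipode, of the schematic shape $\ru{S(\cdot)(\cdot)}{S(\cdot)}\,\ruin{(\cdot)}{S(\cdot)(\cdot)}$. The task is then to collapse this scalar to $\ru{\one{h}}{\one{k}}\,\ruin{\three{h}}{\three{k}}$. I would carry this out using only the bicharacter identities \eqref{eq-iii}, the antipode--$R$ relations \eqref{prop-R} (namely $\ru{S(x)}{y}=\ruin{x}{y}$ and $\ru{S(x)}{S(y)}=\ru{x}{y}$), the normalization \eqref{Rnormalized}, and the defining relation $\r*\bar\r=\varepsilon\otimes\varepsilon$: splitting the arguments of each $R$-form, resolving the antipode entries, and telescoping the surplus Sweedler legs of $h$ and of $k$ against one another until two of the three \emph{extra} legs on each side disappear through a counit. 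The outcome is that the right-hand side of \eqref{H-braid-comm} equals $\ru{\one{h}}{\one{k}}\,\two{h}\two{k}\,\ruin{\three{h}}{\three{k}}$.

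Putting the two computations together, \eqref{H-braid-comm} reads $kh=\ru{\one{h}}{\one{k}}\,\two{h}\two{k}\,\ruin{\three{h}}{\three{k}}$, which is precisely \eqref{gira-H}. Since both reductions rest only on properties of $R$ that hold independently of \eqref{gira-H} (convolution-invertibility and the bicharacter and antipode relations), the two members of \eqref{H-braid-comm} are \emph{unconditionally} equal to the two members of \eqref{gira-H}; hence \eqref{H-braid-comm} holds if and only if \eqref{gira-H} does, which is the claimed equivalence. I expect the only genuine difficulty to lie in the middle step: keeping track of which coproduct legs of $h$ and $k$ feed each $R$-form once the antipodes are resolved, and performing the telescoping cancellations correctly. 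A safe way to organise this is to allocate at the outset a fourfold coproduct on each of $h$ and $k$ (as forced by applying \eqref{hpr} inside $\one{h}\hpr\two{k}$) and to simplify the $R$-part one factor at a time, rather than attempting the reduction in a single stroke.
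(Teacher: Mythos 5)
Your proposal is correct and takes essentially the same route as the paper: the paper substitutes \eqref{hpr-inv} into \eqref{gira-H} and simplifies using the bicharacter identities \eqref{eq-iii}, the antipode relations \eqref{prop-R} and $R\ast\bar R=\varepsilon\otimes\varepsilon$, producing a chain of unconditional equivalences ending at \eqref{H-braid-comm} --- which is precisely your two-sided reduction read in the opposite direction. The only caveat is that in your middle ``collapse'' step the three crossed $R$-factors left after fully splitting and resolving antipodes do not telescope directly; one must also use $R\ast\bar R=\varepsilon\otimes\varepsilon$ in the inserting direction (equivalently, apply \eqref{hpr-inv} once more to the inner product), but since you list exactly this relation among your tools, the argument goes through.
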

\begin{proof}
By substituting \eqref{hpr-inv} in \eqref{gira-H} 
and using the basic properties of  the $R$-form $\r$, one obtains:
\begin{align*}
kh\,\,= \,\,&\ru{\one{h}}{\one{k}} {\two{h}}{\two{k}} \ruin{\three{h}}{\three{k}} 
\\
\iff&
\two{k} \hpr \two{h} ~\ru{S(\one{k})\three{k}}{\one{h}}
\\ & = \ru{\one{h}}{\one{k}} 
\three{h} \hpr \three{k} ~\ru{S(\two{h})\four{h}}{\two{k}}
\ruin{\five{h}}{\five{k}} 
\\
\iff&
\two{k} \hpr \two{h} ~\ru{S(\one{k})\three{k}}{\one{h}}
= \ru{\two{h}}{\one{k}} 
\one{h} \hpr \two{k} ~
\ruin{\three{h}}{\three{k}} 
\\
\iff&
\two{k} \hpr \two{h} ~\ru{S(\one{k})\three{k}}{\one{h}}
= 
\one{h} \hpr \two{k} ~
\ruin{\two{h}}{S(\one{k})\three{k}}
\end{align*}
where in the last passage we used the analogous properties in \eqref{eq-iii} for $\bar{R}$.
Thus $\bh$ is braided commutative.
\end{proof}

\begin{lem}
The following conditions are equivalent to \eqref{H-braid-comm} 
\begin{align}
& h \hpr k =
\three{k} \hpr \two{h}\,
\ru{S(\two{k})\four{k}}{\one{h}}\ru{\three{h}}{S(\one{k})\five{k}} \label{H-braid-commQ} ~,\\
& h \hpr k =
\three{k} \hpr \three{h}
\,\ru{S(\two{h})\four{h}}{S(\one{k})\five{k}}\qu{\one{h}}{S(\two{k})\four{k}} ~ , \label{H-braid-commQQ}
\end{align}
where $Q$ is the convolution product $Q=R_{21} \ast R$.
\end{lem}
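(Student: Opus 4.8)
The guiding idea is that the three displayed identities are one and the same statement, namely that the product $\hpr$ is \emph{braided commutative}, $m_{\bh}=m_{\bh}\circ\Psi^R_{\bh,\bh}$. Since the previous lemma already identifies \eqref{H-braid-comm} with the coquasitriangularity relation \eqref{gira-H}, it suffices to exhibit \eqref{H-braid-commQ} and \eqref{H-braid-commQQ} as equivalent rewritings of this braided commutativity. I would begin by making the braiding on $\bh$ explicit: substituting the adjoint coaction \eqref{adj} into \eqref{braiding} gives
\[
\Psi^R_{\bh,\bh}(h\ot k)=\two{k}\ot\two{h}\;\ru{S(\one{h})\three{h}}{S(\one{k})\three{k}},
\]
so that $m_{\bh}\circ\Psi^R_{\bh,\bh}(h\ot k)=\two{k}\hpr\two{h}\,\ru{S(\one{h})\three{h}}{S(\one{k})\three{k}}$. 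The plan is to start from this compact form, equivalent to \eqref{H-braid-comm} by the previous lemma, and transform it into the two target identities by manipulating only the $R$-forms, keeping each step reversible so as to obtain genuine equivalences.

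To reach \eqref{H-braid-commQ} I would split the single $R$-form $\ru{S(\one{h})\three{h}}{S(\one{k})\three{k}}$ into the two factors $\ru{S(\two{k})\four{k}}{\one{h}}$ and $\ru{\three{h}}{S(\one{k})\five{k}}$ that appear there. This uses multiplicativity of $R$ in each slot, \eqref{eq-iii}, applied to the composite elements $S(\cdot)\cdot$, the rule $\Delta\circ S=(S\ot S)\circ\mathrm{flip}\circ\Delta$ for the coproduct of an antipode, and the antipode--$R$ relations \eqref{prop-R} to carry $S$ across $R$. The interchange of the roles of $h$ and $k$ inside the two $R$-forms (present in \eqref{H-braid-commQ} but not in the compact form) cannot come from the axioms alone and is produced by one application of the braided commutativity relation itself; normalization \eqref{Rnormalized} together with the counit axioms then collapses the superfluous legs.

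To reach \eqref{H-braid-commQQ} the new feature is the monodromy $Q=R_{21}\ast R$, which is naturally the double braiding $\Psi^R_{\bh,\bh}\circ\Psi^R_{\bh,\bh}$ read off on coactions. Starting from \eqref{H-braid-commQ}, I would re-express one occurrence of the product $\hpr$ through its definition \eqref{hpr} (or its inverse \eqref{hpr-inv}) so as to expose a compensating $R$-form $\ru{S(\two{h})\four{h}}{S(\one{k})\five{k}}$, and then combine the factor $\ru{S(\two{k})\four{k}}{\one{h}}$ with the $R$-form in the opposite order into the convolution product $R_{21}\ast R=Q$, lining up the legs with \eqref{eq-iii} and the Yang--Baxter relation \eqref{YB}. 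Re-folding what remains into a single $\hpr$ between $\three{k}$ and $\three{h}$ yields \eqref{H-braid-commQQ}, and reading the computation backwards gives the converse implications.

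The main obstacle is not conceptual but the bookkeeping: tracking up to five Sweedler legs on $k$ and four on $h$, correctly expanding the coproducts of the composite elements $S(\one{h})\three{h}$ and $S(\one{k})\three{k}$, and invoking \eqref{eq-iii}, \eqref{prop-R}, and \eqref{YB} in exactly the order that regroups the $R$-forms into the factors written in \eqref{H-braid-commQ} and into the monodromy $Q$ in \eqref{H-braid-commQQ}. The only point requiring care is to ensure that each rewriting is an equivalence, so that the three conditions are genuinely equivalent and not merely implied in one direction.
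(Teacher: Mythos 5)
Your guiding idea --- that \eqref{H-braid-comm}, \eqref{H-braid-commQ} and \eqref{H-braid-commQQ} are all the single statement $m_{\bh}=m_{\bh}\circ\Psi^R_{\bh,\bh}$ --- is incorrect, and this sinks the plan. The ``compact form''
\[
h \hpr k \,=\, \two{k}\hpr\two{h}\;\ru{S(\one{h})\three{h}}{S(\one{k})\three{k}}
\]
is \emph{not} equivalent to \eqref{H-braid-comm}, and the previous lemma does not say it is: that lemma identifies \eqref{H-braid-comm} with the coquasitriangularity axiom \eqref{gira-H}, which holds for \emph{every} coquasitriangular Hopf algebra. Your compact form is instead the quasi-commutativity condition \eqref{qcomm2} for the comodule algebra $(\bh,\hpr,\Ad)$; as Example \ref{mainexHcot} makes explicit, it is what \eqref{H-braid-commQQ} collapses to when $Q=\varepsilon\ot\varepsilon$, i.e.\ under the cotriangularity hypothesis, and in general it is strictly stronger --- the monodromy factor $\qu{\one{h}}{S(\two{k})\four{k}}$ in \eqref{H-braid-commQQ} exists precisely to measure its failure. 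This is not a repairable bookkeeping issue but a logical obstruction: since \eqref{H-braid-comm}, \eqref{H-braid-commQ} and \eqref{H-braid-commQQ} hold unconditionally (being equivalent to the axiom \eqref{gira-H}), a chain of \emph{reversible} manipulations connecting them to your compact form would prove that $\bh$ is quasi-commutative for every coquasitriangular $H$, which is false and would render the cotriangularity hypothesis of Example \ref{mainexHcot} vacuous. Concretely, the step that must fail is the ``splitting'' of $\ru{S(\one{h})\three{h}}{S(\one{k})\three{k}}$ into the two factors of \eqref{H-braid-commQ}: the slot interchange of $h$ and $k$ cannot be produced by \eqref{eq-iii}, \eqref{prop-R} or \eqref{YB}, as you note, and if the ``braided commutativity'' you invoke to produce it is the compact form itself, you are assuming an unproven (and generally false) premise.

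The local manipulations you sketch are otherwise close to what is needed, but they must be anchored at \eqref{H-braid-comm}, not at the compact form. This is what the paper does: for \eqref{H-braid-comm}$\,\Rightarrow$\eqref{H-braid-commQ} one writes $h\hpr k=\one{h}\hpr\two{k}\,\varepsilon(\two{h})\,\varepsilon(S(\one{k})\three{k})$, expands the counits as the convolution $\bar{R}*R$ evaluated on $\two{h}\ot S(\one{k})\three{k}$, and applies \eqref{H-braid-comm} once to achieve the $h\leftrightarrow k$ interchange; the converse is a direct substitution of \eqref{H-braid-commQ} into the right-hand side of \eqref{H-braid-comm}. For \eqref{H-braid-commQ}$\,\Leftrightarrow$\eqref{H-braid-commQQ} one simply expands $Q=R_{21}*R$ and recombines the resulting $R$-forms using \eqref{eq-iii}; neither the Yang--Baxter equation \eqref{YB} nor a re-expression of $\hpr$ via \eqref{hpr-inv} is needed. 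If you replace your compact form by \eqref{H-braid-comm} as the pivot of the argument and drop the identification with $m_{\bh}=m_{\bh}\circ\Psi^R_{\bh,\bh}$, your outline essentially becomes the paper's proof.
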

\begin{proof}
The implication  \eqref{H-braid-commQ} $\,\Rightarrow$  \eqref{H-braid-comm} is
proven by substituting in the right hand side of \eqref{H-braid-comm} the expression for 
$\one{h}\hpr\two{k}$ given by relation \eqref{H-braid-commQ}. 

For the converse  implication \eqref{H-braid-comm}  $\,\Rightarrow$  \eqref{H-braid-commQ}  
we compute
\begin{align*}
h\hpr k\, & = \one{h} \hpr \two{k} \, \varepsilon(\two{h}) \varepsilon(S(\one{k})\three{k}) \\
& = \one{h} \hpr \three{k} \, \ruin{\two{h}}{S(\two{k})\four{k}} \ru{\three{h}}{S(\one{k})\five{k}} \\
& =\three{k}\hpr \two{h}\,
    \,\ru{S(\two{k})\four{k}}{\one{h}}\ru{\three{h}}{S(\one{k})\five{k}}~,
\end{align*}
where in the last equality we used  \eqref{H-braid-comm}. 

Equivalence of \eqref{H-braid-commQQ} and
\eqref{H-braid-commQ} is shown by using the explicit convolution product $Q$:
\begin{align*}
h \hpr k 
& = \three{k} \hpr \three{h} \,\ru{S(\two{h})\four{h}}{S(\one{k})\five{k}}\qu{\one{h}}{S(\two{k})\four{k}} \\
& = \four{k} \hpr \four{h} \,\ru{S(\three{h})\five{h}}{S(\one{k})\seven{k}}
\ru{S(\three{k})\five{k}}{\one{h}} \ru{\two{h}}{S(\two{k})\six{k}} \\
& = \three{k} \hpr \four{h} \,\ru{\two{h} S(\three{h})\five{h}}{S(\one{k})\five{k}}
\ru{S(\two{k})\four{k}}{\one{h}} \\
& = \three{k} \hpr \two{h}\,
\ru{S(\two{k})\four{k}}{\one{h}}\ru{\three{h}}{S(\one{k})\five{k}}, 
\end{align*}
where we used the basic property \eqref{eq-iii} of the $R$-form in the third equality.
\end{proof}
 
We next introduce the notion of quasi-commutative algebra $A\in \A^H$ and provide a few examples.
\begin{defi}\label{def:qcomm}
Let $(H, \r)$ be a coquasitriangular Hopf algebra. A right $H$-comodule algebra $A
\in \A^H$ is {\bf{quasi-commutative}} (for the coquasitriangular structure $\r$ of $H$) if  
\beq\label{qcomm1}
 m_A = m_A \circ {({\Psi}^R_{A,A})}^{\!-1} \, , \qquad a c = \zero{c} \zero{a} ~ \ruin{\one{c}}{\one{a}}
\eeq
or equivalently 
\beq\label{qcomm2}
m_A =m_A \circ \Psi^R_{A,A} \, , \, \qquad a c= \zero{c} \zero{a}~ \ru{\one{a}}{\one{c}}
\eeq
for all $a,c \in A$. 
We denote by  $\qcr$  the full subcategory of $\A^H$ of
quasi-commutative comodule algebras (for the coquasitriangular structure $\r$), where morphisms are  $H$-comodule algebra morphisms. 
\end{defi}
\noindent
The first expression \eqref{qcomm1} implies the second \eqref{qcomm2}:
$$
a c = \zero{a} \zero{c} \varepsilon(\one{a} \one{c}) = \zero{a} \zero{c} \ruin{\one{a}}{\one{c}} \ru{\two{a}}{\two{c}} =
\zero{c} \zero{a} \ru{\one{a}}{\one{c}}~.
$$
Similarly the second expression implies the first one. For future use (see Theorem \ref{prop:canam})
we also prove a third equivalent expression:
\beq\label{qcomm3}
\zero{c}\zero{a} \ot \two{c} \ru{S(\one{c})\three{c}}{\one{a}}
= \zero{a}\zero{c} \ot \one{c} \ru{\two{c}}{\one{a}} \, .
\eeq
Indeed, \eqref{qcomm1} implies \eqref{qcomm3}:
\begin{align*}
\zero{a}\zero{c} \ot \one{c} \ru{\two{c}}{\one{a}} & = \zero{c} \zero{a} \ruin{\one{c}}{\one{a}} \ot \two{c} \ru{\three{c}}{\two{a}} \\
& = \zero{c} \zero{a} \ru { S(\one{c})}{\one{a}} \ot \two{c} \ru{\three{c}}{\two{a}} \\
& = \zero{c}\zero{a} \ot \two{c} \ru{S(\one{c})\three{c}}{\one{a}}. 
\end{align*}
On the other hand, $\id \ot \varepsilon$ applied to \eqref{qcomm3} and
the normalization $\ru{1}{h } = \varepsilon(h)$ give \eqref{qcomm2}. 

The quasi-commutativity property of $A\in \A^H$ can be equivalently
characterized as the compatibility of the
multiplication in $A$ with that in the braided tensor product $A\bot A$:
\begin{prop} \label{qclemma}
Let $(H, \r)$ be a coquasitriangular Hopf algebra. An $H$-comodule
algebra  $(A, m_A, \delta^A)$ is quasi-commutative if and only if the multiplication
$m_A:A\bot A\to A,$ $ a\bot c\mapsto ac$ is an algebra map. Thus $m_A:
A\bot A\to A$ is an $H$-comodule algebra map if  $A\in \qcr$.
\end{prop}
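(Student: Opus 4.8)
The plan is to unwind the statement ``$m_A:A\bot A\to A$ is an algebra map'' using the explicit braided product \eqref{tpr}, thereby reducing the equivalence to a single Sweedler identity that I can match term by term against the quasi-commutativity relation \eqref{qcomm2}. Since $m_A(a\ot c)=ac$, applying $m_A$ to \eqref{tpr} gives
$$
m_A\big((a\ot c)\tpr(a'\ot c')\big)=a\,\zero{a'}\,\zero{c}\,c'\;\ru{\one{c}}{\one{a'}}~,
$$
whereas $m_A(a\ot c)\,m_A(a'\ot c')=(ac)(a'c')$. Hence $m_A$ is an algebra map precisely when the identity $a\,\zero{a'}\,\zero{c}\,c'\,\ru{\one{c}}{\one{a'}}=aca'c'$ holds for all $a,a',c,c'\in A$, and the whole proposition becomes the task of showing this identity is equivalent to quasi-commutativity.

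For the ``if'' direction I would start from $(ac)(a'c')=a\,(ca')\,c'$ and use quasi-commutativity in the form $ca'=\zero{a'}\,\zero{c}\,\ru{\one{c}}{\one{a'}}$, obtained from \eqref{qcomm2} by relabelling, the scalar $\ru{\one{c}}{\one{a'}}\in\bbK$ being pulled out of the product. Substituting yields exactly $a\,\zero{a'}\,\zero{c}\,c'\,\ru{\one{c}}{\one{a'}}$, i.e. the required identity, so $m_A$ is an algebra map. For the converse I would specialise the identity $a\,\zero{a'}\,\zero{c}\,c'\,\ru{\one{c}}{\one{a'}}=aca'c'$ at $a=\1_A$ and $c'=\1_A$; using $\delta^A(\1_A)=\1_A\ot\1_H$ this collapses to $\zero{a'}\,\zero{c}\,\ru{\one{c}}{\one{a'}}=ca'$, which is precisely \eqref{qcomm2} after the relabelling $c\mapsto a$, $a'\mapsto c$. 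This establishes quasi-commutativity of $A$.

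Finally, for the last assertion, I would note that $m_A:A\bot A\to A$ is automatically a morphism of $H$-comodules: the coaction on $A\bot A$ is the tensor product coaction $a\ot c\mapsto\zero{a}\ot\zero{c}\ot\one{a}\one{c}$, and $H$-equivariance of $m_A$ with respect to it is exactly the statement that $\delta^A$ is an algebra map, which holds since $A\in\A^H$. Combining this with the algebra-map property just proved shows that, for $A\in\qcr$, the map $m_A$ is a morphism in $\A^H$. The argument is entirely computational and presents no serious obstacle; the only point requiring a little care is the bookkeeping of the coaction in the specialisation step, ensuring that setting $a=\1_A$ and $c'=\1_A$ reproduces \eqref{qcomm2} exactly rather than a weaker relation.
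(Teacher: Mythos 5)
Your proof is correct and takes essentially the same approach as the paper: both compute $m_A\big((a\bot c)\tpr(a'\bot c')\big)=a\zero{a'}\zero{c}c'\,\ru{\one{c}}{\one{a'}}$ and compare with $(ac)(a'c')$, concluding that equality of the two expressions is equivalent to quasi-commutativity, with $H$-equivariance of $m_A$ being automatic from the comodule-algebra axiom. The only difference is that you spell out the converse explicitly via the specialisation $a=\1_A$, $c'=\1_A$, a detail the paper leaves implicit.
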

\begin{proof} 
On the one hand
$$
m_A\big((a\bot c)\tpr (a'\bot c')\big)=m_A\big(a\zero{a'}\bot
\zero{c}c' \r({\one{c}}\ot {\one{a'}})\big)
=a\zero{a'}\zero{c}c' \r({\one{c}}\ot
  {\one{a'}})~;$$
on the other hand
$$\big(m_A(a\bot c)\big)\big(m_A(a'\bot  c')\big)=aca'c'~.$$
Hence the two expressions coincide if and only if $A$ is quasi-commutative.
Moreover, by definition of $H$-comodule algebra, the multiplication map is an $H$-comodule map.
\end{proof}
\begin{rem}\label{remBinZ}
The subalgebra  $A^{coH}\subseteq A$ of a quasi-commutative
$H$-comodule algebra $A$ is contained in the centre $Z(A)$ of $A$. 
This follows from 
\eqref{qcomm1} and the
normalization property
\eqref{Rnormalized} of the $R$-form.
\end{rem} 
\begin{ex}\label{trivexH}
Every commutative  algebra $A \in \A^H$, with commutative Hopf
  algebra $H$ and trivial coquasitriangular structure $R=\varepsilon \ot \varepsilon$, is
quasi-commutative. Indeed quasi-commutativity with  $R=\varepsilon
\ot \varepsilon$ is equivalent to commutativity.
\qed\end{ex}
\begin{ex}\label{ex:usualm_H}
Let $(H,R)$ be a coquasitriangular Hopf algebra, the $H$-comodule algebra
$(H, \cdot, \Delta)$ is quasi-commutative if and only if 
$R=\varepsilon\otimes \varepsilon$ is the trivial $R$-form, and hence $H$ is commutative.
The proof is straighforward, comparing the cotriangularity and
quasi-commutativity conditions \eqref{gira-H} and \eqref{qcomm1}
we obtain, for all $h,k\in H$,
${h}{k}=\ru{\one{h}}{\one{k}}\two{h}\two{k}$. Applying the counit
$\varepsilon$ gives $R=\varepsilon\otimes\varepsilon$, and hence 
commutativity of $H$.
\qed\end{ex}

Many examples of quasi-commutative  algebras arise as
twist deformations (see \S \ref{sec:twists-hopf})
of commutative  algebras $A \in \A^H$. More in general, twist deformations of quasi-commutative algebras are quasi-commutative algebras:
\begin{ex}\label{ex:qc}
Let $A\in \qcr$ and  $\cot: H\otimes H\to \bbK$ a
2-cocycle on $H$. Consider the Hopf algebra
$H_\cot$ with coquasitriangular structure $R_\cot=\cot_{21}\ast R\ast \bar{\cot}$ as in Example \ref{ex:rug}. Let
$A_\cot \in \A^{\hg}$ be the twisted $H_\cot$-comodule algebra of $A$:
this is the $\bbK$-module $A$
with new product 
$
a \mtco a':= \zero{a} \zero{a'} \,\coin{\one{a}}{\one{a'}}$
and unchanged coaction $a\mapsto\zero{a}\ot \one{a}$
(see \S \ref{sec:twists-hopf}).
Then, 
\begin{align*}
a \mtco a' &= \zero{a'}   \zero{a}  \, \ru{\one{a}}{\one{a'}} \coin{\two{a}}{\two{a'}}
\\
&=\zero{a'}  \mtco \zero{a}  \, \co{\one{a'}}{\one{a}} \ru{\two{a}}{\two{a'}}\coin{\three{a}}{\three{a'}}
\\
&= \zero{a'} \mtco \zero{a}~ \r_\cot({\one{a}}\ot {\one{a'}} )
\end{align*}
showing that $A_\cot\in \qcrc$.
\qed\end{ex}
\begin{ex}
Let $H$ be commutative with trivial $R$-form $R=\varepsilon\ot \varepsilon$, so that the 
$H$-comodule algebra $(H,\cdot,\Delta)$ is quasi-commutative
(cf. Example \ref{trivexH}).  
The twist deformation of $(H,\cdot,\Delta)\in
\A_{qc}^{(H,\varepsilon\ot\varepsilon)}$, as in Example \ref{ex:qc} just above, gives the quasi-commutative $H_\gamma$-comodule algebra
$(H,\mtco,\Delta)$ $\in \qcrc$, with $R_\cot=\cot_{21}\ast\bar{\cot}$.
\qed\end{ex}

\begin{ex}\label{mainexHcot}
A main  example of quasi-commutative comodule algebra is the $H$-comodule algebra
$(\bh, \hpr, \Ad)$ associated with a cotriangular Hopf algebra
$(H,R)$. Indeed cotriangularity reads $Q=\varepsilon\otimes\varepsilon$
and then  the braided commutativity property \eqref{H-braid-commQQ} 
reduces to the quasi-commutativity property 
\[
h\hpr k=\two{k}\hpr\two{h}\ru{S(\one{h})\three{h}}{S(\one{k})\three{k}}~. 
\]
\qed\end{ex}
 
Quasi-commutativity of $\underline H$ does not imply cotriangularity of $H$  as this example shows:
\begin{ex} \label{ex:CZ} Let $H=\mathbb{C}\mathbb{Z}=\mathbb{C}[g,g^{-1}]$ be the group
  Hopf-algebra of the group $\mathbb{Z}$ considered in Example \ref{ex:CZzero}, with $R$-form $R_q(g^n,g^m)=q^{-nm}$
 for a complex number $q\neq 1$. It is 
coquasitriangular but not  cotriangular. Since the adjoint coaction is trivial it is immediate
that $(\bh=\mathbb{C}\mathbb{Z},\cdot, \Ad)$ is
  quasi-commutative with respect to $R_q$.  More generally, if $R$ is
  a coquasitriangular structure on a commutative and cocommutative
  algebra $H$, then $(\bh,\hpr, \Ad) =(H,\cdot,\Ad)$ is
  quasi-commutative since the adjoint coaction is trivial.
\qed\end{ex}

Another example of quasi-commutative algebra $A\in \A^H$
with coquasitriangular and {\sl not} cotriangular Hopf algebra $H$ is the following one:
\begin{ex} 
The FRT bialgebra $\O(M_q(2))$ is generated for $j,k=1,2$, by elements $u_{jk}$, satisfying
$\R^{ji}_{kl} u_{km}u_{ln}= u_{ik} u_{jl} \R^{lk}_{mn}$, with the only non zero components of the matrix $\R$
\[
\R^{11}_{11} = \R^{22}_{22} =q \quad , \quad \R^{12}_{12} =\R^{21}_{21} = 1 \quad , \quad  \R^{21}_{12} = q-q^{-1} \; 
\]
for $q\in \IC $, $q \neq 0$.  
Let $H=\O(GL_q(2))$ be the Hopf algebra of coordinate functions of the
quantum group $GL_q(2)$ which is obtained by extending  $\O(M_q(2))$
by a central element $D^{-1}$, defined to be the inverse of the quantum determinant $D:=u_{11}u_{22}-q u_{12}u_{21}$.
The Hopf algebra $H$ is coquasitriangular  with (not cotriangular) universal $R$-form
\beq\label{above}
\ru{u_{ij}}{u_{kl}}= q^{-1} \R^{ik}_{jl} \; , \quad   \ru{D^{-1}}{u_{ij}}=\ru{u_{ij}}{D^{-1}}= q\, \delta_{ij} \; , \quad
\eeq
see e.g. \cite[\S 10.1]{KS}. The convolution inverse is $\ruin{u_{ij}}{u_{kl}}= q (\R^{-1})^{ik}_{jl}$.
Let $A=\O(\IC_q^2)$ be the algebra of the quantum plane, that is, the algebra generated by two elements $x_1,x_2$ with commutation relations $x_1 x_2=q~x_2x_1$. It is well known that $A$ is a $\O(GL_q(2))$-comodule algebra  with coaction $\delta(x_i)= x_j \ot u_{ji}$;
 it is easily verified that 
$A$ is quasi-commutative with respect to the coquasitriangular 
structure $R$  defined in \eqref{above}:
$$x_i x_j = x_l x_p \ru{u_{pi}}{u_{lj}}
= 
q^{-1} \R^{pl}_{ij} x_l x_p$$
for each $i,j=1,2{}$.
Note that the Hopf algebra $\O(GL_q(2))$ admits the one parameter family
  of coquasitriangular structures
$
R_\lambda(u_{ij} \ot u_{kl})= \lambda \R^{ik}_{jl}
$,
with nonvanishing $\lambda \in \IC$. For $\lambda$ a square root of
$q^{-1}$, $R_\lambda$ is also a coquasitriangular structure on the
quotient Hopf algebra $\O(SL_q(2))$. Nevertheless, the comodule algebra
$A=\O(\IC_q^2)$  is not quasi-commutative with respect to it.
\qed\end{ex}

\subsection{Hopf--Galois extensions for coquasitriangular Hopf
  algebras}~\\[.5em]
As mentioned in \S \ref{sec:hge}, for a generic 
noncommutative algebra extension, in contrast with the commutative
case, the canonical map $\can=(m_A \ot \id)\circ (\id \ot \delta)$ is
just a morphism of relative Hopf modules. The domain 
$A \ot_B A$ of $\can$ itself does not inherit an algebra structure from $A \ot
A$ and  the multiplication $m_A: A\ot_B A\to A$ is not an algebra map. 
In this subsection we find when 
the canonical map of an Hopf--Galois extension with coquasitriangular
Hopf algebra is an algebra map, and a morphism in the category $(\A^H,\bot)$.

\begin{lem}\label{lem:B}
Let $(H,\r)$ be a coquasitriangular Hopf algebra, and let $A\in \A^H$ with subalgebra of coinvariants $B=A^{coH}
\subseteq A$. The $\tpr$-product
\eqref{tpr} on $A \ot A$ descends to a well-defined product on $A \ot_B A$ if and
only if $B$ is in the centre of $A$.
\end{lem}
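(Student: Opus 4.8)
The plan is to realise the balanced tensor product as the quotient $A\ot_B A=(A\ot A)/N$, where $N$ is the sub-$\bbK$-module generated by the elements $a\ot ba'-ab\ot a'$ with $a,a'\in A$, $b\in B$. A bilinear map on $A\ot A$ descends to a well-defined bilinear map on the quotient exactly when $N\tpr(A\ot A)\subseteq N$ and $(A\ot A)\tpr N\subseteq N$, so the whole statement reduces to analysing these two inclusions. The only property of $B$ that I would use throughout is coinvariance, $\delta^A(b)=b\ot\1_H$, which yields $\delta^A(ba')=b\,\zero{a'}\ot\one{a'}$ and $\delta^A(a'b)=\zero{a'}b\ot\one{a'}$ for all $b\in B$.

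For the ``if'' direction I would assume $B\subseteq Z(A)$ and evaluate a generator of $N\tpr(A\ot A)$, namely $(a\ot ba'-ab\ot a')\tpr(a''\ot c'')$. Using the product \eqref{tpr} together with the coaction of $ba'$ above, this equals $a\,\zero{a''}\ot b\,\zero{a'}c''\,\ru{\one{a'}}{\one{a''}}-ab\,\zero{a''}\ot\zero{a'}c''\,\ru{\one{a'}}{\one{a''}}$. In $A\ot_B A$ the factor $b$ in the first summand sits at the left end of the right leg and may be moved across the $B$-balancing onto the left leg; the difference then collapses to $a(\zero{a''}b-b\,\zero{a''})\ot_B\zero{a'}c''\,\ru{\one{a'}}{\one{a''}}$, which vanishes by centrality of $B$. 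An entirely analogous computation of $(a\ot c)\tpr(p\ot bq-pb\ot q)$, now using $\delta^A(pb)=\zero{p}b\ot\one{p}$ and again moving $b$ across the central subalgebra, gives $(A\ot A)\tpr N\subseteq N$. Hence $\tpr$ descends.

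For the ``only if'' direction I would assume $\tpr$ descends, so in particular $N\tpr(A\ot A)\subseteq N$. Reading the same computation modulo $N$ but \emph{without} assuming centrality (the move of $b$ across the balancing is always legitimate), the generator above is congruent to $a(\zero{a''}b-b\,\zero{a''})\ot_B\zero{a'}c''\,\ru{\one{a'}}{\one{a''}}$. Specialising $a=a'=\1_A$ and $c''=\1_A$ and invoking the normalisation $\ru{\1_H}{\one{a''}}=\varepsilon(\one{a''})$ from \eqref{Rnormalized} collapses this to $(a''b-ba'')\ot_B\1_A$, which must therefore be zero in $A\ot_B A$. The final step is to use injectivity of the map $A\to A\ot_B A$, $z\mapsto z\ot_B\1_A$, which holds because the multiplication $m_A\colon A\ot_B A\to A$ is well defined on the balanced tensor product and post-composition recovers $\id_A$. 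This forces $a''b=ba''$ for all $a''\in A$, $b\in B$, that is $B\subseteq Z(A)$.

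The only genuinely delicate point is the bookkeeping in the quotient: one has to ensure that $b$ occupies the correct (leftmost) position on a tensor leg before moving it across the $B$-balancing. This is precisely where centrality is consumed in the forward direction and where its failure is detected in the converse, so the difficulty is conceptual placement rather than computation. The injectivity of $z\mapsto z\ot_B\1_A$ used in the last step is elementary but essential, and I would state it explicitly rather than leave it implicit.
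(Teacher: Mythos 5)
Your proof is correct and follows essentially the same route as the paper's: both identify the kernel $N$ (the paper's $J$) of $A\ot A\to A\ot_B A$ and show it is a two-sided ideal for $\tpr$ if and only if $B$ is central, using coinvariance of $B$ to compute the coactions, the normalisation $\ru{\1}{h}=\varepsilon(h)$, and in the converse direction the specialisation to elements with unit entries to extract $[a,b]\ot_B\1=0$. The only difference is presentational: you make explicit the injectivity of $z\mapsto z\ot_B\1_A$ (via the well-defined multiplication map), a step the paper leaves implicit in its ``hence $[a,b]=0$''.
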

\begin{proof} 
The balanced tensor product $A\ot_B A$ is by definition the quotient
of $A\ot A\in  {_A{\mathcal  M}_A}^{H}$ by the
$A$-sub-bimodule and $H$-subcomodule $J=\{a( b\ot 1-1\ot b) a' \, ,
a,a' \in A, ~b \in B \}$.  We prove the lemma by showing that $J$ is an ideal in $A\bot A$ if and only if the
subalgebra of coinvariants $B$ is central in $A$. 
If $J$ is
an ideal in $A\bot A$ then, for all $a\in A, b\in B$,  $(b\ot 1-1\ot
b)\tpr(a\ot 1)\in J$; since $B=A^{coH}$, and thus  $(b\ot 1-1\ot
b)\tpr(a\ot 1)= - [a,b] \ot 1 + a(b \ot 1 -1 \ot b)$, this implies $[a,b]\otimes 1=0$ and hence $[a,b]=0$.
 Vice versa if $B$ is central in $A$ then for all $a,a' \in A, b \in B$
$$a(b\ot 1-1\ot b)a'=(a\ot a') (b\ot 1-1\ot b)=(a\ot a') \tpr (b\ot 1-1\ot b)$$
where the last equality holds because $B=A^{coH}$. In a similar way, $a(b\ot 1-1\ot b)a'= (b\ot 1-1\ot b) \tpr (a\ot a')$. This proves that
$J$ is the two-sided ideal in $A\bot A$ generated
by $b\ot 1-1\ot b, ~b \in B$. 
\end{proof}

Since Remark \ref{remBinZ} shows that the subalgebra of coinvariants
of a quasi-commutative algebra $A\in \qcr$
is in the centre of $A$, for such an algebra $A$ we have that $A \ot_B A$ inherits an algebra
structure from $A\bot A$;  we denote it  by $A \bot_B A$. 
We
correspondingly denote by $\delta^{A \,\bot_BA}: A \bot_B A\to A
\bot_B A\bot H$ the $H$-coaction $\delta^{A \,\ot_BA}:  A \ot_B A\to A
\ot_B A\ot H$.

\begin{prop}
Let $(H,\r)$ be a coquasitriangular Hopf algebra, and let $A\in \qcr$
with subalgebra of coinvariants $B=A^{coH}$. Then
$(A\bot_B A,\tpr,\delta^{A \,\bot_BA})$ is an $H$-comodule
algebra. 
\end{prop}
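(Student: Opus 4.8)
The plan is to realise $A\bot_B A$ as a quotient of the $H$-comodule algebra $A\bot A$ by a two-sided ideal that is simultaneously an $H$-subcomodule, and then to descend the entire comodule algebra structure along the quotient map, rather than to check the axioms by an explicit computation with the $R$-form.

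First I would recall that, by Proposition \ref{prop:tpr0} together with the definition of $H$-comodule algebra, $(A\bot A,\tpr,\delta^{A\ot A})$ is an $H$-comodule algebra; in particular its coaction $\delta^{A\ot A}$ is a unital algebra map into the tensor product algebra $(A\bot A)\ot H$. Since $A\in\qcr$, Remark \ref{remBinZ} gives $B=A^{coH}\subseteq Z(A)$, so Lemma \ref{lem:B} shows that the submodule $J$ generated by the elements $b\ot 1-1\ot b$, $b\in B$, which defines the balanced tensor product $A\ot_B A$, is in fact a two-sided ideal of $A\bot A$. Thus $A\bot_B A=(A\bot A)/J$ is an associative unital algebra whose product is the descended $\tpr$-product, and associativity and unitality come for free from the quotient. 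Moreover $J$ is an $H$-subcomodule precisely because $B=A^{coH}$, which is exactly the reason the coaction $\delta^{A\ot A}$ descends to the well-defined coaction $\delta^{A\,\bot_B A}$ on $A\ot_B A$, as already observed for the underlying comodule.

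It then remains to check that the descended coaction is an algebra map. Here I would invoke the single quotient map $\pi:A\bot A\to A\bot_B A$, which by the previous paragraph is at once a surjective unital algebra map and an $H$-comodule map, so that
\[
(\pi\ot\id)\circ\delta^{A\ot A}=\delta^{A\,\bot_B A}\circ\pi .
\]
Since $\pi$ is a unital algebra map, so is $\pi\ot\id_H$ for the tensor product algebra structures; combined with the multiplicativity of $\delta^{A\ot A}$, a short diagram chase over preimages $\tilde x,\tilde y\in A\bot A$ of given $x,y\in A\bot_B A$ yields $\delta^{A\,\bot_B A}(x\tpr y)=\delta^{A\,\bot_B A}(x)\,\delta^{A\,\bot_B A}(y)$, while unitality is immediate from $\pi(1\ot 1)=1\ot_B 1$. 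Surjectivity of $\pi$ then forces $\delta^{A\,\bot_B A}$ to be a unital algebra map, which is the assertion.

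The only genuinely delicate point is conceptual rather than computational: one must ensure that the descent of the product (Lemma \ref{lem:B}) and the descent of the coaction (coinvariance $B=A^{coH}$) take place along one and the same quotient map $\pi$, so that $\pi$ is simultaneously an algebra and a comodule map and the diagram chase applies. Once this is in place no new identity for $\r$ is needed. For completeness one could instead verify $\delta^{A\,\bot_B A}(x\tpr y)=\delta^{A\,\bot_B A}(x)\,\delta^{A\,\bot_B A}(y)$ directly from the explicit product \eqref{tpr} and property \eqref{gira-H} of the $R$-form, but the descent argument is cleaner and renders associativity and unitality automatic.
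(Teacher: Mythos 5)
Your proposal is correct and follows essentially the same route as the paper: the paper also observes that $(A\bot A,\tpr,\delta^{A\,\bot A})$ is an $H$-comodule algebra by the monoidal structure of $(\A^H,\bot)$, that $J$ is simultaneously a two-sided ideal (via centrality of $B$, i.e.\ Remark \ref{remBinZ} and Lemma \ref{lem:B}) and an $H$-subcomodule, and that the comodule algebra structure therefore descends to the quotient $A\bot_B A$. The only difference is that the paper compresses the final descent argument into one sentence, whereas you spell out the diagram chase along the quotient map $\pi$; this is a matter of exposition, not of substance.
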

\begin{proof}
The triple $(A\bot A,\tpr,\delta^{A \,\bot A})$ is an $H$-comodule
algebra because $(\A^H,\bot)$ is a monoidal category for $H$
coquasitriangular (cf. Proposition \ref{prop:tpr}). The balanced
tensor product $A\bot_B A$  is the quotient of $A\bot A$ via the ideal
and $H$-subcomodule $J$ generated by $b\ot 1-1\ot b, ~b \in B$. 
The $H$-comodule
algebra structure on the quotient $A\bot_B A$ is therefore induced from that of $A\bot A$.
\end{proof}
The results on $H$-comodule algebras and morphisms 
  established so far  are profitably applied to the study of the
  canonical map. Recalling the map $\bdelta^A: A\to A\bot \bh$
  associated with an $H$-comodule algebra $A$ (cf. Proposition
  \ref{prop:deltaund}), we have:
\begin{thm}\label{prop:canam}
Let $(H,\r)$ be a coquasitriangular Hopf algebra and $A \in \qcr$ a
quasi-commutative $H$-comodule algebra. Let $B=A^{coH}$ be the corresponding subalgebra of coinvariants. 
Then the canonical map 
\begin{align}\label{can}  \can = (m \bot \id) \circ (\id \bot_B \bdelta^A ) : A \bot_B A \longrightarrow A \bot \bh~  , 
 \quad  a' \bot_B a
  \longmapsto
a' \, \zero{a} \bot \one{a} \nn \end{align} 
is a composition of  (well-defined) $H$-comodule algebra maps and thus a morphism in $\A^H$.
\end{thm}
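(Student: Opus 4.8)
The plan is to read $\can$ literally as the composite displayed in the statement, $\can=(m\bot\id)\circ(\id\bot_B\bdelta^A)$, and to verify that each of the two constituent maps is a well-defined morphism in $\RA$; since morphisms in $\RA$ compose, this at once yields $\can\in\Hom_{\RA}(A\bot_B A,\,A\bot\bh)$. The two maps to analyse are $\id\bot_B\bdelta^A:A\bot_B A\to(A\bot_B A)\bot\bh$ and $m\bot\id:(A\bot_B A)\bot\bh\to A\bot\bh$; throughout I use that both $A\bot_B A$ and $\bh$ are $H$-comodule algebras, and the trivial associator of $(\RA,\bot)$ (Proposition \ref{prop:tpr}) to bracket the threefold braided tensor product freely.

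First I would treat $\bdelta^A:A\to A\bot\bh$ on its own. By Proposition \ref{prop:deltaund} it is an algebra map exactly because $\delta^A$ is one, which holds as $A\in\RA$, and it is an $H$-comodule map by the commutativity of diagram \eqref{dHcommap}; hence $\bdelta^A$ is a morphism in $\RA$. Proposition \ref{prop:tpr0} applied to the pair $(\id_A,\bdelta^A)$ then shows $\id_A\bot\bdelta^A:A\bot A\to A\bot A\bot\bh$ is a morphism in $\RA$, and it remains to descend it along $A\bot A\twoheadrightarrow A\bot_B A$. For $b\in B=A^{coH}$ one has $\delta^A(b)=b\ot\1_H$, so $\bdelta^A(ba')=b\,\zero{a'}\bot\one{a'}$; combined with the centrality of $B$ (Remark \ref{remBinZ}), which allows a factor of $B$ to pass across $\bot_B$, this shows the algebra map $\id_A\bot\bdelta^A$ sends the $B$-balancing relations into the $B$-balancing relations of the target and therefore descends to the well-defined morphism $\id\bot_B\bdelta^A$.

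Next I would treat $m\bot\id$. Here $m=m_A:A\bot_B A\to A$ is an algebra map by Proposition \ref{qclemma}, whose hypothesis is precisely the quasi-commutativity of $A$; it is well defined on the balanced tensor product because $B$ is central (Lemma \ref{lem:B} and Remark \ref{remBinZ}), and it is an $H$-comodule map by definition of comodule algebra, so $m$ is a morphism in $\RA$. A second application of Proposition \ref{prop:tpr0}, now to the pair $(m,\id_\bh)$, gives that $m\bot\id:(A\bot_B A)\bot\bh\to A\bot\bh$ is a morphism in $\RA$.

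Composing, $\can=(m\bot\id)\circ(\id\bot_B\bdelta^A)$ is a morphism in $\RA$, and tracking a generic element gives $a'\bot_B a\mapsto a'\bot_B(\zero{a}\bot\one{a})\mapsto a'\,\zero{a}\bot\one{a}$, matching the stated formula. I expect the main obstacle to be bookkeeping rather than conceptual: keeping the balanced tensor product $\bot_B$ compatible with the braided tensor product $\bot$, i.e. checking that each constituent map really descends to, and is defined on, the correct $B$-balanced braided tensor products. The two facts that make this work are the coinvariance of $B$ (so that $\bdelta^A$ is trivial on $B$) and the quasi-commutativity of $A$; in a direct elementwise check it is the third equivalent form \eqref{qcomm3} that reconciles the adjoint-coaction braiding carried by the $\bh$-factor of the target with the product of $A$.
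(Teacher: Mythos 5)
Your proposal is correct and follows essentially the same route as the paper's proof: the same decomposition $\can=(m\bot\id)\circ(\id\bot_B\bdelta^A)$, with $\bdelta^A$ handled via Proposition \ref{prop:deltaund}, the braided tensor products of morphisms via Proposition \ref{prop:tpr0}, the multiplication via Proposition \ref{qclemma}, and the descent to the $B$-balanced quotients via centrality of $B$ (Remark \ref{remBinZ}) and the triviality of $\delta^A$ on $B$. Your elaboration of why $\id\bot\bdelta^A$ preserves the balancing relations is just an unpacking of the paper's appeal to $B$-linearity of $\bdelta^A$, so there is nothing substantively different to compare.
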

\begin{proof} 
The map $(\id \bot \bdelta^A ) : A\bot A\to A\bot A\bot \bh$ is an
$H$-comodule algebra map because tensor product of $H$-comodule
algebra maps (cf. Proposition \ref{prop:deltaund} and Proposition
\ref{prop:tpr0} or \ref{prop:tpr}). The quotient $A\bot_B A$ is well
defined because $B$ is central in $A\in \qcr$ (cf. Remark \ref{remBinZ}).
The induced map on the quotient $(\id \bot_B \bdelta^A ) : A\bot_B A\to A\bot_B A\bot \bh$
is well-defined because of $B$-linearity of $\bdelta^A$.

From Proposition \ref{qclemma} we know that $m_A: A\bot A\to A$ is an
$H$-comodule algebra map when  $A$ is quasi-commutative. It induces a well-defined
$H$-comodule algebra map on the quotient $m: A\bot_B A\to A$. Then $m\bot \id:
A\bot_B A\bot \bh\to A\bot \bh$ is an $H$-comodule algebra map because 
tensor product of comodule algebra maps.
\end{proof}
As a corollary of the above proposition, when $\chi$ is invertible, the translation map
$$\tau =\chi^{-1}{|_{_{1 \bot \underline{H}}}}: \underline{H} \longrightarrow A \bot_B A $$
is an algebra map as well, $\tau (h \hpr k)=\tau (h)\tpr \tau (k)$, and hence an $H$-comodule algebra map.  

Let us record some additional properties of the translation map. Being $\tau={\chi^{-1}}{|_{_{1 \ot {\bh}}}}$, 
one has $ \tuno{h}  \tdue{h}= \varepsilon(h) 1_A$, for any $h \in \bh$. In addition, 
by combining properties \eqref{p1} and \eqref{p4} one also has
\beq\label{p14}
\one{\tuno{h}} \ot \zero{\tuno{h}}  \ot_B \zero{\tdue{h}} \ot \one{\tdue{h}} = S(\one{h})  \ot \tuno{\two{h}} \ot_B \tdue{\two{h}} \ot 
\three{h} \; ,
\eeq
for any $h \in \bh$.

For the particular case of an Hopf--Galois extension $A^{coH} \subseteq A$ with $H$ coquasitriangular  and   $A \in \qcr$, using the property \eqref{p14} and $ \tuno{h}  \tdue{h}= \varepsilon(h) 1_A$, the quasi-commutativity of $A$ leads to  
 \beq\label{tau21H}
 \tdue{h} \tuno{h} =  \ruin{S(\one{h})}{\two{h}} 1_A = \ru{\two{h}}{S(\one{h})}1_A \; .
\eeq
For later use in Proposition \ref{prop:Finv},  we prove the following key additional properties of the
  translation map. The first one  \eqref{tauS} just concerns $\tau$  as a linear map, the second one \eqref{tauS2} uses that $\tau$ is an algebra map.
\begin{lem}
Let $(A,\delta^A) \in \qcr$ be a quasi-commutative $H$-comodule algebra, with the extension $A^{coH} \subseteq A$ a Hopf-Galois one.
The translation map satisfies the identity
\beq\label{tauS}
 \tau \circ \bs = \Psi^R_{A,A} \circ \tau ~.
\eeq
Moreover,
\beq\label{tauS2}
\tpr \circ ((\Psi^R_{A,A} \circ \tau) \bot  \tau) \circ \Delta_{\underline{H}}= \eta_{A \bot_B A} \circ \varepsilon_{\underline{H}}
\eeq
that is, for each $h \in \underline{H}$
\beq\label{tauSexp2}
\zero{\tdue{\one{h}}}
\zero{\tuno{ \two{h} } } \bot_B 
\zero{\tuno{\one{h}}} \tdue{\two{h} }
\ru{\one{\tuno{\one{h}}}}{\one{\tdue{\one{h}}}\one{\tuno{ \two{h}}}} 
= \varepsilon(h) \, 1_A \bot_B 1_A \, .
\eeq
\end{lem}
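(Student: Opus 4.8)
The plan is to establish \eqref{tauS} first, as an identity of linear maps $\bh\to A\bot_B A$, and then deduce \eqref{tauS2} as a short consequence that exploits the fact that $\tau$ is an $H$-comodule algebra map (the corollary following Theorem~\ref{prop:canam}). Since the extension is Hopf--Galois, the canonical map $\can$ is bijective, hence injective; because $\can(\tau(k))=1_A\bot k$ for all $k\in\bh$, it suffices for \eqref{tauS} to check that $\can\bigl(\Psi^R_{A,A}(\tau(h))\bigr)=1_A\bot\bs(h)$ for all $h\in\bh$. (Here I would first note that $\Psi^R_{A,A}$ descends to $A\bot_B A$ since $B$ is central in $A\in\qcr$, so that $\Psi^R_{A,A}\circ\tau$ is well defined.)

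For the computation, I would rewrite the left-hand side of \eqref{tauS} using the explicit braiding \eqref{braidingA}, obtaining $\Psi^R_{A,A}(\tau(h))=\zero{\tdue h}\bot_B\zero{\tuno h}\,\ru{\one{\tuno h}}{\one{\tdue h}}$, and then insert the translation-map identity \eqref{p14} together with $\ru{S(\cdot)}{\cdot}=\ruin{\cdot}{\cdot}$ from \eqref{prop-R} to reach the tractable form $\Psi^R_{A,A}(\tau(h))=\tdue{\two h}\bot_B\tuno{\two h}\,\ruin{\one h}{\three h}$. Applying $\can$ gives $\tdue{\two h}\,\zero{\tuno{\two h}}\bot\one{\tuno{\two h}}\,\ruin{\one h}{\three h}$, and here I would reorder the $A$-product $\tdue{\two h}\,\zero{\tuno{\two h}}$ by quasi-commutativity \eqref{qcomm1}, bringing the two $A$-factors into the configuration $\zero{\tuno k}\bot_B\tdue k\bot\one{\tuno k}$ of \eqref{p1} (with $k=\two h$). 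Identity \eqref{p1} then trades the $A$-coaction on $\tuno{}$ for the antipode on a coproduct leg, and $\tuno k\,\tdue k=\varepsilon(k)\,1_A$ collapses the balanced factor, leaving $1_A$ tensored with a scalar built from $R$-forms and antipodes. The main obstacle is precisely this bookkeeping: one must track the several Sweedler legs of $h$ and verify, using the $R$-form properties \eqref{eq-iii} and \eqref{prop-R}, that the accumulated coefficient assembles exactly into $S(\two h)\,\ru{S^2(\three h)S(\one h)}{\four h}=\bs(h)$, as in \eqref{bs}.

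Granting \eqref{tauS}, the identity \eqref{tauS2} follows cleanly. Using \eqref{tauS} I replace $\Psi^R_{A,A}\circ\tau$ by $\tau\circ\bs$, so that the left-hand side of \eqref{tauS2} becomes $\tpr\circ(\tau\bot\tau)\circ(\bs\ot\id)\circ\Delta_{\bh}$, where I have used the functoriality $(\tau\circ\bs)\bot\tau=(\tau\bot\tau)\circ(\bs\ot\id)$. Since $\tau$ is an $H$-comodule algebra map, one has $\tpr\circ(\tau\bot\tau)=\tau\circ\hpr$, whence the expression equals $\tau\circ\hpr\circ(\bs\ot\id)\circ\Delta$. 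The braided antipode axiom \eqref{bantipode} in its $(\bs\ot\id)$ form gives $\hpr\circ(\bs\ot\id)\circ\Delta=\eta_{\bh}\circ\varepsilon$, and finally $\tau\circ\eta_{\bh}=\eta_{A\bot_B A}$ because $\tau(1_H)=\can^{-1}(1_A\bot 1_H)=1_A\bot_B 1_A$. Hence the left-hand side equals $\eta_{A\bot_B A}\circ\varepsilon$, which is \eqref{tauS2}; its component version \eqref{tauSexp2} is then just the Sweedler expansion of this composition.
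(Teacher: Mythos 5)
Your proposal is correct and takes essentially the same route as the paper: identity \eqref{tauS} is verified by applying the injective canonical map and checking both sides give $1_A\bot\bs(h)$ via the translation-map identities, quasi-commutativity and the $R$-form properties \eqref{prop-R}, \eqref{eq-iii}, and identity \eqref{tauS2} is deduced exactly as in the paper from \eqref{tauS}, the algebra-map property of $\tau$, and the braided antipode axiom \eqref{bantipode}. The only difference is cosmetic resequencing in the first part: you apply \eqref{p14} before $\can$ and then use \eqref{qcomm1}, \eqref{p1} and $\tuno{h}\tdue{h}=\varepsilon(h)1_A$ inline, whereas the paper applies $\can$ first and invokes the pre-packaged consequence \eqref{tau21H} of those same facts.
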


\begin{proof} 
Identity \eqref{tauS} holds, indeed for each $h \in \underline{H}$ one has
\begin{align*} 
\can(\Psi^R_{A,A} \circ \tau(h)) &= \can \big( 
\zero{\tdue{h}}\bot_B \zero{\tuno{h}}\big) ~\ru{\one{\tuno{h}}}{\one{\tdue{h}}}
\\
&= \zero{\tdue{h}}\zero{\tuno{h}} \bot \one{\tuno{h}}~\ru{\two{\tuno{h}}}{\one{\tdue{h}}}
\\
&= \tdue{\three{h}}\tuno{\three{h}} \bot S(\two{h}) ~\ru{S(\one{h})}{\four{h}}
\\
&= 1_A \bot S(\two{h}) ~\ruin{S(\three{h})}{\four{h}}~\ru{S(\one{h})}{\five{h}}
\\
&= 1_A \bot S(\two{h}) ~\ru{S^2(\three{h})}{\four{h}}~\ru{S(\one{h})}{\five{h}}
\\
&=
1_A \bot S(\two{h}) ~\ru{S^2(\three{h})S(\one{h})}{\four{h}}
\\
&=
1_A \bot \bs(h)
\end{align*}
where for the third equality we used  \eqref{p14}
and for the fourth one property \eqref{tau21H} of the translation map.
Then, the identity \eqref{tauS} directly implies the
  second part of the lemma; indeed
\begin{align*}
\tpr \circ ((\Psi^R_{A,A} \circ \tau) \bot  \tau) \circ \Delta_{\underline{H}} 
\,=\,
\tpr \circ (( \tau \circ \bs) \bot  \tau) \circ \Delta_{\underline{H}} 
=
\tpr \circ ( \tau  \bot  \tau) \circ ( \bs \bot  \id) \circ \Delta_{\underline{H}} \\[.3em]
\,=\,\tau \circ \hpr \circ ( \bs \bot  \id) \circ \Delta_{\underline{H}} 
=
\tau \circ \eta_{\underline{H}}\circ \varepsilon_{\underline{H}}
= \eta_{A \bot_B A} \circ \varepsilon_{\underline{H}} \, ,
\end{align*}
using in the second line that $\tau$ is an algebra map and
the antipode property \eqref{bantipode}.
\end{proof}
\begin{rem}\label{rem:class} 
If $H$ is commutative with trivial $\r$-form, $\r = \varepsilon
\ot \varepsilon$, then $A\in \qcr$ is commutative and property \eqref{tauS} simply reads 
$\tau (S(h))= \mbox{flip} \circ \tau(h)\in A\ot_B A$. 
In particular, let $B=A^{co H}\subseteq A$ be the
Hopf--Galois extension of the principal $G$-bundle $\pi: P \ra P/G$ considered in Example \ref{affinecase}.
Then the property $\tau (S(h))= \mbox{flip} \circ \tau(h)$ of the
pull-back $\tau=t^*$ of the classical translation map corresponds by duality to the property
$t(q,p)=t(p,q)^{-1}$, $p,q \in P$ .
Similarly, property \eqref{tauS2} corresponds by duality to
  $t(q,p)t(p,q)=e$, the neutral element of $G$.
\end{rem}

When $(H,\r)$ is a cotriangular bialgebra,
the category $\qcr$ of quasi-commutative $H$-comodule algebras 
with the braided tensor product $\bot$ becomes a braided monoidal category.
Moreover, the canonical map is a morphism in $\qcr$.

\begin{prop}\label{prop:tprqc}
Let $(H,\r)$ be a cotriangular bialgebra.
The braided tensor product of quasi-commutative H-comodule algebras is a
quasi-commutative $H$-comodule algebra. 
\end{prop}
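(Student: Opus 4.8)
The plan is to read quasi-commutativity as commutativity of an algebra object in the braided monoidal category of $H$-comodules, and then to use that for $H$ cotriangular this braiding is in fact a symmetry. By \eqref{qcomm2}, an $H$-comodule algebra $A$ lies in $\qcr$ precisely when $m_A = m_A\circ\Psi^R_{A,A}$, i.e.\ when $A$ is a \emph{commutative} algebra object in the braided monoidal category $(\M^H,\otimes,\Psi^R)$, with braiding \eqref{braiding}. Since $(H,\r)$ is cotriangular, $\r = \bar\r_{21}$ (equivalently $Q = R_{21}\ast R = \varepsilon\otimes\varepsilon$), so that $\Psi^R_{W,V}\circ\Psi^R_{V,W} = \id_{V\otimes W}$ and $(\M^H,\otimes,\Psi^R)$ is symmetric monoidal. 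The braided tensor product $A\bot C$ of Proposition \ref{prop:tpr0} is exactly the tensor product of the two algebra objects $A$ and $C$ in this category, with product $m_{A\bot C} = (m_A\otimes m_C)\circ(\id_A\otimes\Psi^R_{C,A}\otimes\id_C)$ as in \eqref{tpr}.

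Thus the claim reduces to the standard categorical fact that, in a symmetric monoidal category, the tensor product of two commutative algebras is commutative. I would record its proof by a short graphical-calculus argument: decompose the total braiding $\Psi^R_{A\bot C,\,A\bot C}$ by the hexagon relations (which on the $R$-form are the identities \eqref{eq-iii}) into the four elementary braidings of the factors; slide these past the two multiplications using naturality; apply commutativity $m_A\circ\Psi^R_{A,A} = m_A$ and $m_C\circ\Psi^R_{C,C} = m_C$ of the factors; and finally collapse the remaining double crossing of the two inner legs using the symmetry $\Psi^R\circ\Psi^R = \id$.

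For a self-contained verification in Sweedler notation one can instead evaluate both sides on $x = a\otimes c$ and $y = a'\otimes c'$. The left-hand side is $x\tpr y = a\,\zero{a'}\otimes\zero{c}\,c'\,\ru{\one{c}}{\one{a'}}$ from \eqref{tpr}. Applying the braiding \eqref{braidingA} of the total comodule $A\bot C$ (whose coaction is $\zero{a}\otimes\zero{c}\otimes\one{a}\one{c}$) and then the product \eqref{tpr}, and organizing the iterated coactions by coassociativity, one finds
\[
m_{A\bot C}\big(\Psi^R_{A\bot C,\,A\bot C}(x\otimes y)\big) = \zero{a'}\,\zero{a}\otimes\zero{c'}\,\zero{c}\ \ru{\one{c'}}{\one{a}}\ \ru{\two{a}\,\one{c}}{\one{a'}\,\two{c'}}\, .
\]
Expanding the last $R$-factor by \eqref{eq-iii} and applying the quasi-commutativity identities \eqref{qcomm1}--\eqref{qcomm2} to the swapped products $\zero{a'}\,\zero{a}$ in $A$ and $\zero{c'}\,\zero{c}$ in $C$ separately, the expression rearranges back into the left-hand side, the closing step being governed by cotriangularity.

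The main obstacle is precisely that closing step. Note that the crossed pairing $\ru{\one{c'}}{\one{a}}$ produced by the braiding must turn into the straight pairing $\ru{\one{c}}{\one{a'}}$ of the left-hand side, and this reorganization is exactly where the symmetry $\Psi^R\circ\Psi^R = \id$ (equivalently $Q = \varepsilon\otimes\varepsilon$) is indispensable. For a merely coquasitriangular $H$ the double braiding of the two inner tensor legs does not collapse, and the braided tensor product of quasi-commutative algebras need not be quasi-commutative; the Remark following Proposition \ref{prop:tpr} already signals this, observing that $(\A^H,\bot)$ carries the braiding \eqref{braidingA} if and only if $R$ is cotriangular. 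Isolating this single use of cotriangularity within the bookkeeping of Sweedler indices is the only delicate part, everything else being the routine hexagon/naturality manipulation encoded by \eqref{eq-iii} and \eqref{YB}.
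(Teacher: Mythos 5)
Your proposal is correct, and in fact it contains two arguments. The Sweedler-notation verification in your second half is essentially the paper's own proof: the paper expands $(a\ot c)\tpr(a'\ot c')$ using \eqref{tpr} together with quasi-commutativity of $A$ and of $C$, expands the braided right-hand side $\zero{(a'\ot c')}\tpr\zero{(a\ot c)}\,\ru{\one{(a\ot c)}}{\one{(a'\ot c')}}$ via \eqref{eq-iii}, and observes that the two expressions agree precisely because cotriangularity collapses the crossed pair
$\ru{\one{c'}}{\one{a}}\,\ru{\two{a}}{\two{c'}}=\ruin{\one{a}}{\one{c'}}\,\ru{\two{a}}{\two{c'}}=\varepsilon(\one{a})\,\varepsilon(\one{c'})$,
which is exactly the ``closing step'' you isolate. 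Your first, categorical argument is a genuinely different route that the paper does not take: reading \eqref{qcomm2} as commutativity of an algebra object in the braided category $(\M^H,\ot,\Psi^R)$, noting that cotriangularity is equivalent to $\Psi^R\circ\Psi^R=\id$ so that this category is symmetric, and invoking the standard fact that in a symmetric monoidal category the tensor product of commutative algebra objects (with multiplication twisted by the symmetry, which is exactly \eqref{tpr}) is again commutative. That route buys conceptual clarity and generality --- the proposition becomes an instance of a purely category-theoretic statement, with cotriangularity entering only through the symmetry axiom --- at the cost of either citing the standard fact or carrying out the hexagon/naturality diagram chase you sketch; the paper's computation is self-contained and has the merit of displaying concretely which product of $R$-forms must cancel. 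Both arguments are complete and correct, and your identification of where plain coquasitriangularity would fail matches the paper's remark that \eqref{braidingA} is a braiding on $(\A^H,\bot)$ only in the cotriangular case.
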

\begin{proof}
Let $A,C \in \A^H$ be quasi-commutative, then for all $a, a'\in A$ and $ c, c'\in C$,
\begin{align*}
(a \ot c) \tpr (a' \ot c') & = a\zero{a'} \ot \zero{c} c' ~\ru{\one{c}}{\one{a'}} \\
& = \zero{a'}\zero{a}\ot\zero{c'}\zero{c}\,
\ru{\one{a}}{\one{a'}}
\ru{\one{c}}{\one{c'}}
\ru{\two{c}}{\two{a'}}
\end{align*}
where we used the definition of the $\tpr$-product in \eqref{tpr} and the quasi commutativity of $A$ and $C$. On the other hand, 
\begin{align*}
& \zero{({a'}\ot {c'})}\tpr\zero{(a\ot c)}\ru{\one{(a \ot c})}{\one{(a' \ot c')}} = \\ 
& \;= (\zero{a'}\ot \zero{c'})\tpr(\zero{{a}}\ot \zero{{c}})\ru{\one{a}\one{c}}{\one{a'}\one{c'}}\\ 
& \;=
\zero{a'}\zero{{a}}\ot \zero{c'}\zero{{c}}\,
\ru{\one{c'}}{\one{a}} \ru{\three{a}}{\one{a'}}
\ru{\two{a}}{\two{c'}}\ru{\one{c}}{\three{c'}}
\ru{\two{c}}{\two{a'}}.
\end{align*}
This coincides with the previous expression since, using the cotriangularity of $H$, one can simplify
$ \ru{\one{c'}}{\one{a}}\ru{\two{a}}{\two{c'}}=
\ruin{\one{a}}{\one{c'}}\ru{\two{a}}{\two{c'}}
=\varepsilon({\one{a}}) \varepsilon({\one{c'}}) $.
\end{proof}

As a direct consequence of this proposition 
we have: 
\begin{cor}\label{catqc}
Let $(H,\r)$ be a cotriangular Hopf algebra. The category $\qcr $ endowed with the braided tensor product $\bot$ is a
 full sub-monoidal category of $(\A^H,\bot)$.
\end{cor}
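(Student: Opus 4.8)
The plan is to reduce Corollary \ref{catqc} to the preceding Proposition \ref{prop:tprqc}, since the two results are manifestly linked: the corollary simply packages the closure-under-$\bot$ statement proved there into the language of monoidal subcategories. First I would recall from Proposition \ref{prop:tpr} that $(\A^H,\bot)$ is already a monoidal category when $(H,\r)$ is coquasitriangular, and in particular when it is cotriangular. The subcategory $\qcr$ was introduced in Definition \ref{def:qcomm} as the full subcategory of $\A^H$ whose objects are the quasi-commutative $H$-comodule algebras, with morphisms the $H$-comodule algebra morphisms. Thus the only thing requiring verification is that $\qcr$ is genuinely a \emph{monoidal} subcategory: that it is closed under the tensor product $\bot$ (on objects and on morphisms) and that it contains the unit object.

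The key steps, in order, are as follows. First, closure on objects: given $A,C\in\qcr$, Proposition \ref{prop:tprqc} shows directly that the braided tensor product $A\bot C$ is again quasi-commutative, hence $A\bot C\in\qcr$. Second, closure on morphisms: if $\phi\colon A\to E$ and $\psi\colon C\to F$ are morphisms in $\qcr$, then by definition they are $H$-comodule algebra maps, and Proposition \ref{prop:tpr0} guarantees that $\phi\bot\psi\colon A\bot C\to E\bot F$ is again an $H$-comodule algebra map, hence a morphism in $\qcr$ between the two objects just shown to lie in $\qcr$. Third, the unit object: the unit of $(\A^H,\bot)$ is $\bbK$ with coaction $\eta_H$, and by Example \ref{trivexH} (quasi-commutativity with the trivial $R$-form being equivalent to commutativity) the field $\bbK$ is commutative and hence quasi-commutative, so $\bbK\in\qcr$. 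Finally, since $\qcr$ is a full subcategory, the associativity and unit coherence isomorphisms are inherited from $(\A^H,\bot)$ and automatically restrict, making $(\qcr,\bot)$ a monoidal subcategory.

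I do not expect any serious obstacle here; the corollary is essentially a bookkeeping consequence of Proposition \ref{prop:tprqc} together with the functoriality of $\bot$ on morphisms recorded in Proposition \ref{prop:tpr0}. The only point that demands a moment's care is confirming that the unit object $\bbK$ lies in $\qcr$, which is immediate from Example \ref{trivexH}, and that fullness lets us transport the coherence data unchanged. Consequently the argument is short: invoke Proposition \ref{prop:tprqc} for objects, Proposition \ref{prop:tpr0} for morphisms, Example \ref{trivexH} for the unit, and conclude that $(\qcr,\bot)$ is a full sub-monoidal category of $(\A^H,\bot)$.
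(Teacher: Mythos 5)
Your proposal is correct and takes essentially the same route as the paper, which simply records the corollary as a direct consequence of Proposition \ref{prop:tprqc} (closure of $\qcr$ under $\bot$), with the bookkeeping on morphisms (Proposition \ref{prop:tpr0}), the unit object, and the coherence data left implicit exactly as you spell them out. The one imprecision is your appeal to Example \ref{trivexH} for $\bbK\in\qcr$: that example assumes $H$ commutative with trivial $R$-form, whereas the correct (and equally immediate) justification for a general cotriangular $(H,\r)$ is that the coaction on $\bbK$ is trivial, so quasi-commutativity reduces to commutativity of $\bbK$ together with the normalization property \eqref{Rnormalized} of the $R$-form.
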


From Theorem \ref{prop:canam} we then have: 
\begin{cor}
Let $(H,\r)$ be a cotriangular Hopf algebra, $A\in \qcr$ and $B=A^{coH}\subseteq A$ a Hopf--Galois extension. 
Then $\bh\in \qcr$ and the corresponding canonical map $\chi : A \bot_B A \longrightarrow  A \bot \bh$ 
is an isomorphism in the category ($\qcr, \bot)$.
\end{cor}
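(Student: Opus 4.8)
The plan is to assemble this statement from the cotriangular results already in place, the only real work being to check that the source and target of $\chi$ lie in $\qcr$ and then to upgrade the morphism of Theorem \ref{prop:canam} to an isomorphism.

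First I would note that $\bh \in \qcr$ is exactly Example \ref{mainexHcot}: cotriangularity gives $Q = \varepsilon \otimes \varepsilon$, whereupon the braided commutativity \eqref{H-braid-commQQ} of $\underline H$ collapses to the quasi-commutativity condition of Definition \ref{def:qcomm}. Since $A, \bh \in \qcr$, Proposition \ref{prop:tprqc} yields that the target $A \bot \bh$ is quasi-commutative, hence an object of $\qcr$. The same proposition gives $A \bot A \in \qcr$; because $B$ is central in $A$ (Remark \ref{remBinZ}), Lemma \ref{lem:B} guarantees that the $\tpr$-product descends to the balanced product $A \bot_B A$, which is thereby a well-defined $H$-comodule algebra, namely the quotient of $A \bot A$ by the $H$-subcomodule ideal generated by $b \otimes 1 - 1 \otimes b$. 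Quasi-commutativity, being phrased through the coaction in \eqref{qcomm1}, is preserved under the surjective $H$-comodule algebra map $A \bot A \to A \bot_B A$, so $A \bot_B A \in \qcr$ as well.

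With both source and target in $\qcr$, Theorem \ref{prop:canam} already presents $\chi$ as an $H$-comodule algebra map, and by Corollary \ref{catqc} the inclusion of $\qcr$ into $(\A^H, \bot)$ is full; hence $\chi$ is a morphism in $(\qcr, \bot)$. It remains to see that it is invertible there. Since $B = A^{coH} \subseteq A$ is Hopf--Galois, $\chi$ is bijective by Definition \ref{def:hg}, so it has a linear inverse $\chi^{-1}$. The inverse of a bijective algebra map is an algebra map, and the inverse of a bijective $H$-comodule map is an $H$-comodule map (both are checked by composing the defining identities of $\chi$ on the left and right with $\chi^{-1}$); therefore $\chi^{-1}$ is itself an $H$-comodule algebra map, that is, a morphism in $\qcr$. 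Thus $\chi$ is an isomorphism in $(\qcr, \bot)$.

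I expect the single point needing care to be the descent of quasi-commutativity to the quotient $A \bot_B A$; everything else is a direct bookkeeping assembly of the cotriangular results, the bijectivity of $\chi$ coming for free from the Hopf--Galois hypothesis and the inverse being automatically a morphism.
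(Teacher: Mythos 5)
Your proof is correct and follows essentially the same route as the paper, which states this corollary as an immediate consequence of Theorem \ref{prop:canam} together with Example \ref{mainexHcot}, Proposition \ref{prop:tprqc}/Corollary \ref{catqc}, and the bijectivity of $\chi$ from the Hopf--Galois hypothesis. Your explicit checks that quasi-commutativity descends along the surjective $H$-comodule algebra map $A\bot A\to A\bot_B A$ and that the linear inverse $\chi^{-1}$ is automatically an $H$-comodule algebra map are exactly the details the paper leaves implicit, and you handle them correctly.
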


\section{The gauge group}\label{GG}
In the classical (commutative) case one way to define the group
$\G_P$ of gauge transformations of a principal $G$-bundle $\pi:P \to P/G$ is as the group of $G$-equivariant maps, 
\begin{equation}\label{GPclass}
\G_P:=\{\sigma:P\to G ; \,\, \sigma (pg)=g^{-1}\sigma(p)g\}~,
\end{equation}
where $G$ is a right $G$-space with respect to the right $G$-adjoint action.
The group structure is by point-wise product:
$(\sigma\tilde\sigma)(p)=\sigma(p)\tilde\sigma(p)$, for all 
$\sigma,\tilde\sigma\in \G_P$ and $p\in P$. 
The gauge group can be equivalently defined as the subgroup of principal bundle
automorphisms  which are vertical (project to the identity on the base space):
\begin{equation}\label{AVclass}
\mathrm{Aut}_{P/G}(P) :=\{\varphi:P\to P ; \,\, \varphi (pg) = \varphi(p)g ~,\pi( \varphi(p)) = \pi(p) \},
\end{equation}
with group law given by map composition.  The equivalence of these two definitions is well known \cite[\S 7.1]{hus}. 

These definitions can be dualised for algebras rather than spaces. For instance, in the context of the affine varieties case treated in Example \ref{affinecase}, where
$A=\O(P)$, $B=\O(P/G)$, $H=\O(G)$, the gauge group $\G_P$ in
\eqref{GPclass} of
$G$-equivariant maps
corresponds to that of $H$-equivariant maps (or $H$-comodule maps)
that are also algebra maps
\begin{equation}\label{GAcomm}\G_A:=\{ \f: H\to A ;\,\, \delta^A(\f)=(\f\otimes \id)\circ \Ad\,,~ \f
{\mbox{ algebra map}}\}~.
\end{equation} 
The group structure is the convolution product.
The algebra map property for the pull-back
$\f=\sigma^\ast: H\to A$ comes 
from the point-wise product in $H=\O(G)$ and
$A=\O(P)$: $\f(hk)(p)=(hk)(\sigma(p))=
h(\sigma(p)k(\sigma(p))=(\f(h)\f(k))(p)$, for all $h,k\in H, p\in P$. 

Similarly, the vertical
automorphisms description \eqref{AVclass} of the gauge group corresponds to that of $H$-equivariant maps 
\begin{equation}\label{AVcomm}\Aut{A}=\{\F: A\to A ; \,\, \delta^A \F= (\F \otimes \id) \delta^A\,,
~\F|_B=\id: B\to B\,,~\F
\mbox{ algebra map}\}~.
\end{equation}

The dual definitions can be given for a general Hopf--Galois extension
$B=A^{co H}\subseteq A$, with $A$ and $H$
commutative algebras. However, for a noncommutative Hopf--Galois extension the algebra map condition
in these definitions is in general very restrictive. This does not
come as a surprise: for noncommutative
algebras already algebra automorphisms are very constrained with respect to the commutative
case.

In \cite[\S 5]{brz-tr} this issue was faced by weakening the notion
of gauge group: gauge transformations
are no longer algebra maps; 
they are defined to be
comodule maps that are invertible and unital.
In this ``no algebra maps" context the isomorphism
$\G_A\simeq \Aut{A}$ still holds.
A drawback of this approach, besides the extra requirement of 
 invertibility of the maps, is that the resulting gauge groups are
very big, even in the classical case. For example the gauge group of the
$G$-bundle on a point $G\to \{*\}$ is much bigger than the structure
group $G$ as the following simple example shows.
\begin{ex}
Consider the group $\IZ_2:=\{e, u\}$ of integers modulo $2$: $e+e=
u+u=e, e+u=u+e=u$.  Let $H$ be its coordinate Hopf algebra; this is
the commutative complex algebra generated by the two orthogonal
projections $p_e$ and $p_u$ (where $p_a(b)=\delta_{a,b\,}$, for $a,b\in
\IZ_2$) with unit $1_H = p_e + p_u$ the constant function $1$. 
It has cocommutative coproduct   
$$
\Delta p_e = p_e \ot p_e + p_u \ot p_u \, , \qquad
\Delta p_u = p_e \ot p_u + p_u \ot p_e \, 
$$
and counit $\varepsilon(p_e)=1$, $\varepsilon(p_u)=0$.
The trivial $\IZ_2$-bundle over a point is dually described
as the Hopf--Galois extension $B\subseteq A$, where $A=H$ 
with coaction $\Delta$ and 
resulting algebra of coinvariants $B=\IC$. Since $A=H$ is the
linear span of   $p_e$ and $p_u$ and the condition $F|_B=\id: B\to B$
is just that of $\IC$-linearity of the map $\F:A\to A$, we have
linear maps $\F (xp_e + y p_u) = x'p_e + y' p_u$ from $\IC^2$
to $\IC^2$, that is, complex $2 \times 2$ matrices
$$
 \begin{pmatrix}
x \\ y 
\end{pmatrix}
\mapsto 
\begin{pmatrix}
x' \\ y' 
\end{pmatrix}
=
\begin{pmatrix}
a & b  \\  c & d 
\end{pmatrix}
 \begin{pmatrix}
x \\ y 
\end{pmatrix} \, .
$$
Unitality of $\F$ requires $b=1-a$ and $c=1-d$;  invertibility of $\F$
requires $a+d\neq  1$. 
Finally, $H$ equivariance, that is $\Delta \F= (\F \otimes \id)
\Delta$, leads to $a=d$. Summing up, the group of these maps is the $GL(2,\IC)$-subgroup  
$$
\left\{\begin{pmatrix}
a & 1-a  \\  1-a & a 
\end{pmatrix}  \, , \quad \mbox{with} \quad 2a \neq  1 
\right\} \, .
$$
If one imposes the additional condition that the maps $\F$ are algebra maps this group collapses to a much smaller one. 
Indeed, the requirement
$$
\F \big( (x p_e + y p_u) (x' p_e + y' p_u) \big) = \F(x p_e + y p_u) \F(x' p_e + y' p_u) 
$$
for all $(x,y)$ and $(x',y')$ in $\IC^2$, forces $a=1$ or $a=0$. Thus the resulting group is
$$
\left\{\begin{pmatrix}
1 & 0  \\  0 & 1 
\end{pmatrix}  \, , \, 
\begin{pmatrix}
0 & 1 \\  1 & 0 
\end{pmatrix} 
\right\} \simeq \IZ_2 \, ,
$$
that is the expected group of gauge transformations.
\qed\end{ex}

We shall work in the noncommutative setting of the monoidal
category $(\A^H,\bot)$. In this context we show that it is natural
to define the gauge group of vertical automorphisms as in
\eqref{AVcomm}, that is, to  require vertical
automorphisms $\F$ to be algebra maps. Similarly, $\G_A$ is defined as
the group of $H$-equivariant algebra maps $\f: \underline H\to A$. The issue
of the lack of algebra maps is therefore in this case overcome by
properly choosing the algebra structure on $H$, namely the
multiplication $\hpr$ of the braided
Hopf algebra $\underline H$ rather than that of the Hopf algebra $H$. 

We begin by studying this latter space $\G_A$ of
$H$-equivariant algebra maps. 
We then consider
the gauge group $\Aut{A}$ of vertical automorphisms and prove its equivalence
with $\G_A$.  We present a few examples; while they are mainly commutative ones, they serve as a way of illustration of the notions involved. They will be deformed to noncommutative examples later on in the paper.

\subsection{The gauge group of equivariant algebra maps}~\\[-1.3em]
\begin{prop}\label{prop:GA}
Let $(H,R)$ be a coquasitriangular Hopf algebra, $(\bh, \hpr, \eta, \Delta, \varepsilon, \bs, \Ad)$ the associated
braided Hopf algebra, $A\in \qcr$ and $B=A^{coH}\subseteq A$ a Hopf--Galois extension.
The $\bbK$-module  
\beq 
\G_A:=\Hom_{\A ^H}(\bh, A) 
\eeq
of $H$-equivariant algebra maps $\underline {H}\to A$ is a group with
respect to the convolution product. The inverse of  $\f \in \G_A$ is given by
$\bar{\f}:=\f\circ \bs$.
\end{prop}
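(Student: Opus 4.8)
The plan is to verify the three group axioms for $(\G_A, *)$ directly, where $*$ is the convolution product inherited from the coalgebra structure of $\bh$ and the algebra structure of $A$. First I would check that $\G_A$ is closed under convolution: given $\f, \g \in \G_A$, I must show $\f * \g$ is again an $H$-equivariant algebra map $\bh \to A$. The $H$-equivariance of $\f * \g$ follows routinely from the fact that the coproduct $\Delta$ is an $H$-comodule map (the coaction on $\bh$ is $\Ad$) together with equivariance of $\f$ and $\g$. The unit of the convolution is $\eta_A \circ \varepsilon_{\bh}$, which is clearly an $H$-equivariant algebra map, so it lies in $\G_A$; associativity of $*$ is inherited from coassociativity of $\Delta$ and associativity of $m_A$.

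The substantive point, and the step I expect to be the main obstacle, is showing that $\f * \g$ is an \emph{algebra map} for the product $\hpr$ on $\bh$, that is $(\f * \g)(h \hpr k) = ((\f * \g)(h))\,((\f * \g)(k))$. Here the braiding genuinely enters. Because $\Delta: \bh \to \bh \bot \bh$ is an algebra map for the braided tensor product (Definition~\ref{def:bbH}, equation \eqref{cop-braid}), applying $\Delta$ to $h \hpr k$ produces a braided product of the coproducts, and the braiding $\Psi^R_{\bh,\bh}$ appears. The key input I would exploit is Proposition~\ref{prop:tpr0}: since $\f$ and $\g$ are morphisms of $H$-comodule algebras, the map $\f \bot \g: \bh \bot \bh \to A \bot A$ is itself an algebra map for the braided tensor products. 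I would then write $\f * \g = m_A \circ (\f \bot \g) \circ \Delta$ and compute:
\begin{align*}
(\f * \g)(h \hpr k)
&= m_A \circ (\f \bot \g) \circ \Delta(h \hpr k) \\
&= m_A \circ (\f \bot \g)\big(\Delta(h) \,\tpr\, \Delta(k)\big) \\
&= m_A\big((\f \bot \g)(\Delta(h)) \,\tpr\, (\f\bot\g)(\Delta(k))\big).
\end{align*}
Now $m_A: A \bot A \to A$ is an algebra map precisely because $A \in \qcr$ is quasi-commutative (Proposition~\ref{qclemma}), so $m_A$ converts the $\tpr$-product into the ordinary product in $A$, yielding $\big((\f*\g)(h)\big)\big((\f*\g)(k)\big)$. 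Thus the quasi-commutativity of $A$ is exactly what makes the convolution of two algebra maps again an algebra map, which is why the hypothesis $A \in \qcr$ is essential here.

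Finally I would verify that $\bar{\f} := \f \circ \bs$ is a two-sided convolution inverse of $\f$. Since $\bs$ is the braided antipode of $\bh$, the antipode property \eqref{bantipode} gives $m_{\bh} \circ (\id \bot \bs)\circ \Delta = \eta_{\bh}\circ \varepsilon_{\bh}$ and its mirror; applying $\f$ and using that $\f$ is an algebra map and $H$-equivariant (so it intertwines $\bs$ with an antipode-type operation in a way compatible with $m_A$), one obtains $\f * (\f\circ \bs) = \eta_A \circ \varepsilon_{\bh} = (\f\circ \bs) * \f$. The only delicate point is that $\bs$ is a \emph{braided} anti-algebra map \eqref{SmmRSS}, so I must confirm that $\f \circ \bs$ is still an honest algebra map into $A$; this again uses quasi-commutativity of $A$ to absorb the braiding $\Psi^R_{A,A}$ introduced by \eqref{SmmRSS} into the ordinary (commutative-up-to-$R$) product of $A$, so that $\f \circ \bs \in \G_A$. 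Combining closure, associativity, the identity element, and the inverse completes the proof that $\G_A$ is a group.
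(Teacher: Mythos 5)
Your proof is correct and follows essentially the same route as the paper's: the multiplicativity of $\f * \g$ rests on $\Delta_{\bh}$ being an algebra map into $\bh \bot \bh$, on $\f \bot \g$ being an algebra map (Proposition \ref{prop:tpr0}), and on $m_A : A \bot A \to A$ being an algebra map by quasi-commutativity (Proposition \ref{qclemma}), which is precisely the content of the paper's Sweedler-notation computation, and your treatment of the inverse $\bar{\f} = \f \circ \bs$ (antipode identity for convolution invertibility, then quasi-commutativity of $A$ to absorb the braiding from \eqref{SmmRSS} so that $\f \circ \bs$ is again an algebra map) matches the paper's argument. The only point you leave implicit is that $\f \circ \bs$ must also be checked to be an $H$-comodule map, which is immediate since $\bs$ is an $H$-comodule map by Definition \ref{def:bbH}, as the paper notes in one line.
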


\begin{proof}
Given $\f, \g\in \Hom_{\A ^H}(\bh, A)$, the product $\f*\g$ is an
$H$-comodule map; for all $h\in \underline {H}$,
\begin{align*}
\delta^A(\f * \g)(h) &
= \delta^A \big(\f(\one{h})\g(\two{h}) \big)
= \delta^A \big(\f(\one{h}) \big)\delta^A \big(\g(\two{h}) \big)\\
& 
= \big(\f(\two{h}) \ot S(\one{h})\three{h} \big)\big(\g(\five{h}) \ot S(\four{h})\six{h} \big)
= \f(\two{h}) \g(\three{h})\ot S(\one{h})\four{h} 
\end{align*}
where we used that $\delta^A$ is an algebra morphism and that both $\f$ and $\g$ are $H$-comodule morphisms.
Then
\begin{align*}
\delta^A(\f * \g)(h) 
= (\f * \g)(\two{h}) \ot S(\one{h})\three{h} 
= \big((\f * \g) \ot \id_H \big) \Ad (h) \, . 
\end{align*}
The product $\f * \g$ is also an algebra map. Recall from
\eqref{cop-braid} that $\Delta_\bh$ is an algebra map with respect to
the products $\hpr$ in $\bh$ and  $\tpr$ in the braided tensor product algebra $\bh \bot \bh$, that is
$
\Delta_\bh (h \hpr k)  = \one{h} \hpr \two{k} \bot \three{h} \hpr \four{k} 
\ru{S(\two{h})\four{h}}{S(\one{k})\three{k}}\, 
$
for all $h,k \in \bh$. Then we compute 
\begin{align*}
(\f * \g) (h \hpr k)
&
= \f(\one{h} \hpr \two{k} ) \g( \three{h} \hpr \four{k} ) \ru{S(\two{h})\four{h}}{S(\one{k})\three{k}}
\\
&
= \f(\one{h}) \f(\two{k} ) \g( \three{h}) \g( \four{k} ) \ru{S(\two{h})\four{h}}{S(\one{k})\three{k}}
\\
&
= \f(\one{h}) \zero{(\f(\one{k} ))}
 \zero{(\g( \two{h}))} \g( \two{k} ) \ru{\one{(\g( \two{h}))} }{\one{(\f(\one{k} ))}}
\\
&
= \f(\one{h}) \g( \two{h}) \f(\one{k} )\g( \two{k} ) 
\\
&
=(\f *\g)(h) (\f *\g)(k) \; ,
\end{align*}
where the second equality uses that $\f,\g  $ are algebra maps, and the third equality that they are $H$-comodule maps. The fourth one follows from the quasi-commutativity of $A$.  

Any $\f:\underline H\to A$ is convolution invertible, with inverse $\bar{\f}:=\f\circ \bs$; 
indeed  (recalling the antipode defining property \eqref{bantipode})  
$$
(\f * \bar\f) (h)= \f (\one{h}) \f (\bs(\two{h})) = \f(\one{h} \hpr \bs(\two{h})) = \varepsilon(h)1_A~,
$$
and similarly  $\bar\f *\f = 1_A\, \varepsilon $. 
The map  $\bar{\f}$ is an $H$-comodule map being composition of
$H$-comodule maps.
In order to prove that $\bar{\f}\in \G_\A=\Hom_{\A
  ^H}(\bh, A) $ we are left to show that $\bar{\f}$ is an algebra map.
This is immediate if the Hopf algebra $H$ is cotriangular, because in this
case the braided antipode $\bs$ is an algebra map. In the coquasitriangular case,
with $A$ quasi-commutative, few more passages are needed. 
We evaluate the algebra map $\f$ on
\begin{equation}\label{Sbam}
\bs(h\hpr k)=\hpr\circ \Psi^R_{\bh,\bh}(\bs{(h)}\otimes \bs(k)) 
= \zero{\bs{(k)}} \hpr \zero{\bs{(h)}} \ru{\one{\bs{(h)}}}{\one{\bs{(k)}}}~,
\end{equation}
$h,k\in \bh$, this being the braided anti-algebra map property
\eqref{SmmRSS} of the braided antipode.  We compute
\begin{align*}
\f (\bs(h\hpr k)) 
&= \f \left(\zero{\bs{(k)}} \hpr \zero{\bs{(h)}} \right)\ru{\one{\bs{(h)}}}{\one{\bs{(k)}}} 
\\
&= \f (\zero{\bs{(k)}}) \f (\zero{\bs{(h)}})\ru{\one{\bs{(h)}}}{\one{\bs{(k)}}} 
\\
&= \zero{\f (\bs{(k)})} \zero{\f (\bs{(h)})} \ru{\one{\f (\bs{(h)})}}{\one{\f (\bs{(k)})}} 
\\
& = \f (\bs(h)) \f (\bs(k))
\end{align*}
where for the last but one equality we  used that $\f$ is a
morphism of comodules and for the last equality we  used
quasi-commutativity of the algebra $A$, as defined in \eqref{qcomm2}.
Therefore $\bar\f (h\hpr k) =\bar\f (h)\bar\f (k)$ as claimed.
\end{proof}

In the commutative case, for a principal $G$-bundle $\pi: P \ra M$ which is trivial, 
the gauge group is isomorphic to the group of maps from $M$ to
$G$ (see e.g. \cite[\S 7.1, Prop.1.7]{hus}). 
For Hopf--Galois extensions we have:
\begin{lem}\label{lem:ban} 
Let $B \subseteq A$ be a trivial Hopf--Galois extension, with $(H,R)$
coquasitriangular  and $A\in \qcr$.
Then $R$ is trivial, $H$ and $A$ are commutative, and 
the gauge group $\G_A$ coincides with the group (with convolution product $*$) of algebra maps from $H$ to $B$:
$$
\G_A\simeq  \left( \{ \alpha: H \ra B \mbox{ algebra maps} \} , *\right) .
$$
\end{lem}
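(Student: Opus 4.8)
The plan is to first extract the commutativity consequences from triviality and then realise the group isomorphism as convolution conjugation by the cleaving map.

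First I would use the cleaving map. Triviality provides a convolution invertible $H$-comodule map $j\colon H\to A$ that is in addition an algebra map, and the induced map $B\otimes H\to A$, $b\otimes h\mapsto b\,j(h)$, is a bijection of left $B$-modules and right $H$-comodules; in particular $j$ is injective. Since $j$ is an $H$-comodule algebra map, $\delta^A\circ j=(j\otimes\id)\circ\Delta$, and the quasi-commutativity \eqref{qcomm2} of $A$ gives, for all $h,k\in H$, $\;j(h)j(k)=j(\one{k})j(\one{h})\,\ru{\two{h}}{\two{k}}$, that is $j(hk)=j(\one{k}\,\one{h})\,\ru{\two{h}}{\two{k}}$. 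Injectivity of $j$ then forces $hk=\one{k}\,\one{h}\,\ru{\two{h}}{\two{k}}$, which is exactly the quasi-commutativity \eqref{qcomm2} of the $H$-comodule algebra $(H,\cdot,\Delta)$. By Example \ref{ex:usualm_H} this holds only if $R=\varepsilon\otimes\varepsilon$ is trivial and $H$ is commutative, and quasi-commutativity of $A$ with trivial $R$ reduces to commutativity (Example \ref{trivexH}), so $A$ is commutative as well. With $R$ trivial the braided product \eqref{hpr} collapses to the product of $H$, so $\bh=(H,\cdot,\Ad)$ and $\G_A$ is the group, under convolution, of algebra maps $\f\colon H\to A$ satisfying the adjoint equivariance $\delta^A\circ\f=(\f\otimes\id)\circ\Ad$.

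Next I would build the isomorphism by conjugating with $j$ inside the convolution algebra $(\Hom(H,A),*)$. As $A$ is commutative, the convolution inverse is $j^{-1}=j\circ S$, and $x\mapsto j*x*j^{-1}$ is an automorphism of $(\Hom(H,A),*)$. I set $\Phi(\f):=j*\f*j^{-1}$ and $\Psi(\alpha):=j^{-1}*\alpha*j$. Being conjugations by mutually inverse elements, $\Phi$ and $\Psi$ are inverse bijections and multiplicative for $*$, so it only remains to check that they land in the correct sets. For an algebra map $\alpha\colon H\to B$ one computes $\Psi(\alpha)(h)=\alpha(\two{h})\,j\big(S(\one{h})\,\three{h}\big)$, which is $m_A\circ(\alpha\otimes j)\circ\Ad$; using that $\alpha$ is coinvariant valued and $j$ is a comodule map, a direct Sweedler computation gives $\delta^A\circ\Psi(\alpha)=(\Psi(\alpha)\otimes\id)\circ\Ad$, so $\Psi(\alpha)\in\G_A$. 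Conversely, for $\f\in\G_A$ one has $\Phi(\f)(h)=j(\one{h})\,\f(\two{h})\,j(S\three{h})$, and applying $\delta^A$ and collapsing the $H$-legs with the antipode axiom yields $\delta^A\circ\Phi(\f)=\Phi(\f)\otimes 1_H$, so $\Phi(\f)$ is valued in $B$. In all cases the algebra-map property is automatic: a convolution of algebra maps into the commutative algebra $A$ is again an algebra map (because $\Delta$ is multiplicative and $A$ is commutative), and $\hpr$ is the product of $H$.

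The main obstacle is this last interchange, namely that conjugation by $j$ trades the twisted $\Ad$-equivariance for plain $H$-coinvariance. The computation must be organised with care: after applying $\delta^A$ to $\Phi(\f)(h)=j(\one{h})\f(\two{h})j(S\three{h})$ and expanding the three comodule-map properties, the $H$-component has the form $\one{h}\,S(\two{h})\cdots S(\cdot)$ with spectator legs of the coproduct landing in the $A$-component between the cancelling pairs; coassociativity allows one to contract each consecutive pair $x_{(i)}S(x_{(i+1)})$ to a counit irrespective of those spectators, leaving $1_H$ in the $H$-slot and the desired value in the $A$-slot. Once this is verified, $\Phi$ and $\Psi$ are mutually inverse group isomorphisms between $\G_A$ and the convolution group of algebra maps $H\to B$ (with unit $\eta_B\circ\varepsilon$ and inverses $\alpha\mapsto\alpha\circ S$), which is precisely the assertion.
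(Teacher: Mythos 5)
Your proof is correct, and its first half (triviality $\Rightarrow$ quasi-commutativity of $(H,\cdot,\Delta)$ via the cleaving map, then Example \ref{ex:usualm_H} and Example \ref{trivexH} to get $R=\varepsilon\ot\varepsilon$ and commutativity of $H$ and $A$) is essentially the paper's argument, with injectivity of $j$ playing the role that the trivialization $A\simeq B\ot H$ plays in the paper. Where you diverge is in how the bijection is realised: the paper works inside the trivialization $A\simeq B\ot H$ and writes down the explicit pair $\f\mapsto\alpha_\f=(\id\ot\varepsilon)\circ\f$ and $\alpha\mapsto\f_\alpha=(\alpha\ot\id)\circ\Ad$, checking comodule/algebra map properties and mutual inverseness directly; you instead conjugate inside the convolution algebra $(\Hom(H,A),*)$ by the cleaving map, $\Phi(\f)=j*\f*j^{-1}$, $\Psi(\alpha)=j^{-1}*\alpha*j$. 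These are the same correspondence in different clothing: since $A$ is commutative, $\Psi(\alpha)=m_A\circ(\alpha\ot j)\circ\Ad$, which under the identification $j(h)=1_B\ot h$ is exactly the paper's $\f_\alpha$, and your $\Phi(\f)$ is $\alpha_\f\ot 1_H$. What your packaging buys is that multiplicativity for $*$ and bijectivity are automatic (conjugation by a convolution-invertible element), a point the paper leaves implicit; what it costs is the Sweedler computation showing that conjugation by $j$ exchanges $\Ad$-equivariance with $B$-valuedness, which you have organised correctly (the cancelling pairs $\two{h}S(\three{h})$ and $\five{h}S(\six{h})$ are adjacent, so they contract to counits regardless of the spectator legs). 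Two cosmetic remarks: $j^{-1}=j\circ S$ needs only that $j$ is an algebra map, not commutativity of $A$; and injectivity of $j$ also follows directly from convolution invertibility plus the comodule map property, without invoking the normal form of the isomorphism $B\ot H\simeq A$.
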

\begin{proof}
For a trivial extension with $B$ in the centre of $A$, the cleaving map
gives the isomorphism $A\simeq B \ot H$ in $\qcr$, with coaction
$\delta = \id \ot \Delta$. 
This implies that $H$ is quasi-commutative, and therefore, cf. Example
\ref{ex:usualm_H},  $H$ is commutative with trivial $R$-form, so that $A$ is commutative as well.
As for the gauge group 
$\G_A=\Hom_{\A ^H}(\bh,  B \ot H)$, observe first that  the  braided Hopf algebra $\bh$ is isomorphic as a Hopf algebra to $H$. Indeed, since $R$ is trivial,   the product in $\bh$
equals that in $H$  and the braiding $\Psi^R_{\bh,\bh}$ is
trivial. Next, 
each $\f: H \ra B \ot H$ in $\G_A$
determines an algebra  map $\alpha_\f:= (\id \ot \varepsilon)\circ \f:
H \ra B$. Conversely, with any algebra map  $\alpha: H \ra B$,
one has a map $\f_\alpha:= (\alpha \ot \id)\circ \Ad :H \ra B \ot H$ (cf. \cite[Thm.5.4]{brz-tr}). 
It is easy to verify that $\f_\alpha$ is a morphism of $H$-comodules:
$$
(\f_\alpha \ot \id) \Ad(h)= \f_\alpha(\two{h}) \ot S(\one{h})\three{h}= 
\alpha(\three{h}) \ot S(\two{h})\four{h} \ot S(\one{h})\five{h}=
(\id \ot \Delta)\f_\alpha(h).
$$
It is also an algebra map being a composition of such maps:
$$
\f_\alpha : H \xrightarrow{~\Ad~ } H \ot H  \xrightarrow{~ \alpha \ot \id ~ } B  \ot H \, .
$$
One easily sees that $\alpha_{\f_\alpha}=\alpha$ and  
$\f_{\alpha_\f}= \f$, being $\f$ a comodule map, so that $(\f \ot \id) \Ad= (\id \ot \Delta)\f$.
\end{proof} 

We note that while the algebras in a trivial Hopf--Galois extension  $B\subseteq A\in \qcr$ are
  commutative, for cleft Hopf--Galois extensions this need not
  be the case, and their gauge group is in general not given by $\left( \{
   \alpha: \bh \ra B \mbox{ algebra maps} \} , *\right)$.  See Remark \ref{rem:sotwisted} later on.\\

Also for a principal $G$-bundle $\pi: P \ra M$ with $G$ abelian
the gauge group is isomorphic to the group of maps from $M$ to $G$. 
For Hopf--Galois extensions we have a similar result if
the Hopf-algebra is cocommutative. A coquasitriangular Hopf
algebra $(H,R)$ which is cocommutative is also commutative (cf. \eqref{gira-H} and comments after Example
\ref{trivialex}). Nevertheless, since the $R$-form
can be nontrivial, the algebra $A$ in the inclusion  
$B  \subseteq A$ can  be noncommutative. However the
gauge group does not  depend on $A$:
\begin{lem}\label{lem:Gab} 
Let $(H,R)$ be a coquasitriangular and cocommutative Hopf algebra, and
let $B=A^{coH} \subseteq A\in \qcr$ be a Hopf--Galois extension.  Then its gauge group $\G_A$ coincides with the group of algebra maps from $H$ to $B$ with the convolution product $*$:
$$
\G_A  \simeq  \left( \{ \f: H \ra B \mbox{ algebra maps} \} , *\right) .
$$
\end{lem}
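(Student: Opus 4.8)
The plan is to exploit the cocommutativity of $H$ to show that the braided Hopf algebra $\bh$ degenerates to the ordinary Hopf algebra $H$ (as an algebra) carrying the \emph{trivial} adjoint coaction, so that the requirement of being an $H$-equivariant algebra map $\bh\to A$ both forces the image to lie in $B$ and reduces to the requirement of being an ordinary algebra map $H\to B$.

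First I would record that a coquasitriangular and cocommutative $(H,R)$ is commutative (as noted after Example \ref{trivialex}, using \eqref{gira-H}). The key structural input is then that, since $H$ is cocommutative, the right adjoint coaction is trivial,
$$
\Ad(h)=\two{h}\ot S(\one{h})\three{h}=h\ot \1_H~,
$$
which follows by using cocommutativity to rewrite $\two{h}\ot S(\one{h})\three{h}=\one{h}\ot S(\two{h})\three{h}$ and then applying the antipode axiom on the last two legs. Correspondingly the braided product \eqref{hpr} collapses to the product of $H$, $h\hpr k=hk$, so that $\bh=(H,\cdot,\Ad)$ with $\Ad$ trivial (cf.\ Example \ref{ex:CZ}).

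With this in hand the statement is essentially immediate. Given $\f\in\G_A=\Hom_{\A ^H}(\bh,A)$, the comodule-map condition $\delta^A\circ\f=(\f\ot\id)\circ\Ad$ together with triviality of $\Ad$ gives $\delta^A(\f(h))=\f(h)\ot\1_H$ for all $h$, that is $\f(h)\in A^{coH}=B$; hence every $\f\in\G_A$ takes values in $B$. Since $\hpr=\cdot$, the algebra-map property $\f(h\hpr k)=\f(h)\f(k)$ is exactly $\f(hk)=\f(h)\f(k)$, so $\f$ is an ordinary algebra map $H\to B$. Conversely, any algebra map $\alpha:H\to B$, composed with the inclusion $B\hookrightarrow A$, is an $H$-comodule map (both $\bh$ and $B$ carrying the trivial coaction) and an algebra map for $\hpr$, hence lies in $\G_A$. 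These two assignments are mutually inverse, giving a bijection $\G_A\simeq\{\alpha:H\to B\ \text{algebra maps}\}$.

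Finally I would check that the group structures agree: the convolution product on $\G_A$ of Proposition \ref{prop:GA} uses the coproduct of $\bh$, which equals that of $H$, so $(\f*\g)(h)=\f(\one{h})\g(\two{h})$ computed with the product of $A$ restricted to $B$, which is precisely the convolution product on $\{\alpha:H\to B\}$. Thus the bijection is an isomorphism of groups. The only genuine point requiring care is the structural reduction $\bh=(H,\cdot,\text{trivial})$, i.e.\ that cocommutativity simultaneously trivializes the adjoint coaction and the braiding entering \eqref{hpr}; once this is established, the identification of the two groups follows formally.
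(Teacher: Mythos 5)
Your proposal is correct and follows essentially the same route as the paper's proof: cocommutativity trivializes the adjoint coaction, which both collapses the braided product $\hpr$ to the product of $H$ and forces the image of any $H$-equivariant map $\bh\to A$ into the coinvariants $B$, identifying $\G_A$ with the algebra maps $H\to B$. The explicit verification of the converse inclusion and of the compatibility of the convolution products, which you spell out, is left implicit in the paper but is exactly the same argument.
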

\begin{proof}
Since $H$ is cocommutative, the adjoint coaction $\Ad$ is trivial, so the product in $\bh$
equals that in $H$ (which is commutative due to 
coquasitriangularity) and the braiding $\Psi^R_{\bh,\bh}$ is
trivial. Thus, the associated braided Hopf algebra $\bh$ is isomorphic to $H$ as a Hopf algebra.
Triviality of the adjoint coaction implies that each
$H$-equivariant $\bbK$-linear map $\f : \bh\to A$ satisfies
$\delta \f (h) = \f(h) \ot 1$, that is the image of $\f$ is contained
in the subalgebra $B$ of coinvariants. In particular
$H$-equivariant algebra maps $\f \in \G_A=\Hom_{\A ^H}(\bh, A)$
are algebra maps $\f : \bh\to B$, then algebra maps $\f : H\to B$. 
\end{proof}

\begin{ex} \textit{The Hopf bundle.} 
Consider the $\O(U(1))$-Hopf-Galois extension $\O(S^2) \subset \O(SU(2))$.
By Lemma \ref{lem:Gab} its gauge group is given by
$$
\G_A= (\{ \f:  \O(U(1)) \ra \O(S^2) \mbox{ algebra maps} \}, *)~.
$$
Since  $\O (U(1))$ is
   linearly spanned by  group-like elements, the convolution product
equals the point-wise product and we obtain 
$
\G_A \simeq (\{ \f:  S^2 \ra U(1)  \}, \cdot)$, as expected.
\qed\end{ex}

\subsection{The gauge group of vertical automorphisms}~\\[.5em]
For any Hopf--Galois extension $B\subseteq A\in \qcr$ we show that 
$\Aut{A}$, defined as in the commutative case in \eqref{AVcomm},  is a group.
This uses properties of the (dual of the classical) translation map $\tau=t^*$ (cf. Example \ref{affinecase}) leading to the following: 
\begin{prop}\label{prop:Finv} Let $B=A^{coH}\subseteq A$ be an $H$-Hopf--Galois extension with $(H,R)$ coquasitriangular and $A\in \qcr$.
The $\bbK$-module 
$$ 
\Aut{A}:=\Hom_{_B\A^H}(A, A) =\{ \F \in \Hom_{\A^H}(A, A) \, |  \,\, \F_{|_B}=\id\}
$$
of left $B$-module, right $H$-comodule algebra morphisms is a group
with respect to the composition of maps 
$$
\F \cdot \mathsf{G}:=  \mathsf{G} \circ \F 
$$
for all  $\F , \mathsf{G} \in \Aut{A}$. For $\F\in \Aut{A}$ its 
inverse $\F^{-1} \in
\Aut{A}$  is given by
\begin{eqnarray}\label{inv-F}
\F^{-1}:=m\circ (\id\bot m )\circ (\id\bot F\bot_B\id)\circ (\id\bot
\tau)\circ \bdelta^A\: :  A&\!\longrightarrow \!& A\\ a &\!\longmapsto\!& \zero{a} \F (\tuno{\one{a}}) \tdue{\one{a}} 
~,~~~\nn
\end{eqnarray}
where $\tau={\chi^{-1}}{|_{_{1 \bot \underline{H}}}}$ is the translation map.
\end{prop}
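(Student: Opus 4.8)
The plan is to verify the group axioms, the only substantial point being the existence of two-sided inverses realised by the formula \eqref{inv-F}. Closure under the operation $\F\cdot\mathsf{G}=\mathsf{G}\circ\F$, associativity, and the neutrality of $\id_A$ are immediate, since a composite of left $B$-linear $H$-comodule algebra endomorphisms each restricting to $\id$ on $B$ is again of this type.

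My first move is to observe that the right-hand side of \eqref{inv-F} is, by its very shape, a composite of morphisms in the monoidal category $(\A^H,\bot)$, hence automatically an $H$-comodule algebra map. Indeed $\bdelta^A\colon A\to A\bot\bh$ is an algebra map by Proposition \ref{prop:deltaund}; the translation map $\tau\colon\bh\to A\bot_B A$ is an $H$-comodule algebra map (Theorem \ref{prop:canam} and the ensuing remark that $\tau=\can^{-1}|_{\1\bot\bh}$ is multiplicative); multiplication $m\colon A\bot A\to A$, together with its descent $m\colon A\bot_B A\to A$, is an $H$-comodule algebra map by Proposition \ref{qclemma} since $A\in\qcr$; and braided tensor products of such maps are again such by Proposition \ref{prop:tpr0}, the factor $\F\bot_B\id$ descending to $A\bot_B A$ precisely because $\F|_B=\id$ and $B$ is central by Remark \ref{remBinZ}. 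Thus $\F^{-1}$ is an $H$-comodule algebra map, and it fixes $B$ because $\tau(\1_H)=\1_A\ot_B\1_A$ and $\F(\1_A)=\1_A$ give $\F^{-1}(b)=b$ for $b\in B$; hence $\F^{-1}\in\Aut{A}$.

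Next I would check that $\F^{-1}$ is a left inverse, that is $\F^{-1}\circ\F=\id_A$ (which in the group law reads $\F\cdot\F^{-1}=\id_A$). The only ingredient is the elementary translation relation $\zero a\,\tuno{\one a}\ot_B\tdue{\one a}=\1_A\ot_B a$, which is just $\can^{-1}\circ\can=\id$ evaluated on $\1_A\ot_B a$ using left $A$-linearity of $\can^{-1}$. Since $\F$ is $H$-equivariant, $\delta^A(\F(a))=\F(\zero a)\ot\one a$, and since $\F$ is a left $B$-linear algebra map one gets
$$
\F^{-1}(\F(a))=\F(\zero a)\,\F(\tuno{\one a})\,\tdue{\one a}=\F\!\big(\zero a\,\tuno{\one a}\big)\,\tdue{\one a}=a,
$$
the last step applying $\F\ot_B\id$ to the relation above and then multiplying.

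The main obstacle is the opposite relation $\F\circ\F^{-1}=\id_A$, where the finer translation identities enter. My preferred route is to show that $\g\colon\bh\to A$, $h\mapsto\F(\tuno h)\,\tdue h$ (so that $\F^{-1}(a)=\zero a\,\g(\one a)$) is the convolution inverse in $\G_A$ of the gauge transformation $\f_\F\colon h\mapsto\tuno h\,\F(\tdue h)$, for which $\F(a)=\zero a\,\f_\F(\one a)$ (again from the relation above). The identification $\g=\bar{\f_\F}=\f_\F\circ\bs$ is exactly \eqref{tauS} combined with the quasi-commutativity \eqref{qcomm2} of $A$; equivalently \eqref{tauS2} records the convolution identity $(\f_\F\circ\bs)*\f_\F=\varepsilon\,\1_A$ already at the level of $A\bot_B A$. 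Granting this, and using that $\F^{-1}$ is $H$-equivariant, a short computation collapses $\F(\F^{-1}(a))=\zero a\,\bar{\f_\F}(\one a)\,\f_\F(\two a)$ to $\zero a\,\varepsilon(\one a)=a$ via the antipode property of $\G_A$ (Proposition \ref{prop:GA}). One may also bypass \eqref{tauS}--\eqref{tauS2} entirely: applying the left-inverse identity to $\F^{-1}\in\Aut{A}$ shows $\F^{-1}$ is injective, while $\F^{-1}\circ\F=\id_A$ shows it is surjective, so $\F^{-1}$ is bijective with right inverse $\F$, whence $\F\circ\F^{-1}=\id_A$ as well. Either way $\F^{-1}$ is the required two-sided inverse and $\Aut{A}$ is a group.
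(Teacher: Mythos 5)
Your proposal is correct, and for most of its length it coincides with the paper's own proof: the monoid structure of $\Aut{A}$, the membership $\F^{-1}\in\Aut{A}$ obtained by reading \eqref{inv-F} as a composite of morphisms in $(\A^H,\bot)$ (with the same citations of Propositions \ref{prop:deltaund}, \ref{qclemma}, \ref{prop:tpr0} and Theorem \ref{prop:canam}), and the left-inverse identity $\F^{-1}\circ\F=\id_A$ deduced from $\zero{a}\,\tuno{\one{a}}\ot_B\tdue{\one{a}}=\1_A\ot_B a$ are exactly the paper's steps. The divergence is in the hard direction $\F\circ\F^{-1}=\id_A$, where you offer two routes. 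Your first route, through $\G_A$, is the paper's computation in convolution language: the paper derives $\F(a)=\zero{a}\,\tuno{\one{a}}\,\F(\tdue{\one{a}})$, substitutes $a\mapsto\F^{-1}(a)$, and collapses the result using quasi-commutativity and \eqref{tauSexp2}; your identification $\g=\f_\F\circ\bs=\bar{\f_\F}$ via \eqref{tauS} and \eqref{qcomm2}, followed by the antipode property of Proposition \ref{prop:GA}, runs through the same chain of identities. (One point you should make explicit there: Proposition \ref{prop:GA} applies to $\f_\F$ only once you know $\f_\F=m\circ(\id\bot_B\F)\circ\tau$ is an $H$-equivariant \emph{algebra} map, hence an element of $\G_A$; this follows by the same composition argument you used for $\F^{-1}$, since $\tau$ is an algebra map by the corollary to Theorem \ref{prop:canam}.) Your second route is genuinely different and more elementary: since $\F\mapsto\F^{-1}$ lands in $\Aut{A}$ and yields a one-sided inverse, the standard monoid-with-left-inverses argument --- apply the left-inverse identity to $\F^{-1}$ itself to get injectivity, use $\F^{-1}\circ\F=\id_A$ for surjectivity, and conclude that $\F$ is the set-theoretic inverse of the bijection $\F^{-1}$ --- produces the two-sided inverse with no further computation, bypassing the lemma establishing \eqref{tauS} and \eqref{tauS2} altogether. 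What the paper's route buys is the identities themselves, which have independent geometric content (they dualize the classical relations $t(q,p)=t(p,q)^{-1}$ and $t(q,p)\,t(p,q)=e$ of Remark \ref{rem:class}) and an explicit verification that the closed formula \eqref{inv-F} is two-sidedly inverse; what your bypass buys is economy: the group property of $\Aut{A}$ already follows from the soft categorical facts plus pure monoid theory.
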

\begin{proof} 
The reversed composition  order $\F \cdot \mathsf{G}= \mathsf{G} \circ \F$ stems from 
the contravariant property of the pull-back
$\varphi\mapsto\F=\varphi^*$
used in the commutative case $A=\O(P)$. The expression for the inverse
map $F^{-1}={\varphi^*}^{-1}$ is induced from that of $\varphi^{-1}$.
The map $\F^{-1}$ is well-defined because $F\bot_B \id$ is well-defined due to the $B$-linearity of $\F$.
We show  $\F^{-1} \in \Aut{A}$. Clearly $\F^{-1}_{|_B}=\id$ since $\F$ and $\tau$ are unital; 
$\F^{-1}$ is an $H$-comodule algebra map because composition of
$H$-comodule algebra maps (for the product $m_A:A\bot A\to A$ see
Proposition \ref{qclemma},  for $m: A\bot_B A\to A$ recall the proof
concerning the canonical map $\chi$ in Theorem {\ref{prop:canam}}, for
$\bdelta^A: A\to A\bot \bh$ see Lemma \ref{prop:deltaund}).

We recall the identity $\zero{a}\tau(\one{a}) = \can^{-1} \circ \can(1 \ot_B a)=1
\ot_B a$, for all $a\in A$. 
To show that $\F^{-1} \circ \F= \id$ we evaluate $\F^{-1}$, as from
definition \eqref{inv-F}, on $F(a)$ and use that $F$ is
$H$-equivariant and that it is an algebra map:
$$
\F^{-1}(\F(a))=\F(\zero{a}) \F (\tuno{\one{a}}) \tdue{\one{a}}=
\F(\zero{a}  \tuno{\one{a}}) \tdue{\one{a}}=\F(1) a=a~.
$$
To show that $\F \circ \F^{-1}= \id$ requires property  \eqref{tauS2} of the
translation map.
Firstly,  by applying $m\circ (\id \ot_B \F)$ to the identity $\zero{a}
\tau(\one{a}) =1 \ot_B a$ we obtain
$$
\F(a)= \zero{a} \tuno{\one{a}} \F(\tdue{\one{a}}) ~.
$$
Then we replace $a$ with
$\F^{-1}(a)$, use that $\F^{-1}$ is a comodule map and obtain
$$
(\F \circ \F^{-1})(a) = 
\F (\F^{-1}(a))
= \zero{a} \F (\tuno{\one{a}}) \tdue{\one{a}}
\tuno{ \two{a} } 
\F \big(\tdue{\two{a} }\big)~.
$$
Next, the quasi-commutativity of  $A$ and the $H$-comodule algebra map property of  $\F$ give
\begin{align*}
\F (\F^{-1}(a))
&= \zero{a}  \zero{\tdue{\one{a}}}
\zero{\tuno{ \two{a} } }\F (\zero{\tuno{\one{a}}})
\F \big(\tdue{\two{a} }\big)
\ru{\one{\tuno{\one{a}}}}{\one{\tdue{\one{a}}}\one{\tuno{ \two{a}}}} 
\\
&= \zero{a}  \zero{\tdue{\one{a}}}
\zero{\tuno{ \two{a} } }\F \big(\zero{\tuno{\one{a}}}\tdue{\two{a} }\big)
\ru{\one{\tuno{\one{a}}}}{\one{\tdue{\one{a}}}\one{\tuno{ \two{a}}}} ~.
\end{align*}
Finally, property \eqref{tauSexp2} of the translation map  implies
\begin{align*} 
\F (\F^{-1}(a))
= \zero{a} \varepsilon(\one{a}) 
=a \, .
\end{align*}
This ends the proof. \end{proof}

\begin{rem}\label{homotopy}
By definition, two Hopf--Galois extensions $A,A' \in \A^H$ of a fixed
algebra $B$ are isomorphic provided there exists an isomorphism of
$H$-comodule algebras $A \ra A'$. This is the algebraic
  counterpart for noncommutative principal bundles of the geometric
  notion of isomorphism of principal $G$-bundles with fixed base
  space.
As in the geometric case this notion is relevant in the homotopy
classification of noncommutative principal bundles,
see e.g. \cite[\S 7.2]{kassel-review}. 
In the coquasitriangular and quasi-commutative context of the present
paper, if $A,A' \in \qcr$ are isomorphic  
via $\omega: A \ra A'$, then
the groups $\Aut{A}$ and $\Aut{A'}$ are isomorphic via
$$
\Aut{A} \ra \Aut{A'} \; , \quad \F \mapsto \omega \circ \F \circ \omega^{-1} \, .
$$
Indeed, even if in general $\omega$ is not the identity on $B$, $\omega (B) \subseteq B$ being $\omega$ a morphism of $H$-comodules. Thus $\omega \circ \F \circ \omega^{-1}|_B= \id$ and
$\omega \circ \F \circ \omega^{-1} \in \Aut{A'}$ as claimed.
Therefore the gauge group $\G_A$ of an $H$-Hopf--Galois extension depends
only on the isomorphism class of the extension, rather than on the single representative.
\end{rem}

\begin{ex} \textit{Galois field extensions.}  
Let $\mathbb{E}$ be a field, $\bbK \subseteq \mathbb{E}$ and $G=\{g_i\}$
a finite group  acting on $\mathbb{E}$ as 
automorphisms of $\mathbb{E}$. Let 
$\mathbb{F}\supseteq \bbK$ be the fixed field of the $G$ action. By
Artin's theorem if the $G$-action is faithful, $\mathbb{E}$ is a Galois
extension of $\mathbb{F}$ and $G$ is its Galois group (the group of authomorphisms of $\mathbb{E}$ that leave $\mathbb{F}$ invariant). The $G$ action $a\mapsto g_i(a)$, $a\in\mathbb{E}$, induces a coaction
of the  dual $(\bbK G)^*$ of the group algebra $\bbK G$, 
$\delta: \mathbb{E} \ra \mathbb{E} \ot (\bbK G)^*$, $a \mapsto \sum_i
g_i (a) \ot \beta_i$, where
$\{ \beta_i\}$ is the basis of  $(\bbK G)^*$ dual to
the basis $\{ g_i\}$ of $\bbK G$. In \cite[\S 8.1.2]{mont} it is proven 
that  $\mathbb{E}$ is a  Galois field extension of $\mathbb{F}$  with Galois group $G$ if and only if the $\bbK$-algebra $\mathbb{E}$
 is a Hopf--Galois extension of $\mathbb{F}= \mathbb{E}^{co(\bbK
   G)^{\!*}}$. In this case consider the trivial coquasitriangular structure on  $(\bbK G)^*$.
The gauge group $\mathrm{Aut}_{\mathbb{F}}(\mathbb{E})$ consists of
maps $\F \in G$ which are morphisms of  $(\bbK G)^*$-comodules, $\delta \F = (\F \ot \id) \delta$. This is equivalent to requiring  
$\F g_i=g_i \F$ for each $i$. Thus 
  $\mathrm{Aut}_{\mathbb{F}}(\mathbb{E})= Z(G)$,
 the center of the Galois group.
\qed\end{ex}

\begin{ex}\label{ex:graded}
\textit{Graded algebras.} Let $G$ be a  group, with neutral element $e$, and let $H=\bbK G$ be its group algebra. 
An algebra $A$ 
is $G$-graded, that is $A=\oplus_{g \in G} A_g$ and $A_g A_{h} \subseteq A_{gh}$ for all $g,h \in G$, if and only if $A$ is a right $\bbK G$-comodule  algebra with coaction 
$\delta: A \ra A \ot \bbK G$, $a \mapsto \sum a_g \ot g $ for $a= \sum a_g$, $a_g \in A_g$.
Moreover,
the algebra $A$ is strongly $G$-graded, that is $A_g A_h =A_{gh}$,  
if and only if $A_e=A^{co(\bbK G)} \subseteq A$ is Hopf--Galois (see e.g. \cite[Thm.8.1.7]{mont}).
One can easily see that 
$$\Hom_{_{A_e} \A^{\bbK G}}(A, A) =\{ \F:A \ra A  \mbox{ algebra maps} ~ |~ \F|_{A_e} = \id \, ,\; \F(A_g) \subseteq A_g ~\} ~.$$
Let now $H=\bbK G$ be coquasitriangular and $A$ be quasi-commutative.
Then Proposition \ref{prop:Finv} shows that $\Hom_{_{A_e} \A^{\bbK G}}(A, A)$ is a group,
the gauge group $\mathrm{Aut}_{A_e}(A)$ of the Hopf--Galois extension
$A_e \subseteq A$. Notice that $H=\bbK G$ coquasitriangular implies
$H$ commutative and hence 
$G$  abelian (cf. remark after Example \ref{trivialex}).
For $G=\IZ$, with $H=\IC \IZ= \O(U(1))$, the Hopf--Galois extension $A_e\subseteq A$ is a noncommutative principal $U(1)$-bundle. 
Examples with  $G=\IZ^n$, $H=\IC \IZ^n= \O(\mathbb{T}^n)$, and $A_e=\IC$ include Example \ref{ex:toroc}
and Example \ref{ex:torotwisted} (noncommutative principal $U(1)^n$ bundles).
\qed\end{ex}
\begin{ex}\textit{Torus bundle over a point.}\label{ex:toroc} 
(Its noncommutative deformation is in Example \ref{ex:torotwisted}.)
Let
$\O (\mathbb{T}^n)$ be  the  commutative algebra of polynomial
functions on the $n$-torus  with generators $t_j, \, t_j^*$
satisfying $t_j t_j^*=1= t_j^* t_j$ (no sum on $j$) for
$j=1,\dots, n$. It is a $*$-Hopf algebra with costructures 
\beq
\Delta(t_{j})=t_{j}\ot t_{j}\, ,\quad \varepsilon(t_{j}) = 1 \, , \quad S(t_{j})=t^{*}_{j} \, .
\eeq
Hence $A=\O (\mathbb{T}^n)$, with coaction $\Delta: A \ra A \ot \O (\mathbb{T}^n)$, is a Hopf--Galois extension of $B=\IC$.
Vertical automorphisms $\F \in \Aut{A}=\Hom_{\A^{\O (\mathbb{T}^n)}}(A, A)$ are determined by their action on the generators since they are algebra maps. In turn, $\O (\mathbb{T}^n)$-equivariance gives
$\F(t_i)=(\varepsilon\ot \id)\Delta(\F(t_i))=\varepsilon(\F(t_i))\,t_i 
$, and similarly for $t^*_i$, so that $\F\in \Hom_{\A
  ^{\O (\mathbb{T}^n)}}(A, A) $ is
determined by $\lambda_i=\varepsilon(F(t_i))$ and $\lambda^*_i=\varepsilon(F(t^*_i))$ with $\lambda_i \lambda_i^*=1$. Thus the gauge group is $\Aut{A}\simeq\mathbb{T}^n$. The result is in agreement with Lemma \ref{lem:Gab} 
which would give:
$$
\G_A= (\{ \f: \O (\mathbb{T}^n) \ra \IC \mbox{ algebra maps} \}, *)
$$
that is, $\G_A$ as the set of characters of the algebra $\O (\mathbb{T}^n)$, hence 
$\G_A = \mathbb{T}^n$.

This example can be directly generalised to a $G$-bundle over a point, with $G$
any affine algebraic group, that is a subgroup of $GL(n,\IC)$.
\qed
\end{ex}

\subsection{Equivalence of the gauge groups}~\\[0.5em]
We show the equivalence $\G_A\simeq \Aut{A}$. 

\begin{prop}\label{prop:theta} Let $(H,A)$ be a coquasitriangular Hopf
 algebra,  and $B=A^{coH}\subseteq A$ a Hopf--Galois extension, where
  $A\in \qcr$ is a quasi-commutative $H$-comodule algebra.
The groups $(\G_A, *)$  and $(\Aut{A}, \cdot)$ are isomorphic via the map
\begin{align}\label{isoGAVA}
{\underline{\theta}}_{\,A} : \G_A  & \longrightarrow  \Aut{A}  \\
\f & \mapsto  \F_{\f} := m_A \circ (\id_A \bot \f) \circ \bdelta^A  \quad : a   \mapsto \zero{a} \f(\one{a}) ~,\nn 
 \end{align}
with inverse
$$~~ \qquad ~~~~\F \mapsto \f_{\F} := m_A \circ (\id_A \bot_B \F) \circ \tau \quad : h \mapsto  \tuno{h} \F(\tdue{h})~   .
$$
\end{prop}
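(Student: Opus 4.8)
The plan is to check four things in turn: that $\F_\f$ lands in $\Aut{A}$, that $\f_\F$ lands in $\G_A$, that the two assignments are mutually inverse, and finally that ${\underline{\theta}}_{\,A}$ is multiplicative; the last point is the only delicate one.

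\emph{Well-definedness.} The map $\F_\f=m_A\circ(\id_A\bot\f)\circ\bdelta^A$ is a composite of $H$-comodule algebra maps: $\bdelta^A\colon A\to A\bot\bh$ is one by Proposition~\ref{prop:deltaund}, $\id_A\bot\f$ is one by Proposition~\ref{prop:tpr0} (as $\f$ is a morphism in $\A^H$), and $m_A\colon A\bot A\to A$ is one by Proposition~\ref{qclemma} since $A\in\qcr$. Hence $\F_\f\in\Hom_{\A^H}(A,A)$, and for $b\in B$ one has $\bdelta^A(b)=b\bot 1_H$, so $\F_\f(b)=b\,\f(1_H)=b$ by unitality of $\f$; thus $\F_\f\in\Aut{A}$. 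Dually, $\f_\F=m\circ(\id\bot_B\F)\circ\tau$ is a composite $\bh\to A\bot_B A\to A\bot_B A\to A$ of $H$-comodule algebra maps: $\tau$ is one as recorded right after Theorem~\ref{prop:canam}, $m\colon A\bot_B A\to A$ is one by (the proof of) Theorem~\ref{prop:canam}, and $\id\bot_B\F$ is the descent of the algebra map $\id_A\bot\F$ (Proposition~\ref{prop:tpr0}), well defined on the quotient because $\F|_B=\id$ sends $b\bot 1-1\bot b$ to itself. Therefore $\f_\F\in\Hom_{\A^H}(\bh,A)=\G_A$.

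\emph{Mutual inverse.} Writing $\f_{\F_\f}(h)=\tuno{h}\,\zero{\tdue{h}}\,\f(\one{\tdue{h}})$ and applying $m$ to the first two legs of \eqref{p4}, together with $\tuno{k}\,\tdue{k}=\varepsilon(k)1_A$, gives $\tuno{h}\zero{\tdue{h}}\ot\one{\tdue{h}}=1_A\ot h$ in $A\ot\bh$; hence $\f_{\F_\f}(h)=\f(h)$. Conversely $\F_{\f_\F}(a)=\zero{a}\,\tuno{\one{a}}\,\F(\tdue{\one{a}})$, and applying $m\circ(\id\bot_B\F)$ to the relation $\zero{a}\,\tau(\one{a})=1\bot_B a$ recalled in the proof of Proposition~\ref{prop:Finv} yields $\F_{\f_\F}(a)=\F(a)$. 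Thus ${\underline{\theta}}_{\,A}$ is a bijection with the stated inverse.

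\emph{Multiplicativity (the crux).} Since the group law on $\Aut{A}$ is the reversed composition, it remains to prove $\F_{\f*\g}=\F_\g\circ\F_\f$. On the one hand $\F_{\f*\g}(a)=\zero{a}\,\f(\one{a})\,\g(\two{a})$. On the other hand, using that $\F_\g$ is an algebra map and the equivariance $\delta^A(\f(h))=\f(\two{h})\ot S(\one{h})\three{h}$, one expands
\[
\F_\g\circ\F_\f(a)=\F_\g(\zero{a})\,\F_\g(\f(\one{a}))=\zero{a}\,\g(\one{a})\,\f(\three{a})\,\g\big(S(\two{a})\four{a}\big).
\]
The identity to be proven therefore reads
\[
\zero{a}\,\g(\one{a})\,\f(\three{a})\,\g\big(S(\two{a})\four{a}\big)=\zero{a}\,\f(\one{a})\,\g(\two{a}).
\]
I would establish it by first using quasi-commutativity \eqref{qcomm1}--\eqref{qcomm2} to interchange $\g(\one{a})$ and $\f(\three{a})$ at the cost of an $R$-factor, then recombining the two $\g$-terms via the algebra-map property of $\g$, and finally collapsing by the antipode axiom \eqref{bantipode} and the normalization \eqref{Rnormalized} of $R$. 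The main obstacle is precisely this computation: interchanging the two comodule-valued factors forces one to carry along the adjoint coaction on the outputs of $\f$ and $\g$ together with the accompanying $R$-form factors, and checking that these conspire, via the Yang--Baxter and antipode identities, to leave exactly $\zero{a}\f(\one{a})\g(\two{a})$ is the delicate bookkeeping step. (In the commutative case with trivial $R$ everything commutes, so the two $\g$-factors combine to $\g(\one{a}S(\two{a})\four{a})$, which the antipode axiom collapses to leave $\zero{a}\f(\one{a})\g(\two{a})$ at once.)
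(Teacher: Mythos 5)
Your first two steps (well-definedness and mutual inversion) are correct and complete: the composition-of-comodule-algebra-maps argument for $\F_\f$ and $\f_\F$ is exactly the content the paper itself adds on top of the known linear-level result, and your verification of $\f_{\F_\f}=\f$ and $\F_{\f_\F}=\F$ via \eqref{p4} and the identity $\zero{a}\tau(\one{a})=1\ot_B a$ is sound. The genuine gap is the multiplicativity step, which you yourself flag as ``the crux'' and then do not carry out: reducing $\F_{\f*\g}=\F_\g\circ\F_\f$ to the identity
\begin{equation*}
\zero{a}\,\g(\one{a})\,\f(\three{a})\,\g\big(S(\two{a})\four{a}\big)=\zero{a}\,\f(\one{a})\,\g(\two{a})
\end{equation*}
and outlining a plan (quasi-commutativity swap, recombination, Yang--Baxter, antipode) is not a proof, and the plan as stated is also subtly harder than you suggest: $\g$ is an algebra map for the braided product $\hpr$ of $\bh$, not for the product of $H$, so ``recombining the two $\g$-terms'' produces $\g$ of an $\hpr$-product carrying further $R$-factors, and the bookkeeping you defer is precisely where all the difficulty lives.

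The gap is real but avoidable, because you chose the wrong expansion of $\F_\g\circ\F_\f$. Instead of using that $\F_\g$ is an algebra map (which creates the four-factor expression above), apply the \emph{definition} of $\F_\g$ to the element $\F_\f(a)$ and use the $H$-equivariance of $\F_\f$, which you already established in your well-definedness step:
\begin{equation*}
\F_\g\big(\F_\f(a)\big)=\zero{\big(\F_\f(a)\big)}\;\g\Big(\one{\big(\F_\f(a)\big)}\Big)
=\F_\f(\zero{a})\,\g(\one{a})=\zero{a}\,\f(\one{a})\,\g(\two{a})=\zero{a}\,(\f*\g)(\one{a})=\F_{\f*\g}(a)\,.
\end{equation*}
No quasi-commutativity, Yang--Baxter identity or braided antipode is needed here; multiplicativity of $\theta_A$ holds already at the level of $H$-comodule maps. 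This is in fact the route the paper takes: it cites the linear-level group isomorphism of Brzezi\'nski \cite[\S 5]{brz-tr} (whose multiplicativity proof is exactly the display above) and then only checks, as you do, that algebra maps are exchanged with algebra maps. With this replacement your argument becomes a complete, self-contained proof; as written, it is not.
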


\begin{proof}
As mentioned after equation \eqref{AVcomm}, without the requirement that elements of 
$\G_A$  and $\Aut{A}$ are
algebra maps, the group isomorphism was proven in
\cite[\S 5]{brz-tr}  using the linear map $\theta_A: \f\mapsto \F_\f=m_A \circ (\id_A \ot \f) \circ \delta^A$. 
When $A\in \qcr$ we restrict to gauge transformations that are algebra
maps. Since $\theta_A$ restricts as a linear map to
${\underline{\theta}}_{\,A}$ in \eqref{isoGAVA} we  just have to
show that when $\f$ is an algebra map, the corresponding $\F_\f$ is an
algebra map and vice versa. This is so because $\F_\f$ is the composition of
the algebra maps $m_A$, $\id_A\bot \f$ and $\bdelta^A$, and similarly,
for $\f_F$.
\end{proof}
\begin{rem}

When the Hopf algebra $H$ and the $H$-comodule algebra $A$ are both  equipped with  compatible $*$-structures,  that is such that the coaction is a $*$-algebra map, the morphisms
 which constitute the gauge group $\G_A \simeq \Aut{A}$ will also be required to be  compatible with the $*$-structures.
\end{rem}

\section{Deformations by 2-cocycles}
A general theory of Drinfeld-twist deformation of Hopf--Galois extensions was developed in \cite{ppca}.
We specialise this theory to  coquasitriangular Hopf algebras (so that the canonical map is an algebra map) and 
study the corresponding gauge groups in the context of the theory presented in the previous section.

\subsection{Twisting comodule algebras and coalgebras by  2-cocycles}\label{sec:twists-hopf}~\\[.5em]
We first recall some relevant results from the general theory of 2-cocycle
deformations of algebras and comodules  \cite{drin1,drin2,doi}; we follow \cite[\S 2.2]{ppca}.

Let $H=(H,m,1_H,\Delta,\varepsilon,S)$ be a Hopf algebra.
\begin{defi}
A unital convolution invertible 2-cocycle, or simply a \textbf{2-cocycle}, on $H$ is 
a $\bbK$-linear map $\cot:H \otimes H \ra \bbK$ which is unital, that is  $\co{h}{\1}= \varepsilon(h) = \co{\1}{h}$, for all  $ h\in H$, 
invertible for the convolution product and satisfies the
 $2$-cocycle condition
\beq\label{lcocycle}
\co{\one{g}}{\one{h}} \co{\two{g}\two{h}}{k} =  \co{\one{h}}{\one{k}} \co{g}{\two{h}\two{k}}~,
\eeq
for all $g,h, k \in H$. 
\end{defi}
For $\cot$ a 2-cocycle, we denote by   $\bar\gamma : H\otimes H\to\bbK$ its  convolution inverse. The condition \eqref{lcocycle} 
can be equivalently written in terms of $\bar\gamma$ as 
\beq\label{ii} 
\coin{\one{g}\one{h}}{k} \coin{\two{g}}{\two{h}}  =  \coin{g}{\one{h}\one{k}} \coin{\two{h}}{\two{k}}\,, 
\eeq
for all $g,h, k \in H$.

Given a  2-cocycle $\cot$ on $H$, the map $m_\cot :=\cot * m * \bar{\cot}$, 
\beq\label{hopf-twist}
m_{\cot} (h \ot k):= h \mt k:= \co{\one{h}}{\one{k}} \,\two{h}\two{k}\, \coin{\three{h}}{\three{k}}~, 
\eeq
for $h,k \in H,$ defines an associative product on (the $\bbK$-module underlying) $H$. The resulting algebra 
$\hg:=(H,m_\cot,\1_H)$ is a Hopf algebra with coproduct $\Delta$ and
counit $\varepsilon$ that are those of $H$, 
and with antipode  $S_\cot:= u_\cot *S*\bar{u}_\cot$, where 
\begin{flalign}\label{uxS}
u_\cot:  H\longrightarrow \bbK ~, ~~ h\longmapsto \co{\one{h}}{S(\two{h})}  ~,\\
\bar{u}_\cot: H\longrightarrow \bbK~,~~ h \longmapsto \coin{S(\one{h})}{\two{h}} ~, \nn
\end{flalign}
(one  the convolution inverse of the other). 

The passage from $H$ to $\hg$ affects also the category $\M^H$ of (right) $H$-comodules. 
Since the comodule condition \eqref{eqn:Hcomodule} only involves the coalgebra structure of $H$, and  
$\hg$ coincides with $H$ as a coalgebra, any $H$-comodule $V\in\M^H$ with coaction $\delta^V$ 
is a right $\hg$-comodule when $\delta^V$ is thought of as a map $\delta^V:V \ra V \ot \hg$.  
When thinking of $V$ as an object in $\M^{\hg}$ we denote it by
$V_\cot$ and the coaction by $\delta^{V_\cot} : V_\cot \to V_\cot \otimes \hg$. 
For the same reason, any morphism $\psi : V\to W $ in $ \M^H$
can be thought as a morphism $\psi  : V_\cot\to W_\cot$ in $\M^{\hg}$.
Indeed the (identity) functor 
\begin{equation}\label{functGamma}
\Gamma : \M^H \to\M^{\hg}~,
\end{equation}
defined on objects by $\Gamma(V):=V_\cot$ and on morphisms by $\Gamma(\psi):=\psi$, is an equivalence of categories. The convolution inverse $\bar\cot$ twists back $\hg$ to $(\hg)_{\bar{\cot}}=H$ and $V_\cot$ to $(V_\cot)_{\bar{\cot}}=V$.

We denote by 
$(\M^{\hg},\ot^\cot)$ the monoidal category of comodules for 
the Hopf algebra $\hg$. 
Explicitly, for all objects  $V_\cot,W_\cot\in  \M^{\hg}$
(with coactions  $\delta^{V_\cot}$ and $\delta^{W_\cot}$), the right 
$H^\cot$-coaction on $V_\cot\ot^\cot W_\cot $ is
given,  following  \eqref{deltaVW}, by 
\begin{align}\label{deltaVWcot}
\delta^{V_\cot \ot^\cot W_\cot} :V_\cot\otimes^\cot W_\cot 
                                                        \longrightarrow  V_\cot \otimes^\cot W_\cot\otimes \hg~, 
                                                        \quad
 v\otimes^\cot w \longmapsto  \zero{v}\otimes^\cot \zero{w} \otimes \one{v}\mt\one{w} ~.
\end{align}
\begin{prop}\label{pro:funct}
The functor $\Gamma : \M^H \to\M^{\hg}$ together with the natural isomorphism   $\varphi : \otimes^\cot \circ
(\Gamma\times\Gamma)\Rightarrow \Gamma\circ \otimes$ given for objects $V,W\in \M^H$ by the isomorphism of ${\hg}$-comodules
\begin{align}\label{nt}
\varphi_{V,W}: V_\cot \ot^\cot W_\cot \longrightarrow  (V \ot W)_\cot  \, ,
\quad
v \ot^\cot w \longmapsto \zero{v} \ot \zero{w} ~\coin{\one{v}}{\one{w}} ~,
\end{align}
is an equivalence between the  monoidal categories $(\M^H, \ot)$ and 
$(\M^{\hg}, \ot^\cot)$.  
\end{prop}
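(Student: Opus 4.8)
The plan is to promote the equivalence $\Gamma$ to a monoidal equivalence by exhibiting $(\Gamma,\varphi,\varphi_0)$ as a strong monoidal functor with $\varphi$ a natural isomorphism; a quasi-inverse is then supplied by $\bar\cot$-twisting, as already noted after \eqref{functGamma}. Since both $(\M^H,\ot)$ and $(\M^{\hg},\ot^\cot)$ have trivial associators, the coherence data reduce to: (i) each $\varphi_{V,W}$ is an isomorphism of $\hg$-comodules; (ii) $\varphi$ is natural in $V$ and $W$; (iii) $\varphi$ is compatible with the (trivial) associativity constraints; and (iv) $\varphi$ is compatible with the unit, where one takes $\varphi_0=\id_\bbK$, legitimate because $\bbK$ carries the coaction $\eta_H=\eta_{\hg}$ on both sides.

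First I would verify (i). The candidate inverse of $\varphi_{V,W}$ is $v\ot w\mapsto \zero{v}\ot^\cot\zero{w}\,\co{\one{v}}{\one{w}}$, and both round trips collapse to the identity once one uses that $\cot*\bar\cot=\bar\cot*\cot$ equals the convolution unit $\varepsilon\ot\varepsilon$ on $H\ot H$. For $\hg$-equivariance one compares the two coactions on $\varphi_{V,W}(v\ot^\cot w)$: the target $(V\ot W)_\cot$ carries the undeformed coaction, producing the ordinary product $\one{v}\one{w}$, whereas the source $V_\cot\ot^\cot W_\cot$ produces, via \eqref{deltaVWcot}, the twisted product $\one{v}\mt\one{w}$. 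Expanding $\mt$ by \eqref{hopf-twist} and contracting the emerging $\bar\cot*\cot=\varepsilon\ot\varepsilon$ factor against the $\coin{\one{v}}{\one{w}}$ coming from $\varphi$ makes the two expressions coincide. Notably this step needs only convolution-invertibility of $\cot$, not the cocycle identity.

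Naturality (ii) is the equality $\varphi_{V',W'}\circ(\psi\ot^\cot\phi)=(\psi\ot\phi)\circ\varphi_{V,W}$ for $\psi\colon V\to V'$ and $\phi\colon W\to W'$ in $\M^H$; evaluating on $v\ot^\cot w$ and commuting $\psi,\phi$ past the coaction legs by their $H$-comodule property, both sides become $\psi(\zero{v})\ot\phi(\zero{w})\,\coin{\one{v}}{\one{w}}$. The unit compatibility (iv) reduces, on $\varphi_{\bbK,W}$ and $\varphi_{W,\bbK}$, to $\coin{\1_H}{\one{w}}=\varepsilon(\one{w})=\coin{\one{w}}{\1_H}$, which is the unitality of $\bar\cot$ inherited from that of $\cot$; this recovers the unitors.

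The main obstacle, and the unique place where the $2$-cocycle condition enters, is the associativity coherence (iii): with trivial associators this is the commutativity of the square comparing $\varphi_{V\ot W,U}\circ(\varphi_{V,W}\ot^\cot\id_U)$ with $\varphi_{V,W\ot U}\circ(\id_V\ot^\cot\varphi_{W,U})$ as maps $V_\cot\ot^\cot W_\cot\ot^\cot U_\cot\to(V\ot W\ot U)_\cot$. Evaluating both composites on $v\ot^\cot w\ot^\cot u$ leaves $\zero{v}\ot\zero{w}\ot\zero{u}$ times a scalar, where the outer $\varphi$ feeds on the tensor-product coaction \eqref{deltaVW} and hence produces a product of two coaction legs. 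Writing $g,h,k$ for the coaction legs $\one{v},\one{w},\one{u}$, the first composite yields the scalar $\coin{\one{g}\one{h}}{k}\,\coin{\two{g}}{\two{h}}$ and the second yields $\coin{g}{\one{h}\one{k}}\,\coin{\two{h}}{\two{k}}$; their equality is exactly the cocycle relation \eqref{ii} for $\bar\cot$ (the $\bar\cot$-form of \eqref{lcocycle}). This identifies the coherence square with the $2$-cocycle condition, and with (i)--(iv) in hand $(\Gamma,\varphi,\varphi_0)$ is a monoidal equivalence. The anticipated difficulty is purely bookkeeping: tracking which coaction leg is consumed by the inner versus the outer $\varphi$, since under comodule coassociativity the inner $\varphi$ captures the higher-indexed legs.
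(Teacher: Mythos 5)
Your proposal is correct. The paper does not actually prove Proposition \ref{pro:funct} --- it is recalled from the literature on 2-cocycle deformations (following \cite[\S 2.2]{ppca} and the references \cite{drin1,drin2,doi}) --- so there is no in-paper argument to compare against; your direct verification is exactly the standard one. In particular you locate the content correctly: the two round-trip computations and the $\hg$-equivariance of $\varphi_{V,W}$ use only $\cot\ast\bar\cot=\bar\cot\ast\cot=\varepsilon\ot\varepsilon$ and the definition \eqref{hopf-twist} of $\mt$, the unit coherence uses only unitality of $\bar\cot$, and the single place the 2-cocycle identity enters is the associativity square, whose two scalars $\coin{\one{g}\one{h}}{k}\,\coin{\two{g}}{\two{h}}$ and $\coin{g}{\one{h}\one{k}}\,\coin{\two{h}}{\two{k}}$ are equal precisely by \eqref{ii} (the $\bar\cot$-form of \eqref{lcocycle}); your bookkeeping of which $\varphi$ consumes which Sweedler legs is also right. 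Together with the observation, already made in the paper after \eqref{functGamma}, that $\bar\cot$-twisting provides a quasi-inverse of $\Gamma$, this gives the monoidal equivalence.
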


The functor $\Gamma$ induces an equivalence of categories of comodule algebras
\beq\label{functGammaA}
\Gamma : \A^H\to \A^{\hg} \, , \quad {(A,m_A=\cdot\,,\eta_A, \delta^A)
  \mapsto (A_\cot ,m_{A_\cot}\!=\mtco\,,\eta_{A_\cot}, \delta^{A_\cot})}
\eeq
which is not the identity on objects any longer. 
Given an object $A\in \A^H$ with multiplication $m_A$ and unit $\eta_A$, in order for the coaction $\delta^{A_\cot}$ to be an algebra map one has to define a new product on 
$A_\cot=\Gamma(A)$. The new algebra
structure $m_{A_\cot} , \eta_{A_\cot}$ on
$A_\cot\in \A^{\hg}$ is defined by using the components
$\varphi_{\text{--},\text{--}}$ in \eqref{nt} of the natural
isomorphism $\varphi$, 
and by requiring the commutativity of the diagrams
\begin{flalign*}
\xymatrix{
\ar[d]_-{\varphi_{A,A}} A_\cot \otimes^\cot A_\cot \ar[rr]^-{m_{A_\cot}} && A_\cot && \bbK \ar[d]_-{\simeq} \ar[rr]^-{\eta_{A_\cot}} && A_\cot\\
 (A\otimes A)_\cot\ar[rru]_-{\Gamma(m_A)} &&  && \Gamma(\bbK)\ar[rru]_-{\Gamma(\eta_A)} &&
}
\end{flalign*}
in the category $\M^{\hg}$. 
Explicitly we have $\eta_{A_\cot} = \eta_A$ and the deformed product reads as
\begin{align}\label{rmod-twist} 
 m_{A_\cot} : A_\cot \otimes^\cot A_\cot \,\longrightarrow A_\cot \, , \quad
a\otimes^\cot a^\prime \,\longmapsto a \mtco a':=\zero{a} \zero{a'} \,\coin{\one{a}}{\one{a'}} ~.
 \end{align}
Moreover, for any $\A^H$-morphism $\psi : A\to A^\prime$ one checks that 
 $\Gamma(\psi)=\psi : A_\cot \to A^\prime_\cot$ is a morphism in $\A^{\hg}$. 
With similar constructions, for $A,C\in\A^H $, one obtains equivalences 
\beq\label{iso-rel}
\Gamma : {}_A\M^H\to {}_{A_\cot}\M^{\hg}~,~~
\Gamma : {\mathcal M}_{C}{}^H\to {\mathcal  M}{}_{C_\cot}{}^{\hg}~,~~
\Gamma : {}_A{\mathcal M}_{C}{}^H\to {}_{A_\cot}{\mathcal  M}_{C_\cot}{}^{\hg}
\eeq
for the categories of relative Hopf-modules.
 
The functor $\Gamma$ also induces an equivalence of categories of comodule coalgebras
\beq\label{functGammaC}
\Gamma : \C^H\to\C^{\hg} \, , \quad {(C,\Delta_C,\varepsilon_C, \delta^C) \mapsto (C_\cot ,\Delta_{C_\cot},\varepsilon_{C_\cot},\delta^{C_\cot})} \; .
\eeq
Each $H$-comodule coalgebra $C$ with co-structures ($\Delta_C,\varepsilon_C)$
is mapped to the $\hg$-comodule coalgebra $C_\cot=\Gamma(C)$ with co-structures 
$(\Delta_{C_\cot},\varepsilon_{C_\cot})$ 
defined by the commutativity of the diagrams
\begin{flalign*}
\xymatrix{
\ar[drr]_-{\Gamma(\Delta_C)}C_\cot \ar[rr]^-{{\Delta_C}_{\cot}} && C_\cot \otimes^\cot C_\cot\ar[d]^-{\varphi_{C,C}} && \ar[drr]_-{\Gamma(\varepsilon_C)} C_\cot \ar[rr]^-{{\varepsilon_C}_\cot} && \bbK\ar[d]^-{\simeq}\\
&& (C\otimes C)_\cot && && \Gamma(\bbK)
} 
\end{flalign*} 
in the category $\M^{\hg}$.
The
deformed coproduct explicitly reads 
\beq\label{cc-twist} 
 {\Delta_C}_\cot : C_\cot\longrightarrow C_\cot\otimes^\cot C_\cot ~,~~c \longmapsto  \zero{(\one{c})} \ot^\cot \zero{(\two{c})} \, \cot \left(\one{(\one{c})} \ot \one{(\two{c})}\right) ~,
\eeq 
while ${\varepsilon_C}_\cot = \varepsilon_C$. 
As before, $\Gamma$ acts as the identity on morphisms.

\begin{ex}\label{Hcomco}
The right $H$-comodule $\bh=(H,\Ad)$ is a comodule coalgebra with
coproduct and counit those of the Hopf algebra $H$, 
$\Delta_{\bh}=\Delta_H$ and
$\varepsilon_{\bh}=\varepsilon_H$.  Its twist deformation
$\bh_\cot:=(\Gamma(\bh), \Delta_{\bh_\cot}, \varepsilon_{\bh_\cot}, \Ad)$ is an $\hg$-comodule coalgebra.
Explicitly the coproduct \eqref{cc-twist} of an element $h \in \bh_\cot$ reads
$
\Delta_{\bh_\cot}(h)= \two{h} \ot^\cot  \five{h}
\co{S(\one{h})\three{h}}{S(\four{h})\six{h}} $.

On the other hand, given a twist $\cot$ on $H$, 
we have a second 
$\hg$-comodule coalgebra. It is given by the 
right $\hg$-comodule $\underline{\hg}=(\hg,  \Ad_\cot)$ with coaction
$$
\mathrm{Ad}_{\cot}: \underline{\hg} \longrightarrow \underline{\hg}\otimes \hg~,~~
h \longmapsto \two{h} \ot^\cot S_\cot (\one{h}) \mt \three{h} ~ 
$$
and 
coproduct and counit those of the twisted Hopf algebra $\hg$, that is, those of $H$:
$\Delta_{\underline{\hg}}=\Delta_H$ and
$\varepsilon_{\underline{\hg}}=\varepsilon_H$.  
\qed\end{ex}

We
recall from \cite[Thm.3.4]{ppca} that the comodule coalgebras  $\underline{\hg}$ and $\bh_\cot$ 
are isomorphic:
\begin{thm}\label{thm:Qmap}
The $\bbK$-linear map 
\begin{align}\label{mapQ}
\Q : \underline{\hg} \longrightarrow \bh_\cot ~,~~
h \longmapsto \three{h} \, u_\cot(\one{h}) \, \coin{S(\two{h})}{\four{h}}
\end{align}
is an isomorphism of right $\hg$-comodule coalgebras, with inverse
\beq\label{mapQinv}
\Q^{-1} : \bh_\cot \longrightarrow \underline{\hg} ~,~~
h \longmapsto \three{h} \, \bar{u}_\cot (\two{h}) \, \co{S(\one{h})}{\four{h}}~.
\eeq
\end{thm}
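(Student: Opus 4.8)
The plan is to reduce the statement to two independent verifications: (A) that $\Q$ is a morphism of $\hg$-comodule coalgebras, and (B) that the map $\Q^{-1}$ displayed in \eqref{mapQinv} is a two-sided linear inverse of $\Q$. Once both hold, $\Q$ is an isomorphism in $\M^{\hg}$ and $\Q^{-1}$ is automatically a morphism of $\hg$-comodule coalgebras as well (inverses of bijective comodule, resp. coalgebra, maps are again such), so nothing further need be checked for $\Q^{-1}$. The algebraic inputs I expect to use throughout are the explicit form $S_\cot=u_\cot * S * \bar{u}_\cot$ of the twisted antipode, the fact that $u_\cot$ and $\bar u_\cot$ from \eqref{uxS} are mutually convolution inverse, and the cocycle identities \eqref{lcocycle} and \eqref{ii}. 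As a warm-up that fixes the normalisation, the counit property is immediate: expanding $\varepsilon_{\bh_\cot}\circ\Q$, collapsing the surviving leg with $\varepsilon$ and recognising the two remaining cocycle factors as $u_\cot(\one{h})\,\bar u_\cot(\two{h})$ gives $(u_\cot * \bar u_\cot)(h)=\varepsilon(h)$, so $\Q$ intertwines the counits $\varepsilon_{\underline{\hg}}=\varepsilon_{\bh_\cot}=\varepsilon_H$.

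The heart of the proof, and the step I expect to be the main obstacle, is the comodule-map property
$$ \Ad\circ\Q=(\Q\ot\id)\circ\Ad_\cot, $$
where on the left $\bh_\cot$ carries the \emph{untwisted} adjoint coaction $\Ad\colon x\mapsto \two{x}\ot S(\one{x})\three{x}$, while on the right $\Ad_\cot\colon h\mapsto \two{h}\ot^\cot S_\cot(\one{h})\mt\three{h}$ is the genuinely twisted coaction of Example \ref{Hcomco}. I would expand both sides in Sweedler notation (the left-hand side needs the iterated coproduct applied to the surviving leg of $\Q(h)$), then substitute $S_\cot=u_\cot * S * \bar u_\cot$ and the definition $h\mt k=\co{\one{h}}{\one{k}}\two{h}\two{k}\coin{\three{h}}{\three{k}}$ of the twisted product into $\Ad_\cot$. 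The resulting forest of $\cot$, $\bar\cot$ and $u_\cot$ factors must then be brought into coincidence using \eqref{lcocycle} and \eqref{ii}; the difficulty is precisely that this step couples the twisted antipode, the twisted product and the cocycle simultaneously, so the Sweedler bookkeeping is delicate and the cocycle identities have to be inserted in exactly the right places to telescope the spurious $\cot\bar\cot$ pairs.

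For the coalgebra-map property I would compare $\Delta_{\bh_\cot}\circ\Q$ with $(\Q\ot^\cot\Q)\circ\Delta_{\underline{\hg}}$. Here $\Delta_{\underline{\hg}}=\Delta_H$, so the right-hand side is simply $\Q(\one{h})\ot^\cot\Q(\two{h})$, whereas the left-hand side is computed from the explicit twisted coproduct $\Delta_{\bh_\cot}(x)=\two{x}\ot^\cot\five{x}\,\co{S(\one{x})\three{x}}{S(\four{x})\six{x}}$ recorded in Example \ref{Hcomco}; once more the two expressions are matched by repeated use of \eqref{lcocycle} and \eqref{ii}. Conceptually this matching is not an accident: $\Q$ is the coherence isomorphism implementing the equivalence $\Gamma$ on the adjoint object, built from the natural isomorphism $\varphi$ of \eqref{nt} and Proposition \ref{pro:funct}, and one could organise the whole proof around $\varphi$ and $\varphi^{-1}$; I expect, however, that unwinding that identification costs essentially the same computation.

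Finally, for (B) I would verify $\Q^{-1}\circ\Q=\id$ and $\Q\circ\Q^{-1}=\id$ by direct substitution of \eqref{mapQ} and \eqref{mapQinv}, expanding the iterated coproduct of the surviving leg. In each composite the paired factors $u_\cot$ with $\bar u_\cot$ and $\cot$ with $\bar\cot$ telescope: the cocycle identities reorganise the $\cot$/$\bar\cot$ couplings into convolution products that collapse to counits, and $u_\cot * \bar u_\cot=\bar u_\cot * u_\cot=\varepsilon$ disposes of the remaining functionals, leaving the identity. This last part is routine once the index pattern is set up, so the genuine work lies in the comodule-map identity of the second paragraph.
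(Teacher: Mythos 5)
The first thing to note is that this paper does not prove Theorem \ref{thm:Qmap} at all: it recalls it verbatim from \cite[Thm.3.4]{ppca}, so there is no internal proof to compare against and your proposal has to stand on its own. Its architecture is sound: establishing (A) that $\Q$ is a morphism of $\hg$-comodule coalgebras and (B) that the map \eqref{mapQinv} is a two-sided linear inverse does yield the theorem, and your shortcut that the inverse of a bijective comodule (resp. coalgebra) morphism is automatically a morphism of the same kind is correct, so $\Q^{-1}$ indeed needs no separate verification. You also identify all the relevant structures correctly: the untwisted coaction $\Ad$ on $\bh_\cot$ versus $\Ad_\cot$ on $\underline{\hg}$, the coproducts $\Delta_{\underline{\hg}}=\Delta_H$ versus the twisted $\Delta_{\bh_\cot}$ of Example \ref{Hcomco}, and the counit check you actually perform is right.

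The genuine gap is that the three identities which constitute the entire content of the theorem, namely $\Ad\circ\Q=(\Q\ot\id)\circ\Ad_\cot$, $\Delta_{\bh_\cot}\circ\Q=(\Q\ot^\cot\Q)\circ\Delta_H$, and $\Q^{-1}\circ\Q=\id=\Q\circ\Q^{-1}$, are nowhere verified: each is asserted to follow from inserting the cocycle conditions \eqref{lcocycle}, \eqref{ii} ``in exactly the right places'', which is a description of the proof, not a proof. Your claim that the inverse check is routine telescoping is, moreover, optimistic: expanding the composite gives $\Q^{-1}(\Q(h))=u_\cot(\one{h})\,\bar u_\cot(\four{h})\,\co{S(\three{h})}{\six{h}}\,\coin{S(\two{h})}{\seven{h}}\,\five{h}$, in which $u_\cot$ and $\bar u_\cot$ sit on non-adjacent Sweedler legs, so they cannot be cancelled via $u_\cot\ast\bar u_\cot=\varepsilon$ as they stand; one first needs recombination identities of the type $u_\cot(\one{h})\,\coin{S(\two{h})}{k}=\co{\one{h}}{S(\two{h})k}$ --- this is \cite[Lem.3.2]{ppca}, quoted in the present paper in the proof that $\Gamma\circ\underline{\theta}_A=\underline{\theta}_{A_\cot}\circ\Gamma_\Q$ --- before anything collapses, and the same lemma-level input is what makes the comodule-map and coalgebra-map computations close. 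Until those computations are actually carried out (or until $\Q$ is derived structurally from the natural isomorphism $\varphi$ of Proposition \ref{pro:funct}, a route you mention but likewise leave unexecuted), what you have is a correct and well-organised plan, not a proof.
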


\subsection{The coquasitriangular  case}~\\[.5em]
In this section we consider $2$-cocycles on coquasitriangular Hopf algebras and study  twisted associated bialgebras.

Recall that if $H$ is coquasitriangular,  the category $\A^H$ is
monoidal (see Proposition \ref{prop:tpr}). Also, as mentioned in Example \ref{ex:rug}, if $R$ is the
universal $R$-form of $H$, the twisted Hopf algebra $H_\gamma$ is coquasitriangular with universal 
$R$-form $\r_\cot =\cot_{21} *\r* \bar{\cot}$.
 
\begin{prop}\label{prop:funct-alg} 
Let $(H,R)$ be a coquasitriangular Hopf algebra and $\gamma$ a $2$-cocycle on $H$. 
There is an equivalence of monoidal categories between $(\A^H, \bot)$ and $(\A^{\hg}, \bot^\cot)$ 
given by the functor $\Gamma : \A^H \to\A^{\hg}$ {in \eqref{functGammaA}} and the isomorphisms in $\A^{\hg}$
\begin{align} \label{nt-alg}
\varphi_{A,C}: A_\cot \bot^\cot C_\cot \longrightarrow  (A \bot C)_\cot  ~,
\quad
a \bot^\cot c \longmapsto   \zero{a} \bot \zero{c} ~\coin{\one{a}}{\one{c}} ~,\nn
\end{align}
with 
$A_\cot \bot^\cot C_\cot$  the braided tensor product of the algebras $A_\cot$ and $ C_\cot$, and 
$(A \bot C)_\cot$ the image via $\Gamma$ of the braided tensor product of the algebras $A$ and $C$. (Cf. Proposition \ref{pro:funct}.)
\end{prop}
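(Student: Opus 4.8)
The plan is to bootstrap from the comodule-level statement. By Proposition \ref{pro:funct} the pair $(\Gamma,\varphi)$ is already a monoidal equivalence of the comodule categories $(\M^H,\ot)$ and $(\M^{\hg},\ot^\cot)$, and by \eqref{functGammaA} the functor $\Gamma:\A^H\to\A^{\hg}$ is an equivalence of comodule-algebra categories. To promote this to a monoidal equivalence of $(\A^H,\bot)$ and $(\A^{\hg},\bot^\cot)$ I only need three ingredients: that $\Gamma$ is an equivalence (given), that the coherence (associativity and unit) axioms for $(\Gamma,\varphi)$ hold, and that each component $\varphi_{A,C}$ is an isomorphism \emph{in} $\A^{\hg}$, that is an $\hg$-comodule algebra map. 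Now the underlying $H$-comodule of $A\bot C$ is just $A\ot C$ with the coaction \eqref{deltaVW} (Proposition \ref{prop:tpr0}): the braided product changes only the algebra structure. Hence $\varphi_{A,C}$ is, as an $\hg$-comodule map, literally the isomorphism \eqref{nt} of Proposition \ref{pro:funct}; the associators are trivial throughout; and the coherence and naturality squares, being equalities of comodule maps, are inherited verbatim. The one genuinely new point is therefore the multiplicativity of $\varphi_{A,C}$.

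To establish it I would first isolate the compatibility of $\varphi$ with the braidings, namely that $(\Gamma,\varphi)$ is in fact a \emph{braided} monoidal equivalence between $(\M^H,\ot,\Psi^R)$ and $(\M^{\hg},\ot^\cot,\Psi^{R_\cot})$:
\beq
\Gamma(\Psi^R_{V,W})\circ\varphi_{V,W}=\varphi_{W,V}\circ\Psi^{R_\cot}_{V_\cot,W_\cot}~.
\eeq
Evaluating both sides on a generic $v\ot^\cot w$ with \eqref{braiding} and \eqref{nt}, both produce $\zero{w}\ot\zero{v}$ and the equality collapses to the identity of convolution forms on $H\ot H$
\beq
\r*\bar\cot=\bar\cot_{21}*\r_\cot~.
\eeq
This in turn is immediate from the definition $\r_\cot=\cot_{21}*\r*\bar\cot$ of \eqref{rug} together with $\bar\cot_{21}*\cot_{21}=(\bar\cot*\cot)_{21}=\varepsilon\ot\varepsilon$, which holds because $\bar\cot$ is the convolution inverse of $\cot$. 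This is the heart of the argument and essentially the only place where the explicit form of the twisted $R$-form enters.

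With the braided-functor identity available, multiplicativity of $\varphi_{A,C}$ is a formal diagram chase. The braided product of $A\bot C$ is $m_A\ot m_C$ precomposed with $\id\ot\Psi^R_{C,A}\ot\id$ (cf. \eqref{tpr} and the form $m_{L\bot L}$ in Definition \ref{def:bbH}), and likewise for $A_\cot\bot^\cot C_\cot$ with $\Psi^{R_\cot}_{C_\cot,A_\cot}$ and the twisted component products. I would rewrite the middle braiding factor by means of the identity just proved, use that $\varphi$ intertwines the component products $m_A$ with $m_{A_\cot}$ and $m_C$ with $m_{C_\cot}$ (which is exactly the commuting diagram that defines the twisted products, cf. \eqref{rmod-twist}), and close the computation with naturality and monoidal coherence of $\varphi$. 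The net effect is that $\varphi_{A,C}$ carries the product of $A_\cot\bot^\cot C_\cot$ to the twist-deformed product of $(A\bot C)_\cot=\Gamma(A\bot C)$; unitality is automatic since every map involved is unital. Hence $\varphi_{A,C}$ is a morphism in $\A^{\hg}$, and together with the inherited coherence this yields the claimed monoidal equivalence.

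I expect the main difficulty to be organisational rather than conceptual: in checking the scalar identity $\r*\bar\cot=\bar\cot_{21}*\r_\cot$ one must keep careful track of which coaction legs are consumed by $\varphi$ and which by the braiding when the coaction is iterated, and in the diagram chase one must thread the braiding through the correct tensor factors. If one prefers to avoid the categorical route, the alternative is to expand $\varphi_{A,C}$ of a product in $A_\cot\bot^\cot C_\cot$ and the product in $(A\bot C)_\cot$ of the two images directly, using \eqref{tpr}, \eqref{rmod-twist} and \eqref{rug}; there the real work is matching the several factors of $\bar\cot$ and $\r_\cot$ on the two sides by repeated use of the $2$-cocycle conditions \eqref{lcocycle}--\eqref{ii}, which is precisely the content repackaged cleanly by the braided-functor identity above.
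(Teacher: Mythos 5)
Your proof is correct, and it takes a genuinely different route from the paper's. The paper establishes multiplicativity of $\varphi_{A,C}$ by hand: it writes a generic product using $(a \bot^\cot c) = (a \bot^\cot 1_C) \tpr (1_A \bot^\cot c)$, verifies the algebra-map property on three special configurations — $(a\bot^\cot 1_C)\tpr(a'\bot^\cot c')$ and $(a\bot^\cot c)\tpr(1_A\bot^\cot c')$, which follow from the $2$-cocycle identity \eqref{ii} (equivalently, from $\varphi_{A,C}$ being an isomorphism of relative Hopf modules as in \eqref{iso-rel}), and the cross term $(1_A\bot^\cot c)\tpr(a'\bot^\cot 1_C)$, which uses $\r_\cot=\cot_{21}*\r*\bar\cot$ — and then concludes by associativity of $\tpr$ and $\tpr_\cot$. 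You instead promote Proposition \ref{pro:funct} to a \emph{braided} monoidal equivalence via the convolution identity $\r*\bar\cot=\bar\cot_{21}*\r_\cot$, and deduce multiplicativity formally from naturality, coherence, and the defining relation $m_{A_\cot}=\Gamma(m_A)\circ\varphi_{A,A}$. In substance your key identity is the paper's cross-term computation in disguise: the paper's passage from $\coin{\one{a'}}{\one{c}}\,\rug{\two{c}}{\two{a'}}$ to $\ru{\one{c}}{\one{a'}}\,\coin{\two{c}}{\two{a'}}$ is exactly $\bar\cot_{21}*\r_\cot=\r*\bar\cot$ evaluated on coaction legs. What your packaging buys: the braided-functor identity $\Gamma(\Psi^R_{V,W})\circ\varphi_{V,W}=\varphi_{W,V}\circ\Psi^{R_\cot}_{V_\cot,W_\cot}$ is a statement of independent interest that the paper never isolates (though it is implicitly behind Corollary \ref{moncatqc} and Proposition \ref{prop:td-bb}), and it exhibits the proposition as an instance of the general fact that braided monoidal functors carry braided tensor product algebras to braided tensor product algebras. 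What the paper's element-wise route buys: it is self-contained and does not lean on the coherence axioms of $(\Gamma,\varphi)$, which your diagram chase genuinely uses (legitimately, since Proposition \ref{pro:funct} asserts a monoidal equivalence); your write-up should also make explicit that the naturality you invoke is naturality in $\M^H$, which is what is needed because $\Psi^R_{C,A}$, $m_A$ and $m_C$ are comodule maps but in general not algebra maps.
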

\begin{proof}
Due to Proposition \ref{pro:funct} we just need to prove that the
isomorphisms $\varphi_{A,C}$ in ${\M}^{\hg}$  are also algebra maps:
$\varphi_{A,C}\circ m_{A_\cot\bot^\cot C_\cot}\,=\,m_{(A \bot
  C)_\cot}\circ (\varphi_{A,C}\ot \varphi_{A,C})$,
that is,
\beq\label{al-map}
\varphi_{A,C}\big((a \bot^\cot c) \tpr (a' \bot^\cot c')\big) = 
\varphi_{A,C} (a \bot^\cot c) \tpr_\cot \,\varphi_{A,C} (a' \bot^\cot c')~,
\eeq
 for all $a,a',\in A_\cot, c,c'\in C_\cot$.  
 Here the $\tpr$-product on the l.h.s. is the product in the braided
 tensor product algebra $A_\cot \bot^\cot C_\cot$ (defined in \eqref{tpr}),  while
 the $\tpr_\cot$-product on the r.h.s. is the twist deformation  (as in \eqref{rmod-twist}) of the $\tpr$-product in the tensor
product algebra $A \bot C$.

We prove \eqref{al-map} by first evaluating it
on specific products and then using the
associativity of the  multiplications $\tpr$ and $\tpr_\cot$.
Firstly we show that 
\beq\label{leftAlin}
\varphi_{A,C}\big((a \bot^\cot 1_C) \tpr (a' \bot^\cot c')\big) = 
\varphi_{A,C} (a \bot^\cot 1_C) \tpr_\cot \,\varphi_{A,C} (a' \bot^\cot
c')~.
\eeq
Explicitly 
\begin{align*}
\varphi_{A,C}\big((a \bot^\cot 1_C) \tpr (a' \bot^\cot c')\big) 
&=  
\varphi_{A,C} \left( (a \mtco a') \bot^\cot c') \right) 
\\
&= \nn \varphi_{A,C}   \left( (\zero{a}  \zero{a'})   \bot^\cot {c'}\right) ~\coin{\one{a}}{\one{a'}}
\\
&= \nn \zero{a}  \zero{a'} \bot \zero{c'}~ \coin{\one{a}\one{a'}}{\one{c}} \coin{\two{a}}{\two{a'}}
\\ &= \nn
\zero{a} \zero{a'} \bot \zero{c'}~ \coin{\one{a}}{\one{a'}\one{c'}} \coin{\two{a'}}{\two{c'}}
\\ \nn
&=(a\bot 1_C) \tpr_\gamma   (\zero{a'} \bot \zero{c'}) \;\coin{\one{a'}}{\one{c'}}  
\\
&= (a\bot 1_C) \tpr_\gamma  \,\varphi_{A,C} (a' \bot^\cot c') 
\\
&= \varphi_{A,C}(a\bot^\cot 1_C) \tpr_\gamma  \,\varphi_{A,C} (a' \bot^\cot c') 
\end{align*}
having also used the 2-cocycle condition \eqref{ii} for the fourth and fifth equalities.
Since $\varphi_{A,C}(a \bot^\cot 1_C)= a \bot 1_C $, the identity \eqref{leftAlin}  just expresses the fact that 
$\varphi_{A,C}$ are isomorphisms in ${}_{A_\cot}{\mathcal
  M}$
  for the obvious left action of $A$ and $A_\cot$ on
$A\bot C$ and $A_\cot\bot^\cot C_\cot$ respectively, see \eqref{iso-rel}. 
Similarly, since the $\varphi_{A,C}$ are isomorphisms in ${\mathcal
  M}{}_{C_\cot}$, we have 
$$
\varphi_{A,C}\big((a \bot^\cot c) \tpr (1_A \bot^\cot c')\big) = 
\varphi_{A,C} (a \bot^\cot c) \tpr_\cot \,\varphi_{A,C} (1_A \bot^\cot
c')~.
$$
Finally we have
\begin{align*}
\varphi_{A,C}\big((1_A \bot^\cot c) \tpr (a' \bot^\cot 1_C)\big) \nn
&= \varphi_{A,C}(\zero{a'}\bot^\cot \zero{c})\,\rug{\one{c}}{\one{a'}}\\
&=\nn
\zero{a'}\bot \zero{c}\,\bar\cot(\one{a'}\ot\one{c})\rug{\two{c}}{\two{a'}}\\
&=\nn
\zero{a'}\bot \zero{c}\,\ru{\one{c}}{\one{a'}}\bar\cot(\two{c}\ot\two{a})\\
&=\nn
\zero{(1_A\bot c)}\tpr\zero{(a'\bot  1_C)}\,\bar\cot(\one{c}\ot\one{a})\\
&=
\varphi_{A,C} (1_A \bot^\cot c) \tpr_\cot \,\varphi_{A,C} (a' \bot^\cot 1_C) 
\end{align*}
where in the third line we used that $\r_\cot =\cot_{21} *\r*
\bar{\cot}$ (cf. \eqref{rug}).
Thus on the generic product of two elements 
$ (a \bot^\cot c) = (a \bot^\cot 1) \tpr (1 \bot^\cot c) $
and 
$(a' \bot^\cot c') = (a' \bot^\cot 1) \tpr (1 \bot^\cot c')
$
the map $\varphi_{A,C}$  is an  algebra map.
\end{proof}
{This result and Corollary \ref{catqc} lead to}
\begin{cor}\label{moncatqc}
Let $(H,\r)$ be a cotriangular Hopf algebra. 
With the notations of Proposition \ref{prop:funct-alg}, the restriction
of the functor
$(\Gamma, \varphi) :(\A^H,\bot)\rightarrow (\A^{\hg},\bot^\cot)$ to
the subcategory $(\qcr,\bot)$ of quasi-commutative  comodule algebras induces an equivalence of monoidal categories 
$(\qcr,\bot)\simeq(\qcrc,\bot^\cot)$. 
\end{cor}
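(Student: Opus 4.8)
The plan is to show that the monoidal equivalence $(\Gamma,\varphi)$ of Proposition~\ref{prop:funct-alg} simply restricts to the indicated full subcategories. Since $\qcr$ and $\qcrc$ are \emph{full} subcategories (their morphisms are all the comodule algebra maps of the ambient categories), full faithfulness of the restriction is automatic from that of $\Gamma$ on $(\A^H,\bot)$ and $(\A^{\hg},\bot^\cot)$. Thus only three things need to be verified: that $\Gamma$ carries objects of $\qcr$ into $\qcrc$; that the restriction is essentially surjective onto $\qcrc$; and that the natural isomorphisms $\varphi_{A,C}$ restrict, i.e.\ that both subcategories are closed under their respective braided tensor products. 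The first point is already recorded in Example~\ref{ex:qc}: for $A\in\qcr$ and $\cot$ a $2$-cocycle on $H$, the twisted comodule algebra $A_\cot=\Gamma(A)$ is quasi-commutative for the $R$-form $\r_\cot=\cot_{21}*\r*\bar\cot$, that is $A_\cot\in\qcrc$, so $\Gamma$ restricts to a functor $\qcr\to\qcrc$.

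For essential surjectivity I would use the quasi-inverse of $\Gamma$, namely the twist by the convolution inverse $\bar\cot$, which twists $\hg$ back to $H$ and $A_\cot$ back to $A$. By Example~\ref{ex:rug} the pair $(\hg,\r_\cot)$ is again cotriangular, and $\bar\cot$ is a $2$-cocycle on $\hg$ with convolution inverse $\cot$, so Example~\ref{ex:qc} applied to $(\hg,\r_\cot)$ and $\bar\cot$ shows that for any $A'\in\qcrc$ its $\bar\cot$-twist is quasi-commutative for $(\r_\cot)_{\bar\cot}$. A one-line convolution computation, using $\bar\cot_{21}*\cot_{21}=(\bar\cot*\cot)_{21}=\varepsilon\ot\varepsilon$ together with $\bar\cot*\cot=\varepsilon\ot\varepsilon$, gives
\[
(\r_\cot)_{\bar\cot}=\bar\cot_{21}*\r_\cot*\cot
=\bar\cot_{21}*\cot_{21}*\r*\bar\cot*\cot=\r\,,
\]
so the $\bar\cot$-twist of $A'$ lands in $\qcr$ and $\Gamma$ maps it back to $A'$, yielding essential surjectivity.

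Finally, for the monoidal structure I would invoke Corollary~\ref{catqc} twice. Applied to $(H,\r)$ it gives that $(\qcr,\bot)$ is a monoidal subcategory of $(\A^H,\bot)$; applied to the cotriangular Hopf algebra $(\hg,\r_\cot)$ it gives that $(\qcrc,\bot^\cot)$ is a monoidal subcategory of $(\A^{\hg},\bot^\cot)$. Consequently, for $A,C\in\qcr$ both $A_\cot\bot^\cot C_\cot$ and $(A\bot C)_\cot=\Gamma(A\bot C)$ lie in $\qcrc$, and the component $\varphi_{A,C}$ of Proposition~\ref{prop:funct-alg}, being an isomorphism in $\A^{\hg}$ between two objects of $\qcrc$, is automatically an isomorphism in $\qcrc$; naturality and the monoidal coherence are inherited unchanged from Proposition~\ref{prop:funct-alg}. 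Assembling these observations produces the claimed equivalence $(\qcr,\bot)\simeq(\qcrc,\bot^\cot)$. I expect the genuinely structural step, rather than bookkeeping, to be the closure of the subcategories under the braided tensor products: this is precisely where cotriangularity of $H$ (and hence of $\hg$) is indispensable, since the quasi-commutativity of $A\bot C$ rests on Proposition~\ref{prop:tprqc}, which fails for a merely coquasitriangular $\r$. Everything else is a direct transport of Example~\ref{ex:qc} and Proposition~\ref{prop:funct-alg} through the twist.
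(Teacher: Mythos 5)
Your proposal is correct and takes essentially the same route as the paper, which states the corollary as an immediate consequence of Proposition \ref{prop:funct-alg} and Corollary \ref{catqc} (together with Examples \ref{ex:rug} and \ref{ex:qc}, exactly the ingredients you invoke). The extra verifications you supply — full faithfulness from fullness of the subcategories, essential surjectivity via the inverse twist with the computation $(\r_\cot)_{\bar\cot}=\r$, and closure under $\bot$ and $\bot^\cot$ from Corollary \ref{catqc} applied to both $(H,\r)$ and $(\hg,\r_\cot)$ — are precisely the bookkeeping the paper leaves implicit.
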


In the context of coquasitriangular  Hopf algebras in addition to the twist deformation of comodule algebras and comodule coalgebras  (considered in \S \ref{sec:twists-hopf}) one next deforms  braided bialgebras associated with $H$ (see Definition \ref{def:bbH}). 

\begin{prop}\label{prop:td-bb}
Let $(L, m_L, \eta_L, \Delta_L,\varepsilon_L, \delta^L)$ be a braided
bialgebra associated with a coquasitriangular  Hopf algebra $(H,R)$,
and $\cot$ a 2-cocycle on $H$.
The twist deformation of  $(L, m_L, \eta_L, \delta^L)$ 
as an $H$-comodule algebra and of  
$(L, \Delta_L,\varepsilon_L, \delta^L)$  as an $H$-comodule coalgebra
gives the braided bialgebra 
$({L_\cot}, m_{L_\cot}, \eta_{L_\cot}, \Delta_{L_\cot},\varepsilon_{L_\cot}, \delta^{L_\cot})$
associated with the twisted Hopf algebra $\hg$. Thus, ${L_\cot}$ is a bialgebra in the  braided monoidal category $(\M^{H_\cot}\!, \ot^\cot\!,
\Psi^{R_\cot})$ of $H_\cot$-comodules.
 Moreover, if $L$ is a braided Hopf algebra, then $L_\cot$ is a braided Hopf algebra with
antipode $S_{L_\cot}=\Gamma(S_L)$, $\ell\mapsto  S_{L_\cot}(\ell)=S_L(\ell)$.
\end{prop}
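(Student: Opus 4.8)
The plan is to obtain each braided-bialgebra axiom for $L_\cot$ from the corresponding axiom for $L$ by transporting it through the monoidal equivalence $(\Gamma,\varphi)$ of Proposition \ref{prop:funct-alg}, exploiting that $\Gamma$ is a functor on both comodule algebras and comodule coalgebras and that the components $\varphi_{-,-}$ are algebra isomorphisms. First I would record the underlying structures. By the equivalences \eqref{functGammaA} and \eqref{functGammaC}, the quadruple $(L_\cot,m_{L_\cot},\eta_{L_\cot},\delta^{L_\cot})=\Gamma(L,m_L,\eta_L,\delta^L)$ is an $\hg$-comodule algebra and $(L_\cot,\Delta_{L_\cot},\varepsilon_{L_\cot},\delta^{L_\cot})=\Gamma(L,\Delta_L,\varepsilon_L,\delta^L)$ is an $\hg$-comodule coalgebra, with common coaction $\delta^{L_\cot}=\delta^L$ and with $\eta_{L_\cot}=\eta_L$, $\varepsilon_{L_\cot}=\varepsilon_L$ as linear maps. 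The defining diagrams of these deformations supply the two identities $\Gamma(m_L)=m_{L_\cot}\circ\varphi_{L,L}^{-1}$ and $\Gamma(\Delta_L)=\varphi_{L,L}\circ\Delta_{L_\cot}$, which I will use repeatedly.

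The unit and counit axioms are then immediate: since $\varepsilon_L:L\to\bbK$ is an algebra map in $\A^H$ and $\eta_L:\bbK\to L$ a coalgebra map in $\C^H$ (Definition \ref{def:bbH}), applying $\Gamma$ and using $\Gamma(\bbK)\simeq\bbK$ shows that $\varepsilon_{L_\cot}$ is an algebra map and $\eta_{L_\cot}$ a coalgebra map for $\hg$. For the braided-bialgebra compatibility \eqref{cop-braid}, which asserts that $\Delta_{L_\cot}:L_\cot\to L_\cot\bot^\cot L_\cot$ is an algebra map, I would write $\Delta_{L_\cot}=\varphi_{L,L}^{-1}\circ\Gamma(\Delta_L)$ and observe that $\Gamma(\Delta_L):L_\cot\to(L\bot L)_\cot$ is an algebra map (functoriality of $\Gamma:\A^H\to\A^{\hg}$ applied to the algebra map $\Delta_L:L\to L\bot L$), while $\varphi_{L,L}^{-1}:(L\bot L)_\cot\to L_\cot\bot^\cot L_\cot$ is an algebra isomorphism by Proposition \ref{prop:funct-alg}; hence the composition is an algebra map. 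This establishes that $L_\cot$ is a braided bialgebra associated with $\hg$.

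For the antipode, assume $L$ is a braided Hopf algebra with braided antipode $S_L$ and set $S_{L_\cot}:=\Gamma(S_L)=S_L$. As $S_L$ is an $H$-comodule map, $\Gamma(S_L)$ is automatically an $\hg$-comodule map, as required. To verify the antipode property \eqref{bantipode} for $L_\cot$, I would apply $\Gamma$ to the identity $m_L\circ(\id_L\ot S_L)\circ\Delta_L=\eta_L\circ\varepsilon_L$. Using functoriality, the naturality of $\varphi$ (Proposition \ref{pro:funct}) applied to $\id_L\ot S_L$, which gives $\Gamma(\id_L\ot S_L)=\varphi_{L,L}\circ(\id_{L_\cot}\ot^\cot S_{L_\cot})\circ\varphi_{L,L}^{-1}$, together with the identities for $\Gamma(m_L)$ and $\Gamma(\Delta_L)$ above, the three occurrences of $\varphi_{L,L}^{\pm1}$ telescope and yield
\[
m_{L_\cot}\circ(\id_{L_\cot}\ot^\cot S_{L_\cot})\circ\Delta_{L_\cot}=\eta_{L_\cot}\circ\varepsilon_{L_\cot},
\]
while the mirror computation produces the $S_{L_\cot}\ot^\cot\id_{L_\cot}$ identity. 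Thus $S_{L_\cot}=S_L$ is a braided antipode for $L_\cot$.

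All the computations involved are formal once the bookkeeping is in place; the one step I expect to be the main obstacle is precisely this antipode calculation, where one must track the natural isomorphism $\varphi$ across the two distinct monoidal products $\ot$ and $\ot^\cot$ and check that its three occurrences cancel. This cancellation rests squarely on the naturality of $\varphi$ and on the defining diagrams of $m_{L_\cot}$ and $\Delta_{L_\cot}$, so the conceptual content is simply that $(\Gamma,\varphi)$ is a monoidal functor carrying braided bialgebras to braided bialgebras and braided Hopf algebras to braided Hopf algebras.
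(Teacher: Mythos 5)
Your proposal is correct and follows the same overall strategy as the paper: both transport the braided bialgebra structure of $L$ through the monoidal equivalence $(\Gamma,\varphi)$ of Proposition \ref{prop:funct-alg}, using the defining relations $m_{L_\cot}=\Gamma(m_L)\circ\varphi_{L,L}$ and $\Delta_{L_\cot}=\varphi_{L,L}^{-1}\circ\Gamma(\Delta_L)$, and your antipode argument (naturality of $\varphi$ applied to $\id_L\ot S_L$, followed by the telescoping of the three occurrences of $\varphi_{L,L}^{\pm1}$) is exactly the paper's computation.

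Where you genuinely diverge is the compatibility axiom \eqref{cop-braid-twist}. The paper proves it by an explicit chain of equalities: it expands $\Delta_{L_\cot}\circ m_{L_\cot}$, invokes \eqref{cop-braid} for $L$, then needs the auxiliary identity $\Gamma(\Delta_L\ot\Delta_L)\circ\varphi_{L,L}=\varphi_{L\bot L,\,L\bot L}\circ\big(\Gamma(\Delta_L)\ot^\cot\Gamma(\Delta_L)\big)$, which it verifies on elements using the comodule-coalgebra property \eqref{com-co}, before finally using that the $\varphi_{-,-}$ are algebra maps. You bypass all of this by noting that $\Delta_L$ is a morphism in $\A^H$ from $L$ to $L\bot L$ (an algebra map by \eqref{cop-braid}, an $H$-comodule map by \eqref{com-co}), so that $\Gamma(\Delta_L):L_\cot\to(L\bot L)_\cot$ is a morphism in $\A^{\hg}$ by functoriality of \eqref{functGammaA}, and hence $\Delta_{L_\cot}=\varphi_{L,L}^{-1}\circ\Gamma(\Delta_L)$ is a composition of algebra maps. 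This is a cleaner route resting on exactly the same two ingredients (functoriality of $\Gamma$ on comodule algebras and Proposition \ref{prop:funct-alg}); what it buys is that the auxiliary naturality identity for $\Delta_L\ot\Delta_L$, and the element computation behind it, never need to be written down. Your explicit treatment of the unit and counit axioms, which the paper leaves implicit in its appeal to Definition \ref{def:bbH}, is also correct.
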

\begin{proof}
By the general theory, 
the  $\hg$-comodule 
$L_\cot =\Gamma(L)$
  is an $\hg$-comodule algebra with  unit $1_{L_\cot}=1_L$ (or $\eta_{L_\cot}=\Gamma(\eta_L)$) 
 and deformed product 
$m_{L_\cot}= \Gamma(m_L) \circ \varphi_{L,L}$ given by  \eqref{rmod-twist}.
Moreover $L_\cot$ is an $\hg$-comodule coalgebra with
counit $\varepsilon_{L_\cot}=\Gamma(\varepsilon_L)$, $\ell\mapsto  \varepsilon_{L_\cot}(\ell)=\varepsilon_L(\ell)$, 
and deformed coproduct $\Delta_{L_\cot} = \varphi_{L,L}^{-1} \circ \Gamma(\Delta_L)$ given by \eqref{cc-twist}.
In order for 
$({L_\cot}, m_{L_\cot}, \eta_{L_\cot},
\Delta_{L_\cot},\varepsilon_{L_\cot}, \delta^{L_\cot})$ to be a braided
bialgebra associated with the twisted Hopf algebra $\hg$ it suffices 
to show (cf. Definition \ref{def:bbH}) that
$\Delta_{L_\cot}$ is an algebra map for the product
  $m_\cot$ in $L_\cot$ and the product $m_{L_\cot \bot^\cot L_\cot} =
  (m_{L_\cot} \otimes^\cot m_{L_\cot})  \circ (\id_{L_\cot}\otimes^\cot
  \Psi^{R_\cot}_{L_\cot,L_\cot} \otimes^\cot \id_{L_\cot})$ in
  $L_\cot\bot^\cot L_\cot$:  
\beq\label{cop-braid-twist}
\Delta_{L_\cot}\circ m_{L_\cot}  = m_{L_\cot \bot^\cot L_\cot}\circ (\Delta_{L_\cot}\otimes^\cot\Delta_{L_\cot}) \, .
\eeq
On the one hand,
\begin{align*}
\Delta_{L_\cot} \circ m_{L_\cot} &= \varphi_{L,L}^{-1} \circ \Gamma(\Delta_L) \circ  \Gamma(m_L) \circ \varphi_{L,L} =
\varphi_{L,L}^{-1} \circ \Gamma(\Delta_L \circ m_L) \circ \varphi_{L,L}\\
& =  \varphi_{L,L}^{-1} \circ \Gamma \big(m_{L \bot L} \circ (\Delta_L \ot\Delta_L) \big) \circ \varphi_{L,L}
=  \varphi_{L,L}^{-1} \circ \Gamma (m_{L \bot L})\circ \Gamma (\Delta_L \ot\Delta_L) \circ \varphi_{L,L}
\end{align*}
where we have used  that $L$ is a braided bialgebra associated
with $H$ (cf. \eqref{cop-braid}). \\
On the other hand, the maps $\varphi_{-,-}$  satisfy  
$$
 \Gamma (\Delta_L \ot \Delta_L) \circ \varphi_{L,L} =\varphi_{_{L \bot L ,  L \bot L}}\circ \big(\Gamma(\Delta_L) \ot^\cot \Gamma(\Delta_L) \big)
\,  
$$
as it can be verified on generic elements in $L_\cot \bot^\cot L_\cot$
by using that $L$ is an $H$-comodule coalgebra (cf. equation \eqref{com-co}).
Thus,
\begin{align*}
\Delta_{L_\cot} \circ m_{L_\cot} &= \varphi_{L,L}^{-1} \circ \Gamma (m_{L \bot L} )\circ 
\varphi_{_{L \bot L ,  L \bot L}}\circ \big(\Gamma(\Delta_L) \ot^\cot \Gamma(\Delta_L) \big) \\
&=\varphi_{L,L}^{-1}  \circ m_{(L \bot  L)_\cot} \circ \big(\Gamma(\Delta_L) \ot^\cot \Gamma(\Delta_L) \big) \\
& =
m_{L_\cot \bot^\cot L_\cot}\circ (\varphi_{L,L}^{-1} \ot^\cot \varphi_{L,L}^{-1}) \circ \big(\Gamma(\Delta_L) \ot^\cot \Gamma(\Delta_L) \big) \\
& = m_{L_\cot \bot^\cot L_\cot}\circ (\Delta_{L_\cot}\ot^\cot \Delta_{L_\cot}) \, ,
\end{align*}
by using in the third equality that  $\varphi_{-,-}$ are algebra maps
(cf. \eqref{al-map}). 

If in addition $L$ has an antipode $S_L$ (by assumption an $H$-comodule map), 
its image under $\Gamma$, $S_{L_\cot}:=\Gamma(S_L)$, $\ell\mapsto
S_{L_\cot}(\ell)=S_L(\ell)$, is an $H_\cot$-comodule map. We show it is an antipode for 
the twisted bialgebra $L_\cot$.  One easily verifies the equality 
 $$
 \varphi_{L,L} \circ \big(\Gamma(\id_{L}) \ot^\cot \Gamma( S_{L}) \big) \circ \varphi_{L,L}^{-1}= \Gamma(\id_{L} \otimes S_{L})
 $$ of $H_\cot$-equivariant maps. Then
\begin{align*}
m_{L_\cot}  \circ (\id_{L_\cot} \ot S_{L_\cot})\circ \Delta_{L_\cot} 
& =
\Gamma(m_L) \circ \varphi_{L,L} \circ \big(\Gamma(\id_{L}) \ot \Gamma( S_{L}) \big) \circ \varphi_{L,L}^{-1} \circ \Gamma(\Delta_L)
\\
 &=
\Gamma(m_L) \circ \Gamma(\id_{L} \otimes S_{L})  \circ \Gamma(\Delta_L)
\\
&=
\Gamma \big(m_L \circ (\id_{L} \otimes S_{L}) \circ \Delta_L \big)
\\
&=
\Gamma \big( \eta_L\circ \varepsilon_L \big)
=
\Gamma( \eta_L) \circ \Gamma(\varepsilon_L)
\\
&=  \eta_{L_\cot}  \circ \varepsilon_{L_\cot}  \; .
\end{align*}
Analogously one shows that 
$m_{L_\cot}  \circ (S_{L_\cot} \ot^\cot \id_{L_\cot} )\circ \Delta_{L_\cot} =\eta_{L_\cot}  \circ \varepsilon_{L_\cot} .$
\end{proof}

\begin{ex}
Let $\bh$ be the braided Hopf algebra associated with a coquasitriangular
Hopf algebra $(H,R)$ (cf. Example \ref{ex:bh}). Given a $2$-cocycle
$\cot$ on $H$, by Proposition \ref{prop:td-bb} we have the braided Hopf algebra $\bh_\cot=(\Gamma(\bh), \hpr_\cot, \Delta_{\bh_\cot}, \varepsilon_{\bh_\cot},\eta_{\bh_\cot}, S_{\bh_\cot}, \Ad)$ associated  with the twisted Hopf algebra $\hg$. It is given by the $\hg$-comodule coalgebra  $\bh_\cot$ of Example \ref{Hcomco} endowed with the product 
\beq\label{prod-hgt0}
h \hpr_\cot k  = 
\zero{h} \hpr \zero{k} ~ \coin{\one{h}}{\one{k}}=
\two{h} \hpr \two{k} ~ \coin{S(\one{h})\three{h}}{S(\one{k})\three{k}} \, .
\eeq
This is the twist deformation of the product
$h \hpr k := \two{h}\two{k} \ru{S(\one{h})\three{h}}{S(\one{k})}$ in $\underline{H}$ 
defined by \eqref{hpr}. In terms of the product in $H$, the product $h \hpr_\cot k $ 
is written as
\beq\label{prod-hgt}
h \hpr_\cot k  = \three{h}\three{k} \, \ru{S(\two{h})\four{h}}{S(\two{k})} \coin{S(\one{h})\five{h}}{S(\one{k})\four{k}}~.
\eeq
\qed\end{ex}

In addition to the braided Hopf algebra $\bh_\cot$ there is also the
braided Hopf algebra $\underline{\hg}$ of the coquasitriangular Hopf
algebra $(H_\cot, R_\cot)$ (cf. Example \ref{ex:bh}). 
The product in $\underline{\hg}$ is as in $\eqref{hpr}$: 
for all $h,k \in \underline{\hg}$, one has
\beq\label{lhsQ0}
h \,\underline{\cdot_\cot}\, k := \two{h}\mt \two{k} 
\rug{S_\cot(\one{h})\mt \three{h}}{S_\cot(\one{k})}~.
\eeq
Recalling the product and antipode in $H_\cot$:
$h \mt k= \co{\one{h}}{\one{k}} \,\two{h}\two{k}\,
\coin{\three{h}}{\three{k}}$, 
and  $S_\cot= u_\cot *S*\bar{u}_\cot$, 
we can rewrite $h \,\underline{\cdot_\cot}\, k$ as
\begin{align}\label{lhsQ}
h \,\underline{\cdot_\cot}\, k
& =
u_\cot(\one{h}) \, u_\cot(\one{k}) 
\bar{u}_\cot(\seven{h})\bar{u}_\cot(\five{k}) \co{\eight{h}}{\six{k} } \, \nine{h} \seven{k} \nn 
\\ & \qquad
\coin{\ten{h}}{\eight{k} }  \co{S(\six{h})}{  h_{\scriptscriptstyle{(11)}} }   
\co{S(\four{k})}   {S(\five{h})h_{\scriptscriptstyle{(12)}}} \nn
\\ &  \qquad
\ru{S(\four{h})h_{\scriptscriptstyle{(13)}}}{S(\three{k})}  
\coin{S(\three{h})h_{\scriptscriptstyle{(14)}}}{S(\two{k})}  
\coin{S(\two{h})}{h_{\scriptscriptstyle{(15)}}}   \, .
\end{align}
The  braided Hopf algebras $\underline{\hg}$ and $\bh_\cot$ are isomorphic:
\begin{thm}\label{isoQb}
The $\bbK$-linear map 
$\Q : \underline{\hg} \longrightarrow \bh_\cot$ 
in \eqref{mapQ} with inverse in \eqref{mapQinv} 
is an isomorphism of braided Hopf algebras associated with $\hg$.
\end{thm}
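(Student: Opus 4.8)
The plan is to build on Theorem \ref{thm:Qmap}, which already establishes that $\Q$ is an isomorphism of right $\hg$-comodule coalgebras with inverse \eqref{mapQinv}; in particular $\Q$ is bijective, $\hg$-equivariant, counital and comultiplicative. Hence, to promote $\Q$ to an isomorphism of braided Hopf algebras associated with $\hg$ it suffices to prove that $\Q$ is a \emph{unital algebra map} for the product $\,\underline{\cdot_\cot}\,$ on $\underline{\hg}$ of \eqref{lhsQ0} and the product $\hpr_\cot$ on $\bh_\cot$ of \eqref{prod-hgt0}. The compatibility with the braided antipodes, $\Q\circ S_{\underline{\hg}}=S_{\bh_\cot}\circ\Q$, is then automatic: in the convolution algebra $\Hom(\underline{\hg},\bh_\cot)$ the map $S_{\bh_\cot}\circ\Q$ is a left convolution inverse of $\Q$ (using that $\Q$ is a coalgebra map) while $\Q\circ S_{\underline{\hg}}$ is a right convolution inverse (using that $\Q$ is an algebra map), each via the antipode property \eqref{bantipode}; since a left and a right inverse in a monoid coincide, the two expressions agree.

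Unitality, $\Q(1)=1$, is immediate from \eqref{mapQ}: as $\Delta(1)=1\ot 1$, the defining unitality of $\cot$ gives $u_\cot(1)=\co{1}{S(1)}=1$ and $\coin{S(1)}{1}=1$, whence $\Q(1)=1$. The substance of the theorem is therefore the multiplicativity
\[
\Q(h\,\underline{\cdot_\cot}\, k)=\Q(h)\hpr_\cot\Q(k)\qquad\text{for all }h,k\in\underline{\hg}.
\]
I would prove this by a direct Sweedler-calculus computation, writing both sides as elements of the underlying $\bbK$-module $H$. On the left I expand $h\,\underline{\cdot_\cot}\, k$ through its fully unpacked form \eqref{lhsQ}, which already resolves the twisted product $\mt$, the twisted antipode $S_\cot=u_\cot*S*\bar u_\cot$ and the twisted $R$-form $\r_\cot=\cot_{21}*\r*\bar\cot$ of \eqref{rug}, and then apply $\Q$ via \eqref{mapQ}. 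On the right I expand $\hpr_\cot$ through its form \eqref{prod-hgt} in terms of the undeformed product of $H$, inserting $\Q(h)$ and $\Q(k)$ from \eqref{mapQ}.

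The main obstacle is the bookkeeping: matching the two sides requires systematically collapsing the many factors of $\cot,\bar\cot,u_\cot,\bar u_\cot$ and $\r,\bar\r$ that occur. The essential tools are the $2$-cocycle conditions \eqref{lcocycle} and \eqref{ii} (to telescope adjacent $\cot/\bar\cot$ legs), the relations \eqref{eq-iii} for the $R$-form (to split and recombine $\r$ across products), the antipode identities \eqref{prop-R}, and the definitions \eqref{uxS} of $u_\cot,\bar u_\cot$ as the convolution factors relating $S$ to $S_\cot$. It is natural to organise the calculation so that the $u_\cot,\bar u_\cot$ corrections produced by $\Q$ precisely absorb the difference between the twisted form $\r_\cot$ governing $\,\underline{\cdot_\cot}\,$ and the form $\r$ governing $\hpr$ inside $\hpr_\cot$; once these cancellations are performed the identity collapses to the cocycle condition, which completes the proof.
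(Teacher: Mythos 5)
Your reduction is exactly the paper's: by Theorem \ref{thm:Qmap} the map $\Q$ is already an isomorphism of $\hg$-comodule coalgebras, so the theorem reduces to showing that $\Q$ is a unital algebra map $(\underline{\hg},\underline{\cdot_\cot})\to(\bh_\cot,\hpr_\cot)$. Unitality is indeed immediate, and your convolution argument that compatibility with the braided antipodes then comes for free (a left and a right convolution inverse of $\Q$ in $\Hom(\underline{\hg},\bh_\cot)$ must coincide, using \eqref{bantipode} together with the coalgebra-map and algebra-map properties of $\Q$) is correct; the paper leaves this point implicit, so that observation is a legitimate, if standard, addition.

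The genuine gap is that the multiplicativity $\Q(h\,\underline{\cdot_\cot}\,k)=\Q(h)\hpr_\cot\Q(k)$ --- which you yourself call ``the substance of the theorem'' --- is never established: you describe a computation and assert that after suitable cancellations ``the identity collapses to the cocycle condition'', but you do not perform it. This is not deferrable bookkeeping. The paper's verification of precisely this identity is a page-long calculation: one expands $\Q(h)\hpr_\cot\Q(k)$ via \eqref{prod-hgt} and $\Q(h\,\underline{\cdot_\cot}\,k)$ via \eqref{lhsQ}, uses the cocycle conditions \eqref{lcocycle}, \eqref{ii} repeatedly, and needs the identity $u_\cot(hk)=\coin{\one{h}}{\one{k}}\,u_\cot(\two{h})\,u_\cot(\two{k})\,\coin{S(\three{k})}{S(\three{h})}$. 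Moreover, the pivotal step is an application of the coquasitriangularity exchange property \eqref{gira-H} (i.e.\ $m_{op}=\r*m*\bar\r$), used to move elements of $H$ past an $R$-form factor,
\begin{equation*}
\ru{S(\five{h})\seven{h}}{S(\four{k})}\,S(\four{h})\eight{h}S(\three{k})
= S(\four{k})S(\five{h})\seven{h}\,\ru{S(\four{h})\eight{h}}{S(\three{k})}\,,
\end{equation*}
and \eqref{gira-H} is an independent axiom of coquasitriangularity: it cannot be recovered from the properties \eqref{eq-iii} and \eqref{prop-R} that your sketch lists as its toolkit. So the cancellation scheme as you describe it cannot close, and until the computation is actually carried out the proof is missing exactly where all the work lies.
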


\begin{proof}
We know from Theorem \ref{thm:Qmap} that the map $\Q$ is an
isomorphism of $\hg$-comodule coalgebras. We are left to show that $\Q
$ is an algebra morphism. It maps the unit of
$\underline{H_\cot}$ to the unit of $\bh_\cot$.
In $\underline{H}_\cot$ the product is given by formula \eqref{prod-hgt}.
Thus we have 
\begin{align*}
\Q(h) \hpr_\cot \Q(k) 
&=   u_\cot(\one{h}) \, u_\cot(\one{k}) \,\three{h} \hpr_\cot \three{k} \,  \coin{S(\two{h})}{\four{h}}
 \, \coin{S(\two{k})}{\four{k}}
\\
& =  u_\cot(\one{h}) \, u_\cot(\one{k}) \, \five{h} \five{k} \, \ru{S(\four{h})\six{h}}{S(\four{k})} \, \coin{S(\two{h})}{\eight{h}}
\\
& \qquad \qquad \coin{S(\three{h})\seven{h}}{S(\three{k})\six{k}} 
 \, \coin{S(\two{k})}{\seven{k}} \; 
\\
& =  u_\cot(\one{h}) \, u_\cot(\one{k}) \, \six{h} \five{k} \, \ru{S(\five{h})\seven{h}}{S(\four{k})} 
\, \coin{S(\two{h})}{\ten{h}} \\
& \qquad \qquad
\coin{S(\four{h})\eight{h}S(\three{k})}{\six{k}} 
\, \coin{S(\three{h}) \nine{h} }{S(\two{k})} \, ,
\end{align*}
where for the last equality we  used the cocycle property \eqref{ii}.
Next we use the coquasitriangularity of $H$  to rewrite
$$
\ru{S(\five{h})h_{\scriptscriptstyle{(7)}}}{S(k_{\scriptscriptstyle{(4)}})} 
S(\four{h})h_{\scriptscriptstyle{(8)}}S(\three{k})
= S(k_{\scriptscriptstyle{(4)}})S(\five{h})h_{\scriptscriptstyle{(7)}}
\ru{S(\four{h})h_{\scriptscriptstyle{(8)}}}{S(\three{k})} 
$$
and obtain
\begin{align*}
\Q(h) \hpr_\cot \Q(k) 
&=  u_\cot(\one{h}) \, u_\cot(\one{k}) \, \six{h} \five{k} \,  \ru{S(\four{h})h_{\scriptscriptstyle{(8)}}}{S(\three{k})} 
\, \coin{S(\two{h})}{\ten{h}} \\
& \qquad \qquad
\coin{S(k_{\scriptscriptstyle{(4)}})S(\five{h})h_{\scriptscriptstyle{(7)}}}{\six{k}} 
\, \coin{S(\three{h}) \nine{h} }{S(\two{k})} \, .
\end{align*}
On the other hand  
\begin{multline*}
\!\!\!\!\!\Q(hk) = \three{h}\three{k} \, u_\cot(\one{h}\one{k}) \, \coin{S(\two{k}) S(\two{h})}{\four{h}\four{k}}
\\ ~~~~~~~~~~= \coin{\one{h}}{\one{k}} u_\cot (\two{h})u_\cot (\two{k}) \coin{S(\three{k})}{S(\three{h})}  
\five{h}\five{k} \,  \coin{S(\four{k}) S(\four{h})}{\six{h}\six{k}}
\end{multline*}
where we used
$u_\cot (hk)= \coin{\one{h}}{\one{k}} u_\cot (\two{h})u_\cot (\two{k}) \coin{S(\three{k})}{S(\three{h})}$
that follows from the basic properties of a 2-cocycle.
Then, using the explicit formula for the product  
$h \,\underline{\cdot_\cot}\, k$ given in \eqref{lhsQ}, we have
\begin{align*}
\Q(h \,\underline{\cdot_\cot}\, k) 
& =
u_\cot(\one{h}) \, u_\cot(\one{k}) 
\coin{S(\five{k})}{S(\seven{h})}  
\nine{h}\seven{k} \,  \coin{S(\six{k}) S(\eight{h})}{\ten{h}\eight{k}}
\\ & 
\qquad \coin{h_{\scriptscriptstyle{(11)}} }{\nine{k} }   
\co{S(\six{h})}{  h_{\scriptscriptstyle{(12)}} }   
\co{S(\four{k})}   {S(\five{h})h_{\scriptscriptstyle{(13)}}}
\\ & 
\qquad \ru{S(\four{h})h_{\scriptscriptstyle{(14)}}}{S(\three{k})}  \coin{S(\three{h})h_{\scriptscriptstyle{(15)}}}{S(\two{k})}  
\coin{S(\two{h})}{h_{\scriptscriptstyle{(16)}}}   
\\ & =
u_\cot(\one{h}) \, u_\cot(\one{k}) 
\seven{h}\six{k} \,  \coin{S(\five{k}) S(\six{h})}{\eight{h}\seven{k}}
\\ & 
\qquad \coin{h_{\scriptscriptstyle{(9)}} }{\eight{k} }   
\co{S(\four{k})S(\five{h})}   {h_{\scriptscriptstyle{(10)}}}
\ru{S(\four{h})h_{\scriptscriptstyle{(11)}}}{S(\three{k})}  
\\ & 
\qquad \coin{S(\three{h})h_{\scriptscriptstyle{(12)}}}{S(\two{k})}  
\coin{S(\two{h})}{h_{\scriptscriptstyle{(13)}}}  \, ,
\end{align*}
where to obtain  the last equality we  used the cocycle condition \eqref{lcocycle} on the product
$\coin{S(\five{k})}{S(\seven{h})} \co{S(\six{h})}{  h_{\scriptscriptstyle{(12)}} }   
\co{S(\four{k})}   {S(\five{h})h_{\scriptscriptstyle{(13)}}}$.
Using once again this condition on the product
$\coin{S(\five{k}) S(\six{h})}{\eight{h}\seven{k}}\coin{h_{\scriptscriptstyle{(9)}} }{\eight{k} }   
\co{S(\four{k})S(\five{h})}{h_{\scriptscriptstyle{(10)}}}$ we finally obtain
\begin{align*}
\Q(h \,\underline{\cdot_\cot}\, k) & =
u_\cot(\one{h}) \, u_\cot(\one{k}) 
\six{h}\five{k} \,  \coin{S(\four{k})S(\five{h})\seven{h}}{\six{k}}
\ru{S(\four{h})h_{\scriptscriptstyle{(8)}}}{S(\three{k})}  \\
& 
\qquad \coin{S(\three{h})h_{\scriptscriptstyle{(9)}}}{S(\two{k})}  
\coin{S(\two{h})}{h_{\scriptscriptstyle{(10)}}}   \, .
\end{align*}
Thus, $\Q(h) \hpr_\cot \Q(k) = \Q(h \,\underline{\cdot_\cot}\, k)$. 
\end{proof}

\subsection{Twisting Hopf--Galois extensions}\label{sec:HGtwist}~\\[.5em]
The deformation by $2$-cocycles of Hopf--Galois extensions was addressed in \cite{ppca} for a general Hopf algebra $H$.  
When $H$ is coquasitriangular one has an additional algebra structure. 

Let $(H,R)$ be a coquasitriangular  Hopf algebra, and $A\in \qcr$
  a quasi-commutative $H$-comodule algebra. Consider the
algebra extension $B=A^{coH}\subseteq A$. Let $\gamma$ be
a $2$-cocycle on $H$, and consider  $ A_\cot\in  \qcrc$ and the corresponding algebra extension
$B_\cot=A_\cot^{co\hg}\subseteq A_\cot$. Since the coactions $\delta^A: A\to A\ot H$ and
$\delta^{A_\cot}:A_\cot\to A_\cot\ot \hg$ coincide, $B_\cot=B$ as
$\bbK$-modules; they also coincide as algebras since  
$B$ carries a trivial $H$-coaction so that $m_{B_\cot}=m_B$ (see \eqref{rmod-twist}). 
From Theorem \ref{prop:canam} both canonical maps 
\beq\label{aagacan}
\chi: A\bot_B A \to A\bot\underline{H} \qquad \mbox{and} \qquad 
\chi_\cot : A_\cot \bot_B^\cot A_\cot \, \to \,A_\cot \bot^\cot \underline{\hg}
\eeq
are comodule algebra maps.  
In the context of coquasitriangular Hopf algebras and
quasi-commutative comodule algebras, Theorem 3.6 of \cite{ppca} can
be sharpened: \begin{thm}\label{Thm:diagr-can-al}
Let $(H,R)$ be a coquasitriangular Hopf algebra, and let $\gamma$ be
a $2$-cocycle on $H$. Let $A\in \qcr$ with  
$B=A^{coH} \subseteq A$ and twist deformation $B=A_\cot^{co\hg}\subseteq
A_\cot\in \qcrc$.
Then the following diagram of morphisms in $\A^{\hg}$ 
\begin{flalign}\label{diagr-can-al}
\xymatrix{
\ar[dd]_-{\varphi_{A,A}} A_\cot \bot_B^\cot A_\cot \ar[rr]^-{\chi_\cot} && A_\cot \bot^\cot \underline{\hg}\ar[d]^-{\id\otimes^\cot \Q}\\
&& A_\cot \bot^\cot \bh_\cot \ar[d]^-{\varphi_{A,\bh}}\\
(A \bot_B A)_\cot \ar[rr]^-{\Gamma(\chi)} && (A \bot \bh)_\cot
}
\end{flalign}
is commutative. If $H$ is cotriangular  the diagram consists of morphisms in $\qcrc$. 
\end{thm}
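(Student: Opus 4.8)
The plan is to split the statement into two parts: first, that each of the five arrows of \eqref{diagr-can-al} is a morphism in $\A^{\hg}$; and second, that the diagram commutes. For commutativity I would \emph{not} run a Sweedler chase along the two paths. The maps $\chi$, $\chi_\cot$, $\Q$ and the $\varphi_{-,-}$ occurring here are the very same $\bbK$-linear maps as those in \cite[Thm.3.6]{ppca}; the only new feature is that their sources and targets now carry the (braided, twisted) algebra structures coming from the coquasitriangular datum. Read as a diagram of $\bbK$-linear, indeed $\hg$-comodule, maps, \eqref{diagr-can-al} is exactly the one whose commutativity is established in \cite[Thm.3.6]{ppca}; since commutativity only concerns the underlying linear maps, it holds here unchanged. (If one preferred a self-contained argument, all arrows are left $A_\cot$-linear, so commutativity would reduce to elements $1\bot_B^\cot a$ and ultimately to the translation map, using that $\Q$ is a comodule-coalgebra isomorphism, Theorem \ref{thm:Qmap}.)

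It thus remains to verify that each arrow is an $\hg$-comodule algebra map, which I would do arrow by arrow by invoking earlier results. The bottom map is $\Gamma(\chi)$, the image under the monoidal functor $\Gamma$ of \eqref{functGammaA} of the canonical map $\chi\colon A\bot_B A\to A\bot\bh$, which is a morphism in $\A^H$ by Theorem \ref{prop:canam}; hence $\Gamma(\chi)$ is a morphism in $\A^{\hg}$. The top map $\chi_\cot$ is a morphism in $\A^{\hg}$ by Theorem \ref{prop:canam} applied to the coquasitriangular Hopf algebra $(\hg,\r_\cot)$ of Example \ref{ex:rug} and the quasi-commutative comodule algebra $A_\cot\in\qcrc$ of Example \ref{ex:qc}. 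The vertical isomorphisms $\varphi_{A,A}$ and $\varphi_{A,\bh}$ are morphisms in $\A^{\hg}$ by Proposition \ref{prop:funct-alg}. Finally, $\Q\colon\underline{\hg}\to\bh_\cot$ is an isomorphism of braided Hopf algebras associated with $\hg$ by Theorem \ref{isoQb}, so in particular a morphism in $\A^{\hg}$; together with $\id_{A_\cot}$, Proposition \ref{prop:tpr0} yields that $\id_{A_\cot}\bot^\cot\Q$ is a morphism in $\A^{\hg}$. The real work behind this assembly is precisely the fact that $\Q$ is an algebra map, i.e.\ the computation of Theorem \ref{isoQb}; I expect no further obstacle, the remaining care being only to use the correct braided/twisted products on the various tensor factors and to invoke monoidal functoriality.

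For the cotriangular refinement, suppose $H$ is cotriangular, so that $\hg$ is cotriangular too (Example \ref{ex:rug}). I would check that all five objects of the diagram lie in $\qcrc$. Indeed $A_\cot\in\qcrc$ by hypothesis; $\underline{\hg}\in\qcrc$ by Example \ref{mainexHcot} applied to $\hg$; and $\bh_\cot=\Gamma(\bh)\in\qcrc$ since $\bh\in\qcr$ by Example \ref{mainexHcot} and twisting preserves quasi-commutativity by Example \ref{ex:qc}. Then $A_\cot\bot^\cot\underline{\hg}$ and $A_\cot\bot^\cot\bh_\cot$ lie in $\qcrc$ by Proposition \ref{prop:tprqc}, while $A\bot\bh\in\qcr$ gives $(A\bot\bh)_\cot=\Gamma(A\bot\bh)\in\qcrc$ by Corollary \ref{moncatqc}; the two $B$-balanced objects are quotients of $A_\cot\bot^\cot A_\cot$ and of $A\bot A$ respectively, hence inherit quasi-commutativity (the relation \eqref{qcomm2} descends to the quotient), and are moreover identified by the isomorphism $\varphi_{A,A}$ in $\A^{\hg}$. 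Since $\qcrc$ is a full sub-monoidal category of $(\A^{\hg},\bot^\cot)$ by Corollary \ref{catqc} applied to $\hg$, every morphism in $\A^{\hg}$ between objects of $\qcrc$ is a morphism in $\qcrc$. Therefore all arrows of \eqref{diagr-can-al} are morphisms in $\qcrc$, completing the proof.
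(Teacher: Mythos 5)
Your proof is correct and takes essentially the same route as the paper's: commutativity is imported from \cite[Thm.3.6]{ppca} since it only concerns the underlying $\hg$-comodule (linear) maps, the algebra-map property of each arrow is assembled from Theorem \ref{prop:canam}, Proposition \ref{prop:funct-alg} and Theorem \ref{isoQb}, and the cotriangular refinement follows from Corollary \ref{moncatqc}. Your arrow-by-arrow justification and the object-by-object check that everything lands in $\qcrc$ (including the quotient argument for the balanced tensor products and the fullness of the subcategory) merely spell out details the paper's proof leaves implicit.
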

\begin{proof}
In \cite[Thm.3.6]{ppca} the commutativity of the diagram was shown for morphisms in ${}_{A_\cot}\M_{A_\cot}{}^{\hg}$, and hence in  $\M{}^{\hg}$. 
Since all maps in the diagram have been shown to be algebra maps (see
Theorem \ref{prop:canam}, Proposition \ref{prop:funct-alg} and Theorem \ref{isoQb}), the diagram is indeed in $\A^{\hg}$. When $H$ is cotriangular, so is $\hg$; then $\bh\in \A_\bc^H$ 
and also $\bh_\cot \simeq  \underline{\hg} \in \qcrc$ (from Example
\ref{ex:bh}). 
Thus,  due to Corollary \ref{moncatqc}, the diagram is in $\qcrc$.  
\end{proof}
We remark that since all vertical arrows in the diagram \eqref{diagr-can-al} are isomorphisms, the commutativity of \eqref{diagr-can-al}
implies that the extension $B \subseteq A_\cot$ is an 
$\hg$-Hopf--Galois extension if and only if the starting extension 
$B \subseteq A$ is such for the Hopf algebra $H$ (see Corollary 3.7 in \cite{ppca}).

\subsection{Twisting gauge groups}~\\[.5em]
In the hypothesis of Theorem \ref{Thm:diagr-can-al},
let $B =A^{co H}\subseteq A$ be Hopf--Galois.
The gauge group 
\[
\G_{A_\cot}:=\Hom_{\A ^{\hg}}\big(\,\underline{\hg}\,, A_\cot\big) 
\]
of the Hopf--Galois extension 
$B=A_\cot^{co\hg}\subseteq A_\cot\in  \qcrc$ is isomorphic to the gauge group  
$\G_{A}=\Hom_{\A ^{H}}\big(\,\bh\,, A\big)$ of the initial one:

\begin{prop}\label{thm:gamma-ad}
Let $(H,R)$ be a coquasitriangular Hopf algebra, $A\in \qcr$ and let $\gamma$ be
a $2$-cocycle on $H$. The isomorphism 
$\Gamma: \Hom_{\A ^{H}}\big(\,\bh\,, A\big)
\rightarrow \Hom_{\A^{\hg}}\big(\,{\bh_\cot}\,, A_\cot\big)$, induced by the functor $\Gamma :\A^H  \to \A^{\hg}$ in equation \eqref{functGammaA}, 
when
composed with the pull-back 
$
\Q^*:\Hom_{\A^{\hg}}\big(\,\bh_\cot \,, A_\cot\big) \to \Hom_{\A^{\hg}}\big(\,\underline{\hg}\,, A_\cot\big) 
$ 
of the map  $\Q:
\underline{\hg} \longrightarrow \bh_\cot $ in
\eqref{mapQ}, gives the  group isomorphism 
\begin{align*}
\Gamma_\Q:=\Q^*\circ \Gamma:
\Hom_{\A ^{H}}\big(\,\bh\,, A\big) \, 
\stackrel{\simeq\,}{\longrightarrow} \, 
\Hom_{\A ^{\hg}}\big(\,\underline{\hg}\,, A_\cot\big) \, .
\end{align*}
Explicitly, for all $h \in \underline{\hg}$, one has:
\[
\Gamma_\Q(\f) : h \mapsto \f(\Q(h))=\f(\three{h}) \,
u_\cot(\one{h}) \, \coin{S(\two{h})}{\four{h}}\,.
\]
\end{prop}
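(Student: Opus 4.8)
The plan is to exhibit $\Gamma_\Q$ as the composite $\Q^*\circ\Gamma$ of two bijections, each of which intertwines the relevant convolution products, so that the composite is a group isomorphism between $\G_A=\Hom_{\A^H}(\bh,A)$ and $\G_{A_\cot}=\Hom_{\A^{\hg}}(\underline{\hg},A_\cot)$, both of which are groups under convolution by Proposition \ref{prop:GA} (applied to $(H,R)$ and to $(\hg,R_\cot)$, respectively). I would treat the two factors separately and only at the end assemble them and read off the explicit formula.

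First I would observe that, since $\Gamma:\A^H\to\A^{\hg}$ is an equivalence of categories (Propositions \ref{pro:funct} and \ref{prop:funct-alg}) with $\Gamma(\bh)=\bh_\cot$ and $\Gamma(A)=A_\cot$ (Proposition \ref{prop:td-bb}), it restricts to a bijection $\Hom_{\A^H}(\bh,A)\to\Hom_{\A^{\hg}}(\bh_\cot,A_\cot)$ which is the identity on underlying $\bbK$-linear maps. The substantive point is that this bijection intertwines the convolution products. Here I would write $\f*\g=m_A\circ(\f\ot\g)\circ\Delta_\bh$ and, on the twisted side, $\f*_\cot\g=m_{A_\cot}\circ(\f\ot^\cot\g)\circ\Delta_{\bh_\cot}$, and then use the factorizations $m_{A_\cot}=\Gamma(m_A)\circ\varphi_{A,A}$ and $\Delta_{\bh_\cot}=\varphi_{\bh,\bh}^{-1}\circ\Gamma(\Delta_\bh)$ recorded in the proof of Proposition \ref{prop:td-bb}. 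Naturality of $\varphi$ applied to the comodule maps $\f,\g$ gives $\varphi_{A,A}\circ(\f\ot^\cot\g)=\Gamma(\f\ot\g)\circ\varphi_{\bh,\bh}$, so the two occurrences of $\varphi_{\bh,\bh}$ cancel and $\f*_\cot\g=\Gamma(m_A\circ(\f\ot\g)\circ\Delta_\bh)=\f*\g$ as linear maps. Thus $\Gamma$ is a group isomorphism onto $\Hom_{\A^{\hg}}(\bh_\cot,A_\cot)$.

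Next I would treat $\Q^*$. By Theorem \ref{isoQb}, $\Q:\underline{\hg}\to\bh_\cot$ is an isomorphism of braided Hopf algebras associated with $\hg$; in particular it is an isomorphism of $\hg$-comodule algebras and of $\hg$-comodule coalgebras. Consequently $\Q^*:\psi\mapsto\psi\circ\Q$ maps $\Hom_{\A^{\hg}}(\bh_\cot,A_\cot)$ into $\Hom_{\A^{\hg}}(\underline{\hg},A_\cot)$ and is a bijection with inverse $(\Q^{-1})^*$. It respects convolution because $\Q$ intertwines the coproducts, $\Delta_{\bh_\cot}\circ\Q=(\Q\ot^\cot\Q)\circ\Delta_{\underline{\hg}}$: inserting this identity gives $(\psi_1\circ\Q)*_\cot(\psi_2\circ\Q)=m_{A_\cot}\circ(\psi_1\ot^\cot\psi_2)\circ(\Q\ot^\cot\Q)\circ\Delta_{\underline{\hg}}=(\psi_1*_\cot\psi_2)\circ\Q$.

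Finally, $\Gamma_\Q=\Q^*\circ\Gamma$ is a composition of convolution-preserving bijections, hence a group isomorphism $\G_A\to\G_{A_\cot}$; the explicit formula follows by inserting expression \eqref{mapQ} for $\Q$ into $\Gamma_\Q(\f)(h)=\Gamma(\f)(\Q(h))=\f(\Q(h))$ and using linearity of $\f$, which yields $\f(\three{h})\,u_\cot(\one{h})\,\coin{S(\two{h})}{\four{h}}$. I expect the only real obstacle to be the bookkeeping in the second paragraph, namely keeping straight which tensor product ($\ot$ or $\ot^\cot$) and which structure maps are in play; once the factorizations through $\varphi$ are in hand, the naturality cancellation is purely formal rather than computational.
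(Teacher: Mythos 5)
Your proof is correct, and its skeleton --- realizing $\Gamma_\Q$ as the composite of the two bijections $\Gamma$ and $\Q^*$, with invertibility immediate and multiplicativity the only substantive point --- is the same as the paper's. The difference lies in how multiplicativity is checked. The paper does it in one explicit Sweedler computation on the composite: it rewrites the product of $A$ in terms of $\mtco$ via (the inverse of) \eqref{rmod-twist}, uses $H$-equivariance of $\f,\g$, recognizes the twisted coproduct \eqref{cc-twist}, and finally invokes the coalgebra-map property of $\Q$ from Theorem \ref{thm:Qmap}. Your handling of the $\Q^*$ factor is identical in substance to that last step, but your handling of the $\Gamma$ factor is genuinely more structural: the factorizations $m_{A_\cot}=\Gamma(m_A)\circ\varphi_{A,A}$ and $\Delta_{\bh_\cot}=\varphi_{\bh,\bh}^{-1}\circ\Gamma(\Delta_\bh)$ together with naturality of $\varphi$ (which Proposition \ref{pro:funct} does assert, so this is a legitimate input) are precisely the abstract counterparts of the paper's three Sweedler manipulations, and the cancellation of $\varphi_{\bh,\bh}$ replaces them wholesale. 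What your route buys is transparency and generality: it shows $\Gamma$ preserves convolution for any $H$-comodule coalgebra source and $H$-comodule algebra target, with no quasi-commutativity or Hopf--Galois input entering at that stage, and it confines all $2$-cocycle bookkeeping to already-proven statements (Propositions \ref{pro:funct}, \ref{prop:funct-alg}, \ref{prop:td-bb}). What the paper's computation buys is self-containedness at the level of elements. One point worth stating explicitly in your write-up: the intermediate set $\Hom_{\A^{\hg}}(\bh_\cot,A_\cot)$ need not itself be verified to be a group; it suffices that it carries the convolution operation built from $\Delta_{\bh_\cot}$ and $m_{A_\cot}$ and that both bijections preserve that operation, since the two endpoints are groups by Proposition \ref{prop:GA} --- where, for the twisted side, one uses $A_\cot\in\qcrc$ (Example \ref{ex:qc}) and the fact that the twisted extension is again Hopf--Galois (remark after Theorem \ref{Thm:diagr-can-al}).
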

\begin{proof}
The map $\Gamma_\Q$ is invertible since it
is  composition of invertible maps:
$$
\Hom_{\A ^{H}}\big(\,\bh\,, A\big)
\stackrel{\Gamma}{\longrightarrow}\Hom_{\A
  ^{\hg}}\big(\,{\bh_\cot}\,, A_\cot\big) 
\stackrel{\Q^*}{\longrightarrow}\Hom_{\A ^{\hg}}\big(\,\underline{\hg}\,, A_\cot\big) \, .
$$
We are only left to verify that $\Gamma_\Q$ preserves the group product, that is that 
$$
\Gamma_\Q (\f * \g) = \Gamma_\Q (\f) *_\cot \Gamma_\Q ( \g)  \, ,
$$
for each $\f,\g: \bh \ra A $, where $*$ denotes the convolution
product in $\Hom_{\A ^{H}}\big(\,\bh\,, A\big)$ and $*_\cot$ denotes
the convolution product in $\Hom_{\A ^{\hg}}\big(\,\underline{\hg}\,,
A_\cot\big)$.

On the one hand, for each $h \in \underline{\hg}$,
\begin{align*}
\Gamma_\Q (\f * \g) (h) &=\Gamma (\f * \g) (\Q(h)) =  
(\f * \g) (\Q(h)) = 
 \f (\one{\Q(h)})_{\:\!} \g (\two{\Q(h)})~,
\end{align*}
with the Sweedler notation $\Delta_\bh(h)=\Delta_H(h)=\one{h}\ot \two{h}$.
By considering definition \eqref{rmod-twist}, in order to
express the product in $A$ in terms of that in $A_\cot$ we further have
\begin{align*}
\Gamma_\Q (\f * \g) (h) &
=\zero{(\f  (\Q(\one{h})))} \mtco \zero{(\g (\Q(\two{h})))}\,
\co{\one{(\f  (\Q(\one{h})))} }{\one{(\g (\Q(\two{h})))}}
\\
&
=  \f  (\zero{\Q(\one{h})}) \mtco  \g (\zero{\Q(\two{h})})\,\,
\co{\one{\Q(\one{h})} }{\one{\Q(\two{h})}}\, ,
\end{align*}
where we used that $\f,\g$ are $H$-comodule maps.
Recalling the definition \eqref{cc-twist} of the twisted coproduct 
$\Delta_{\bh_\cot}$ in ${\bh_\cot}$,  the above expression simplifies to
$$
\Gamma_\Q (\f * \g) (h) 
=  \f  (\Q(h)_{\scriptscriptstyle{[1]}}) \mtco  \g (\Q(h)_{\scriptscriptstyle{[2]}} )
$$
with the Sweedler notation $\Delta_{\bh_\cot} (h)=
h_{\scriptscriptstyle{[1]}} \ot h_{\scriptscriptstyle{[2]}}$ for the
components of the  coproduct $\Delta_{\bh_\cot}$.

On the other hand
\begin{align*}
\left(\Gamma_\Q (\f) *_\cot \Gamma_\Q ( \g)\right)  (h)
&= \f  (\Q(\one{h})) \mtco \g (\Q(\two{h}))
\end{align*}
with the Sweedler notation for  the coproduct in
$\underline{\hg}$ which equals that in $H$.
The identity 
$\Gamma_\Q (\f * \g) = \Gamma_\Q (\f) *_\cot \Gamma_\Q ( \g)  $
then follows recalling that $\Q$ is a coalgebra map.
\end{proof}
It is instructive to recover this isomorphism  considering the
  gauge groups of vertical automorphisms (cf. Proposition 
 \ref{prop:Finv}). The gauge groups $\Aut{A}=\Hom_{_B\A ^H}(A, A)$ and 
 $\Aut{A_\cot}=\Hom_{_B \A^{\hg}}(A_\cot, A_\cot)$ are isomorphic via the functor $\Gamma$:
 
\begin{prop}\label{prop:gamma-aut}
The restriction of the functor $\Gamma :\A^H  \to \A^{\hg} $
in \eqref{functGammaA} to ${_B\A ^H}$  gives a group isomorphism 
\begin{eqnarray*}
\Gamma:\Aut{A}  \stackrel{\simeq}{\longrightarrow}& \Aut{A_\cot} \, , 
\quad
\F:A \ra A  \longmapsto& \F_\cot:=\Gamma(\F):  A_\cot \ra A_\cot \; .
\end{eqnarray*}
\end{prop}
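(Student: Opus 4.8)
The plan is to deduce the statement from the fact, established in \S\ref{sec:twists-hopf}, that $\Gamma:\A^H\to\A^{\hg}$ of \eqref{functGammaA} is an equivalence of categories which acts as the identity on the underlying $\bbK$-linear maps, its quasi-inverse being the twist by the convolution inverse $\bar\cot$ (so that $(\hg)_{\bar\cot}=H$ and $(A_\cot)_{\bar\cot}=A$). Thus essentially nothing has to be computed: it remains only to check that $\Gamma$ sends $\Aut{A}$ into $\Aut{A_\cot}$, that it does so bijectively, and that it intertwines the two group laws.

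First I would check well-definedness. Let $\F\in\Aut{A}$. By functoriality of $\Gamma$ on morphisms (recalled after \eqref{rmod-twist}), $\F_\cot:=\Gamma(\F)$, which equals $\F$ as a $\bbK$-linear map, is an $\hg$-comodule algebra map, i.e.\ it is multiplicative for the deformed product $\mtco$ of \eqref{rmod-twist}. It remains to see that $\F_\cot$ is vertical. Because $B$ carries the trivial $H$-coaction one has $m_{B_\cot}=m_B$, so that $B_\cot=B$ as algebras (as noted at the beginning of \S\ref{sec:HGtwist}); since $\Gamma$ leaves the underlying map unchanged, $\F_\cot|_B=\F|_B=\id$. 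Hence $\F_\cot\in\Aut{A_\cot}$. Bijectivity then follows by applying the same argument to the quasi-inverse functor: twisting by $\bar\cot$ sends $\mathsf{G}\in\Aut{A_\cot}$ to the same underlying linear map regarded on $A=(A_\cot)_{\bar\cot}$, which is an $H$-comodule algebra map by functoriality and is vertical by the identical $B_\cot=B$ observation, hence lies in $\Aut{A}$. As the two functors are mutually inverse on underlying linear maps, the induced maps between the sets $\Aut{A}$ and $\Aut{A_\cot}$ are mutually inverse bijections.

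Finally I would verify that $\Gamma$ is a group homomorphism. In both groups the law is the reversed composition $\F\cdot\mathsf{G}=\mathsf{G}\circ\F$ of Proposition \ref{prop:Finv}. As $\Gamma$ is a functor it preserves composition, and as it is the identity on underlying linear maps the composite $\mathsf{G}\circ\F$ of self-maps of $A$ coincides with the composite $\Gamma(\mathsf{G})\circ\Gamma(\F)$ of self-maps of $A_\cot$; therefore
\[
\Gamma(\F\cdot\mathsf{G})=\Gamma(\mathsf{G}\circ\F)=\Gamma(\mathsf{G})\circ\Gamma(\F)=\Gamma(\F)\cdot\Gamma(\mathsf{G}).
\]
The only substantive point in the whole argument is that $\Gamma(\F)$ remains an algebra map once the product of $A$ is deformed to $\mtco$: this is not automatic from $\F$ being multiplicative for $m_A$, but it is precisely the already-established functoriality of $\Gamma$ on $\A^H$-morphisms, so there is no real obstacle beyond invoking that result.
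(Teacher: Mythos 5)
Your proposal is correct and follows essentially the same route as the paper's own proof: the paper likewise observes that $\Gamma$ is the identity on underlying linear maps (so $\F$ and $\F_\cot$ coincide, and verticality is preserved since $B_\cot=B$) and that it preserves the group law given by composition. Your write-up merely makes explicit the well-definedness and the bijectivity via the quasi-inverse twist $\bar\cot$, which the paper treats as immediate.
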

\begin{proof}
Since the functor $\Gamma$ is the identity on morphisms, the linear maps $\F$ and $\F_\cot$ coincide. Clearly, $\Gamma$ preserves the group law which is given by the composition of maps.
\end{proof}
The group isomorphisms in Propositions \ref{thm:gamma-ad} and \ref{prop:gamma-aut} above are related via the isomorphism of Proposition \ref{prop:theta}:
\begin{prop} 
The group isomorphisms ${\underline{\theta}_A}$ and
  ${\underline{\theta}_{A_\cot}}$  given as in Proposition
  \ref{prop:theta},  and the isomorphisms $\Gamma_\Q$ of Proposition 
  \ref{thm:gamma-ad} and  $\Gamma$ of Proposition \ref{prop:gamma-aut}
give the following commutative diagram
\begin{flalign*}
\xymatrix{
\ar[d]_-{\Gamma_\Q} \G_{A}  \ar[rr]^-{\underline{\theta}_A} && \Aut{A} \ar[d]^-{\Gamma}~
\\
\G_{A_\cot} \ar[rr]^-{\underline{\theta}_{A_\cot}} && \Aut{A_\cot}
~.}
\end{flalign*}
\end{prop}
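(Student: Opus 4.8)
The plan is to prove the asserted commutativity $\Gamma\circ\underline{\theta}_A=\underline{\theta}_{A_\cot}\circ\Gamma_\Q$ not by a Sweedler computation on elements, but by rewriting both composites in terms of the structure maps $\bdelta^A$, $m_A$, the natural isomorphism $\varphi$ and the map $\Q$, and then reducing everything to a single identity that follows from Theorem \ref{Thm:diagr-can-al}. Fix $\f\in\G_A=\Hom_{\A^H}(\bh,A)$. Since $\Gamma_\Q(\f)=\Gamma(\f)\circ\Q$ and $\bot^\cot$ is a bifunctor,
\[
\underline{\theta}_{A_\cot}(\Gamma_\Q(\f))=m_{A_\cot}\circ(\id\,\bot^\cot\,\Gamma(\f))\circ(\id\,\bot^\cot\,\Q)\circ\bdelta^{A_\cot}.
\]
First I would use the defining relation of the deformed product, $m_{A_\cot}=\Gamma(m_A)\circ\varphi_{A,A}$ (cf. \eqref{rmod-twist} and Proposition \ref{prop:funct-alg}), together with the naturality of $\varphi$ applied to the $\A^H$-morphisms $\id_A$ and $\f$, namely $\varphi_{A,A}\circ(\id\,\bot^\cot\,\Gamma(\f))=\Gamma(\id_A\bot\f)\circ\varphi_{A,\bh}$. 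Substituting these, and recalling $\Gamma(\f)=\f$, the composite becomes
\[
\Gamma(m_A)\circ\Gamma(\id_A\bot\f)\circ\Big(\varphi_{A,\bh}\circ(\id\,\bot^\cot\,\Q)\circ\bdelta^{A_\cot}\Big),
\]
whereas $\Gamma\circ\underline{\theta}_A(\f)=\Gamma\big(m_A\circ(\id_A\bot\f)\circ\bdelta^A\big)=\Gamma(m_A)\circ\Gamma(\id_A\bot\f)\circ\Gamma(\bdelta^A)$ by functoriality of $\Gamma$. Hence the whole statement reduces to the single identity
\[
\varphi_{A,\bh}\circ(\id\,\bot^\cot\,\Q)\circ\bdelta^{A_\cot}=\Gamma(\bdelta^A),
\]
which I denote $(\star)$.

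To establish $(\star)$ I would \emph{not} compute directly but derive it from the commutative diagram of Theorem \ref{Thm:diagr-can-al}, that is from $\varphi_{A,\bh}\circ(\id\,\bot^\cot\,\Q)\circ\chi_\cot=\Gamma(\chi)\circ\varphi_{A,A}$, viewed as an equality of linear maps. The key observation is that $\bdelta^A$ is the restriction of the canonical map \eqref{can} to $1\bot_B A$: writing $\iota\colon a\mapsto 1\bot_B a$ and $\iota_\cot\colon a\mapsto 1\bot^\cot_B a$, one has $\chi\circ\iota=\bdelta^A$ and $\chi_\cot\circ\iota_\cot=\bdelta^{A_\cot}$. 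Precomposing the Theorem \ref{Thm:diagr-can-al} identity with $\iota_\cot$ turns its left-hand side into $\varphi_{A,\bh}\circ(\id\,\bot^\cot\,\Q)\circ\bdelta^{A_\cot}$, while its right-hand side becomes $\chi\circ(\varphi_{A,A}\circ\iota_\cot)$. Now $\varphi_{A,A}\circ\iota_\cot=\iota$ as a linear map, because $\varphi_{A,A}(1\bot^\cot_B a)=1\bot_B\zero{a}\,\coin{1_H}{\one{a}}=1\bot_B a$ by the normalization of $\bar\cot$; thus the right-hand side equals $\chi\circ\iota=\bdelta^A=\Gamma(\bdelta^A)$, giving $(\star)$ at once and hence the desired commutativity.

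The main obstacle is precisely identity $(\star)$: it is the only place where the nontrivial interplay of $\Q$, $u_\cot$ and $\bar\cot$ with the coaction enters, and I expect the reduction to Theorem \ref{Thm:diagr-can-al} via the section $\iota$ of the multiplication to be the cleanest route, avoiding a long direct verification (which is nonetheless possible using that $\Q$ is a morphism of $\hg$-comodule coalgebras, Theorem \ref{thm:Qmap}). Everything else is bookkeeping with the functor $\Gamma$, the bifunctoriality of $\bot^\cot$, naturality of $\varphi$ and the definition of the deformed product. As a consistency check one can read off both composites on a generic $a$: the top–right path $\Gamma\circ\underline{\theta}_A$ gives $\zero{a}\,\f(\one{a})$ with the product of $A$, the left–bottom path $\underline{\theta}_{A_\cot}\circ\Gamma_\Q$ gives $\zero{a}\mtco\f(\Q(\one{a}))$, and their equality is exactly the content of $(\star)$.
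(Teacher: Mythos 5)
Your proof is correct, but it follows a genuinely different route from the paper's. The paper proves the commutativity by a direct element computation: it rewrites $\zero{a}\f(\one{a})= \zero{a} \mtco \f(\three{a}) \co{\one{a}}{S(\two{a}) \four{a}}$ using the inverse of the twisted-product formula \eqref{rmod-twist} and the $H$-equivariance of $\f$, and then matches this against $\zero{a}\mtco\f(\Q(\one{a}))$ via the explicit cocycle identity $u_\cot(\one{h})\,\coin{S(\two{h})}{k}=\co{\one{h}}{S(\two{h})k}$, quoted from \cite[Lem.3.2]{ppca}. You instead perform a purely structural reduction: functoriality of $\Gamma$, the definition $m_{A_\cot}=\Gamma(m_A)\circ\varphi_{A,A}$, and naturality of $\varphi$ collapse the claim to the single identity $(\star)$, $\varphi_{A,\bh}\circ(\id\,\bot^\cot\,\Q)\circ\bdelta^{A_\cot}=\Gamma(\bdelta^A)$, which you then extract from Theorem \ref{Thm:diagr-can-al} by precomposing with the section $\iota_\cot: a\mapsto 1\bot^\cot_B a$ and using the three easy facts $\chi\circ\iota=\bdelta^A$, $\chi_\cot\circ\iota_\cot=\bdelta^{A_\cot}$, and $\varphi_{A,A}\circ\iota_\cot=\iota$ (unitality of $\bar\cot$). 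All these steps check out, and there is no circularity since Theorem \ref{Thm:diagr-can-al} is established independently; note also that your argument only uses the canonical maps as morphisms, not their bijectivity. What each approach buys: yours makes transparent that the proposition is nothing but the restriction of the diagram \eqref{diagr-can-al} along the unit section, delegating all cocycle combinatorics to Theorems \ref{thm:Qmap} and \ref{Thm:diagr-can-al}, whereas the paper's computation is self-contained at the level of elements at the cost of invoking the external cocycle lemma. One small imprecision in your closing remark: the element-wise equality $\zero{a}\f(\one{a})=\zero{a}\mtco\f(\Q(\one{a}))$ is implied by $(\star)$ (and holds for every $\f$), but it is not literally "exactly" $(\star)$, which is the $\f$-independent identity with values in $(A\bot\bh)_\cot$; this does not affect the validity of the proof, whose logical direction is $(\star)\Rightarrow$ commutativity.
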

\begin{proof}
Let $\f \in \G_{A}$, we have to show that 
$$(\Gamma \circ \underline{\theta}_{A}) \f :A_\cot \ra A_\cot \, , \quad a \mapsto \zero{a}\f(\one{a})
$$
coincides with
$$
({\underline{\theta}_{A_\cot}} \circ {\Gamma_\Q} )\f :A_\cot \ra A_\cot \, , \quad a \mapsto
\zero{a}\mtco \Gamma_\Q(\f)(\one{a}) =
\zero{a}\mtco
\f(\three{a}) \, u_\cot(\one{a}) \, \coin{S(\two{a})}{\four{a}}~.
$$
Observe that 
$$\zero{a}\f(\one{a})= \zero{a} \mtco \zero{(\f(\two{a}))} \co{\one{a}}{\one{(\f(\two{a}))}}
=
\zero{a} \mtco \f(\three{a}) \co{\one{a}}{S(\two{a}) \four{a}}
$$
where for the first equality we  used (the inverse of) formula
\eqref{rmod-twist} to express the product in $A$ in terms of  the
product $\mtco$ in $A_\cot$ and for the second one $H$-equivariance of
$\f: \bh\to A$.
The equality 
$(\Gamma \circ \underline{\theta}_{A}) \f=  ({\underline{\theta}_{A_\cot}} \circ {\Gamma_\Q} )\f
$
 then follows from the identity
$$
u_\cot(\one{h})\bar\cot(S(\two{h})\otimes k)=\cot(\one{h}\otimes
S(\two{h})k)~, 
$$
for all $h,k\in H$. This is shown by using the cocycle condition and the definition of $u_\cot$ in \eqref{uxS},  (see \cite[Lem.3.2]{ppca}).
\end{proof}

\begin{ex}\label{ex:Hcleft} {\it Noncommutative bundle over a point.} 
Let us consider a commutative Hopf algebra $H$ with trivial $R$-form, 
and the trivial Hopf--Galois extension $\bbK\subseteq H$ with $H$-coaction 
given by the coproduct $\Delta$
(and cleaving map $j=\id_H: H\to H$). 

Then, let $\cot$ be a
2-cocycle on $H$. 
The commutative Hopf algebra $(H, R=\varepsilon\ot \varepsilon)$ is twist deformed to the cotriangular
Hopf algebra $(H_\cot , R_\cot=\cot_{21} *\bar\cot)$.  The total space algebra
$(A=H,\cdot,\Delta)\in \qcr$ is deformed as an $H$-comodule algebra to
$(A_\cot=H_{\bullet_\cot}, \bullet_\cot, \Delta)\in \qcrc$, and we obtain the Hopf--Galois extension
$\bbK\subseteq H_{\bullet_\cot}\in \qcrc$. Notice that this is a cleft
extension with cleaving map $j=\id_H: H_\cot\to H_{\bullet_\cot}$, but
in general needs not be a trivial extension because $ H_\cot$ and
$H_{\bullet_\cot}$ are in general not isomorphic as $H_\cot$-comodule
algebras (see next examples). The gauge group of this
noncommutative cleft extension $\bbK\subseteq H_{\bullet_\cot} $ is isomorphic to the gauge group of the trivial extension $\bbK\subseteq H$.
\qed\end{ex}
\begin{ex}\label{ex:torotwisted} {\it Noncommutative torus bundle over a point.} 
Let us consider the Hopf--Galois extension
$ \IC = \O (\mathbb{T}^n_\theta)^{co\O (\mathbb{T}^n)} \subseteq  \O (\mathbb{T}^n_\theta)$ 
with total space the noncommutative torus $\mathbb{T}^n_\theta$ with generators 
$t_j, \, t^*_j$ satisfying $t_it^*_i=t_i^*t_i=1$, $t_j t_k = e^{i \pi
  \theta_{jk}}t_k t_j$ and $t_j t_k^* = e^{i \pi \theta_{kj}}t_k^*
t_j$, for $\theta_{jk}=-\theta_{kj} \in \IR$, and with structure
group the Hopf algebra $\O (\mathbb{T}^n)$. 

As in Example \ref{ex:toroc}, the $\O (\mathbb{T}^n)$-comodule
map and the algebra map properties of a gauge
transformation $\F: \O (\mathbb{T}^n_\theta)\to \O
(\mathbb{T}^n_\theta)$ imply that the latter is determined by 
$t_j \mapsto F(t_j)=\lambda_j t_j$ and $t^*_j \mapsto
F(t^*_j)=\lambda^*_j t^*_j$, with complex numbers of modulus one, $| \lambda_j |^2 =1$. This shows
that, independently from the noncommutativity of the generators, the gauge 
transformations are parametrized by $\lambda_j \in S^1$.  
Hence the gauge group is isomorphic to the $n$-dimensional torus $\mathbb{T}^n$,
the same of the commutative Hopf--Galois extension
$ \IC = \O (\mathbb{T}^n)^{co\O (\mathbb{T}^n)} \subseteq  \O
(\mathbb{T}^n)$. 

This result is consistent with the use of Proposition \ref{prop:gamma-aut} for the Hopf--Galois extension $ \IC  \subseteq  \O (\mathbb{T}^n_\theta)$ seen
as a  twist deformation of  $\IC \subseteq \O (\mathbb{T}^n)$.
The 2-cocycle  $\gamma$ on $\O (\mathbb{T}^n)$ is determined by its value on the generators,
\beq
\label{427cocycleTn}
\co{t_j}{t_k}= \exp(i \pi \,\theta_{jk}) ~~,~~\theta_{jk}=-\theta_{kj} \in \IR
\eeq
and  defined on the whole algebra by requiring 
$\co{xy}{z}=\co{x}{\one{z}}\co{y}{\two{z}}$ and 
 $\co{x}{yz}=\co{\one{x}}{z}\co{\two{x}}{y}$, for all $x,y,z, \in
 \mathcal{O} (\mathbb{T}^n)$. Being the Hopf algebra $H=\O (\mathbb{T}^n)$ cocommutative, one now obtains $H_\cot=H$.
 \qed\end{ex}
\begin{rem}
When comparing the result of the previous example with Remark \ref{homotopy}, we see that non isomorphic Hopf--Galois extensions 
$\IC  \subseteq  \O (\mathbb{T}^n_\theta)$ (obtained from non co-homologous twists) have isomorphic gauge groups. 
 This is a general feature occuring when starting with a cocommutative Hopf algebra $H$, which is hence transparent to the twist 
so that $H_\cot=H$ (cf. equation \eqref{hopf-twist}).
 \end{rem}  

\begin{ex} \label{ex:sotwisted} {\it Noncommutative $SO(2n)$-bundle over a point.} We specialise Example \ref{ex:Hcleft} to 
$H=\mathcal{O}(SO(2n, \IR))$, 
the algebra of coordinate functions on $SO(2n,
\IR)$.
Let $\mathcal{O}(M(2n, \IR))$ be the  commutative
$*$-algebra over $\mathbb{C}$ with generators 
$a_{ij}, b_{ij}, a_{ij}^*, b_{ij}^*$, $i,j=1, \dots n$.     It is a bialgebra with coproduct and counit  given in matrix notation as
$$
\Delta(M)=M \overset{.}{\otimes} M \quad , \quad \varepsilon(M)=\mathbb{I}_{2n}, \quad \mbox{ for  }\quad M=(M_{IJ}):=\begin{pmatrix}
(a_{ij}) & (  b_{ij})
\\
(b_{ij}^*) & (a_{ij}^*)
\end{pmatrix} \, . 
$$
Here $\overset{.}{\otimes}$ is the combination of tensor product and matrix multiplication, $\mathbb{I}_{2n}$ is the identity matrix and capital indices $I,J$ run from $1$ to $2n$.
The Hopf coordinate algebra $SO(2n, \IR)$ is the quotient 
$\mathcal{O}(SO(2n, \IR))= \mathcal{O}(M(2n, \IR)){/{I_Q}} $
with $I_Q$ the bialgebra ideal defined by   
$$
I_Q= \langle\, M^t Q M -Q \; ; \; M Q M^t-Q\; ; \;
\det(M)-1
\,\rangle \; 
, \quad Q:=\begin{pmatrix}
0 &\mathbb{I}_n 
\\
 \mathbb{I}_n & 0
\end{pmatrix} =Q^t=Q^{-1}~.
$$
The $*$-structure in $\mathcal{O}(M(2n, \IR))$ is 
$M^*=QMQ^{-1}$ so that  $I_Q$ is a $*$-ideal.  The  $*$-bialgebra $\mathcal{O}(SO(2n, \IR))$ is a $*$-Hopf algebra with antipode 
$S(M):= Q M^tQ^{-1}$.  

The algebra $A:=\mathcal{O}(SO(2n, \IR))$ is an $\mathcal{O}(SO(2n, \IR))$-comodule algebra with coaction the coproduct $\Delta$. The corresponding Hopf--Galois extension $\IC \subseteq \mathcal{O}(SO(2n, \IR))$ is trivial and has gauge group  
\beq\label{gaso}
\G_A\simeq  \left( \{ \alpha: \mathcal{O}(SO(2n, \IR)) \ra \IC \mbox{ algebra maps} \} , *\right) ,
\eeq
the set of characters of $\mathcal{O}(SO(2n, \IR))$ with  group multiplication the convolution product.

Next we consider a 2-cocycle $\gamma$ on  a maximal torus  in $\mathcal{O}(SO(2n, \IR))$.
Let $\mathcal{O} (\mathbb{T}^n)$ be  the  commutative $*$-Hopf algebra of functions on the $n$-torus as considered in Example~\ref{ex:toroc}.
It is a quotient Hopf algebra, a ``subgroup'' of $\mathcal{O}(SO(2n, \IR))$,
with projection
$$\pi:
  M \mapsto \mathrm{diag}(T_I) := \mathrm{diag}(t_1, \dots t_n , t_1^* , \dots t_n^*) .
$$ 
The 2-cocycle $\gamma$ on $\mathcal{O} (\mathbb{T}^n)$ given in \eqref{427cocycleTn} lifts
by pullback to a 2-cocycle on
$\mathcal{O}(SO(2n, \IR)) $, that we
still denote by $\gamma$,
$$\co{M_{IJ}}{M_{KL}}:=\co{\pi(M_{IJ})}{\pi(M_{KL})}=\co{T_{I}}{T_{K}} \delta_{IJ} \delta_{KL} .$$ 
The twisted  Hopf algebra $H_\cot=\mathcal{O}(SO(2n, \IR))_\cot$ has product \eqref{hopf-twist}:   
$$
M_{IJ} \mt M_{KL}= \co{T_I}{T_K} M_{IJ} M_{KL}\coin{T_J}{T_L} .
$$
Since $\co{T_I}{T_K} 
= \coin{T_K}{T_I}$,  
the generators in $\mathcal{O}(SO(2n, \IR))_\cot$ obey relations
$$
M_{IJ} \mt M_{KL}= \big(\co{T_I}{T_K} \coin{T_J}{T_L} \big)^2 M_{KL} \mt M_{IJ}.
$$
Explicitly, for
$\lambda_{ij}= \exp(2i\pi \theta_{ij})$, these
read 
\begin{eqnarray*}
a_{ij} \mt a_{kl} = \lambda_{ik}\lambda_{lj} ~a_{kl}\mt a_{ij} &,  & 
a_{ij} \mt b^*_{kl} = \lambda_{ki}\lambda_{lj} ~b^*_{kl}\mt a_{ij} \; , \nn
\\
a_{ij} \mt b_{kl} = \lambda_{ik}\lambda_{jl} ~b_{kl}\mt a_{ij} &,& 
a_{ij} \mt a^*_{kl} = \lambda_{ki}\lambda_{jl} ~a^*_{kl}\mt a_{ij}  \; , \nn
\\
b_{ij} \mt b_{kl} = \lambda_{ik}\lambda_{lj} ~b_{kl}\mt b_{ij} &,& 
b_{ij} \mt b^*_{kl} = \lambda_{ki}\lambda_{jl} ~b^*_{kl}\mt b_{ij} \; ,
\end{eqnarray*}
together with their $*$-conjugated.  
Moreover, 
$$
M^t \mt Q \mt M =Q~,~~
 M \mt Q \mt M^t=Q~,~~
{\det}_\cot(M)=1
$$
with quantum determinant 
$$
{\det}_\cot(M)=\sum_{\sigma \in {\mathcal P}_{2n}}(-1)^{|\sigma|\,}\big(\!\prod_{\mbox{${}^{~\;I<J}_{\,\sigma_I>\sigma_J}$}} \!\lambda_{\sigma_I\sigma_J}\big)\,
M_{1\sigma_1} \!\cdot_\gamma\ldots  \cdot_\gamma
M_{2n\,\sigma_{2n}} \; .
$$
The twisted  Hopf algebras $\mathcal{O}(SO(n, \IR))_\cot$ were studied in
\cite{resh, schir,cdv} (see also \cite[\S 4.1]{ppca}).

The comodule algebra $(A=\mathcal{O}(SO(2n, \IR)), \Delta)$ is deformed to 
a comodule algebra $(A_\cot=\mathcal{O}({SO}(2n, \IR))_{\mtco}, \Delta)$ with product
\eqref{rmod-twist}. On the generators one has
$$
{M}_{IJ} \mtco {M}_{KL}=  {M}_{IJ} {M}_{KL}\coin{T_J}{T_L} , \quad
$$
and hence
they have commutation relations
$$
{M}_{IJ} \mtco {M}_{KL}=  \left(\coin{T_J}{T_L} \right)^2 {M}_{KL} \mtco {M}_{IJ}.
$$
Explicitly, 
\begin{eqnarray*}
{a}_{ij} \mtco {a}_{kl} = \lambda_{lj} ~{a}_{kl}\mtco {a}_{ij} &,  & 
{a}_{ij} \mtco {b}^*_{kl} = \lambda_{lj} ~{b}^*_{kl}\mtco {a}_{ij}  
\\
{a}_{ij} \mtco {b}_{kl} = \lambda_{jl} ~{b}_{kl}\mtco {a}_{ij} &,& 
{a}_{ij} \mtco {a}^*_{kl} = \lambda_{jl} ~{a}^*_{kl}\mtco {a}_{ij}  
\\
{b}_{ij} \mtco {b}_{kl} = \lambda_{lj} ~{b}_{kl}\mtco {b}_{ij} &,& 
{b}_{ij} \mtco {b}^*_{kl} = \lambda_{jl} ~{b}^*_{kl}\mtco {b}_{ij} 
\end{eqnarray*}
with their $*$-conjugated.   
The $\mathcal{O}(SO(2n, \IR))_\cot$-Hopf--Galois extension $\IC \subseteq \mathcal{O}(SO(2n, \IR))_{\mtco}$ is cleft (but no longer trivial) and has gauge group $\G_{A_\cot}$ isomorphic to $\G_{A}$ in \eqref{gaso}. 
\qed
\end{ex}
\begin{rem}\label{rem:sotwisted}
We stress that the gauge group $\G_{A_\cot}$ is not  the group of characters of 
the braided Hopf algebra $\underline{\hg}$ associated with the Hopf algebra $H_\cot=\mathcal{O}(SO(2n, \IR))_\cot$ (see comment after Lemma \ref{lem:ban}). Indeed $\underline{\hg}\simeq \bh_\cot$ is genuine noncommutative.  
The generators of the algebra  $\bh_\cot$ have product \eqref{prod-hgt0}:
$$
{M}_{IJ} \hpr_\cot {M}_{KL}= {M}_{IJ} {M}_{KL}~ \coin{S(T_I)T_J}{S(T_K)T_L} , \quad
$$
where we used that the product $\hpr$ coincides with that in $H=\mathcal{O}(SO(2n, \IR))$ since $R$ is trivial. 
By using the properties of the abelian cocycle $\gamma$ this product leads to commutation relations
$$
{M}_{IJ} \hpr_\cot {M}_{KL}= \big(\coin{T_I}{T_K} \co{T_J}{T_K} \co{T_I}{T_L} \coin{T_J}{T_L} \big)^2 {M}_{KL} \hpr_\cot {M}_{IJ} \, .
$$
We see that in general the algebra $\underline{\hg}\simeq \bh_\cot$ is noncommutative, with less characters than the commutative algebra $H=\mathcal{O}(SO(n, \IR))$.
\end{rem}
 
 \begin{ex}  {\it Noncommutative principal bundles over affine varieties.} 
For a principal $G$-bundle, $\pi: P \ra P/G$, with $G$  a
semisimple affine algebraic group and $P$, $P/G$ affine varieties, as  
in Example \ref{affinecase} we consider the
  $\O(G)$-Hopf-Galois extension $\O(P/G)\subseteq \O(P)$. 
Given a 2-cocycle $\gamma$ on $\O(G)$, the gauge group of the twisted $\O(G)_\cot$-Hopf-Galois extension $\O(P/G)\subseteq
\O(P)_\cot$, is isomorphic to the gauge group of $\pi: P \ra P/G$.
\qed
\end{ex}

 \subsection{Tensoring Hopf--Galois extensions}~\\[.5em]
The fiber product of a $G$-principal bundle $P\to M$ with a $G'$-principal
bundle $P'\to M$ gives a $G\times G'$-principal
bundle $P\times_{_{\!M\!}} P'\to M$. The corresponding gauge group is the product
of the initial gauge groups. In view of the next examples, we
consider the analogue of this fiber product construction for Hopf--Galois extensions. 
Let $H$ and $K$ be Hopf algebras and consider an $H$-Hopf--Galois extension $B =A^{coH}\subseteq A$
and a $K$-Hopf--Galois extension $B ={A'}^{\, co K}\subseteq A'$ of an algebra $B$. 
Assuming that $B$ is in the center of both $A$ and $A'$, the balanced tensor product 
$A \ot_B A'$ inherits an algebra structure from the tensor product algebra $A \ot A'$.
It  is a  comodule algebra for  the Hopf algebra $H \ot K$ (with usual tensor product algebra and coalgebra structures) with coaction 
$$\delta^{A \ot_B A'} : A \ot_B \!A' \,\ra A \ot_B\! A' \ot H \ot K
\; , \quad
a \ot_B a' \mapsto \zero{a} \ot_B \zero{a'} \ot \one{a} \ot \one{a'} \, .
$$
Then $B=(A \ot_{B\!} A')^{co (H\ot K)}\subseteq A \ot_B A'$ is an $(H\ot K)$-Hopf--Galois
extension; indeed the corresponding canonical map has inverse
$$ {\chi^{-1}}_{|_{1 \ot 1 \ot H \ot K}}: h \ot k \mapsto \tuno{h} \ot_B
\tuno{k} \ot_B  \tdue{h} \ot_B \tdue{k} \in (A \ot_B \! A') \ot_B (A \ot_B\! A')~.
$$ 
Here $\tau_H(h)=\tuno{h} \ot_B   \tdue{h}$ and $\tau_K(k)=\tuno{k}
\ot_B   \tdue{k}$  denote the translation maps of the Hopf--Galois
extensions  $B =A^{coH}\subseteq A$ and $B=A'^{\, co K}\subseteq A'$.

If $(H,R)$ and $(K, R')$ are coquasitriangular
then $(H\ot K, R'')$ is  coquasitriangular with $R''=(R\ot R')\circ
(\id\ot\mbox{flip}\ot\id)$, where $\mbox{flip} : K \ot H \to H\ot K$, $ k\ot h \mapsto h\ot k$.
Moreover, if $A\in \qcr$ and
$A'\in \qcrp$, then $A\ot_B\! A'\in\qcrpp$. 

 \begin{prop}\label{prop:iso}
Let $(H,R)$ and $(K,R')$ be coquasitriangular Hopf algebras. 
Let $B =A^{coH}\subseteq A\in \qcr$ and $B=A'^{\, co K}\subseteq A'\in
\qcrp$ be Hopf--Galois extensions, with the additional assumption that  
$A$ is flat as a right $B$-module and $A'$ is flat as a left $B$-module.
Then the gauge group of the tensor product  $(H \ot K)$-Hopf--Galois
extension  $B= (A \ot_{B\!} A')^{co (H\ot K)}\subseteq A \ot_B A'$ is
isomorphic to the direct product of the gauge groups of the Hopf--Galois extensions $B=A^{coH}\subseteq A$ and $B=A'^{\,
  co K}\subseteq A'$:
\beq\label{iso}
\G_{A \ot_B A'} \,\simeq \;\G_A \times \G_{A'}~.
\eeq
\end{prop}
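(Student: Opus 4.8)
The plan is to prove the statement in the equivalent language of vertical automorphisms and then transport it through the isomorphism $\underline{\theta}$ of Proposition \ref{prop:theta}. Since $A \ot_B A' \in \qcrpp$ and the extension $B = (A \ot_B A')^{co(H\ot K)} \subseteq A \ot_B A'$ is Hopf--Galois (as recorded before the statement), Proposition \ref{prop:theta} gives $\G_{A \ot_B A'} \simeq \Aut{A \ot_B A'}$, and likewise $\G_A \simeq \Aut{A}$ and $\G_{A'} \simeq \Aut{A'}$. Hence it suffices to produce a group isomorphism $\Aut{A \ot_B A'} \simeq \Aut{A} \times \Aut{A'}$. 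The first thing I would record is that, because $A$ is right $B$-flat and $A'$ is left $B$-flat, the maps $a \mapsto a \ot_B 1$ and $a' \mapsto 1 \ot_B a'$ embed $A$ and $A'$ as subalgebras of $A \ot_B A'$, and moreover $A = (A \ot_B A')^{coK}$ and $A' = (A \ot_B A')^{coH}$ (the $K$-coaction acting only on the second leg, the $H$-coaction only on the first). Flatness is precisely what lets one commute the balanced tensor product with the kernel defining the coinvariants, so that $(A \ot_B A')^{coK} = A \ot_B (A'^{\,coK}) = A \ot_B B = A$, and symmetrically for $A'$.

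With these identifications the two maps are natural. Given $\Psi \in \Aut{A \ot_B A'}$, I would restrict it: being $(H\ot K)$-equivariant, $\Psi$ is in particular $K$-equivariant and hence preserves the $K$-coinvariants, so $\Psi(A)\subseteq A$; on $A$ the residual coaction is the $H$-coaction alone, so $\Psi|_A$ is an $H$-equivariant algebra endomorphism of $A$ fixing $B$, that is $\Psi|_A \in \Aut{A}$. Symmetrically $\Psi|_{A'} \in \Aut{A'}$, giving $\Psi \mapsto (\Psi|_A, \Psi|_{A'})$. In the other direction, given $(\F,\F') \in \Aut{A}\times\Aut{A'}$ I would form $\F \ot_B \F'$; this is well defined on the balanced tensor product because $\F$ and $\F'$ are $B$-linear, it is an algebra map (tensor product of equivariant algebra maps, by Proposition \ref{prop:tpr0}; note that the $R''$-braiding between the $H$-comodule $A$ and the $K$-comodule $A'$ is trivial, coming from $R'' = (R \ot R')\circ(\id \ot \mbox{flip} \ot \id)$, so that $A \bot_B A' = A \ot_B A'$), it is $(H\ot K)$-equivariant, and it fixes $B$, hence lies in $\Aut{A \ot_B A'}$.

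To see these are mutually inverse I would use the factorisation $a \ot_B a' = (a \ot_B 1)(1 \ot_B a')$ together with the algebra-map property of $\Psi$: this yields $\Psi(a \ot_B a') = \Psi|_A(a) \ot_B \Psi|_{A'}(a')$, so $\Psi = \Psi|_A \ot_B \Psi|_{A'}$; the composite in the other order is immediate since $(\F \ot_B \F')|_A = \F$ and $(\F \ot_B \F')|_{A'} = \F'$. That the bijection is a group homomorphism is a one-line check in the reversed-composition law of $\Aut{A \ot_B A'}$: if $\Psi_1\cdot\Psi_2 = \Psi_2\circ\Psi_1$, then $(\Psi_1 \cdot \Psi_2)|_A = \Psi_2|_A \circ \Psi_1|_A = \Psi_1|_A \cdot \Psi_2|_A$, and likewise on $A'$, so the restriction map intertwines the group law of $\Aut{A\ot_B A'}$ with the componentwise law of $\Aut{A}\times\Aut{A'}$. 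Composing with the three $\underline{\theta}$ isomorphisms then gives \eqref{iso}.

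The main obstacle is exactly the homological input. Without flatness the inclusions $A, A' \hookrightarrow A \ot_B A'$ need not be injective and the identification of $A$ with the $K$-coinvariants of $A \ot_B A'$ can fail, so that neither the restriction map nor its inverse is well behaved. Everything else — well-definedness on the balanced tensor product, the algebra- and comodule-map properties, and the homomorphism property — reduces to the monoidal bookkeeping already established in Proposition \ref{prop:tpr0} and to the triviality of the cross-braiding forced by the product form of $R''$.
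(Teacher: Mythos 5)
Your proposal is correct and takes essentially the same route as the paper: both reduce via $\underline{\theta}$ to vertical automorphisms, use the flatness hypothesis to identify $A$ and $A'$ with the $K$- and $H$-coinvariants of $A\ot_B A'$ (the paper phrases this through an explicit exact sequence, you phrase it as $\ot_B$ commuting with the kernel defining the coinvariants — the same fact), and then pair the restriction map $\Psi\mapsto(\Psi|_A,\Psi|_{A'})$ with the inverse $(\F,\F')\mapsto\F\ot_B\F'$, checking mutual inversion via the factorisation $a\ot_B a'=(a\ot_B 1)(1\ot_B a')$ and the algebra-map property. Your added remarks (triviality of the cross-braiding coming from the product form of $R''$, and the explicit check that restriction intertwines the reversed-composition group law) are correct refinements of points the paper leaves implicit.
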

\begin{proof}
We consider gauge transformations as vertical automorphisms and show there is a group isomorphism
$$
\Aut{A\otimes_B A'}  \simeq  \Aut{A}\times \Aut{A'} 
$$ 
implemented by the map 
$$
\textsc{F} \mapsto 
\left\{ 
\begin{array}{l}
\F^{\phantom{\prime}}_{\textsc{F}}: a \mapsto \F^{\phantom{\prime}}_{\textsc{F}}(a):=\textsc{F}(a \ot_B 1_{A'}) 
\\[.4em]
\F'_{\textsc{F}} : a' \mapsto \F'_{\textsc{F}} (a'):=\textsc{F}(1_A \ot_B a') 
\end{array} \right. \, .
$$
In order to show that the image of the algebra map $\F^{\phantom{\prime}}_{\textsc{F}}$ is in 
$A \simeq  A\otimes_B B$ we first observe that the short exact
sequence defining $A'^{co K}$,
$$
0\longrightarrow B=A'^{co K} \xrightarrow{i} A' \xrightarrow{\delta^{A'} \:\! -\;\id_{A'} \ot\;\! \eta_K\, } 
\mbox{Im} (\delta^{A'} - \id_{A'} \ot \eta_K) \subset A'\ot K \longrightarrow 0~,
$$ 
and  $B$-flatness of $A$ imply the exactness of the sequence
\begin{multline*}
0\longrightarrow A\simeq A\ot_{B\!} B
\xrightarrow{\id_A\otimes_B\, i} 
A\ot_B A' \xrightarrow{\id_A\otimes_B(\delta^{A'}\:\!-\;\id_{A'}\ot \,\eta_K \,)} 
\\ \to \mbox{Im}(\id_A\otimes_B(\delta^{A'}\:\!-\;\id_{A'}\ot \,\eta_K \,)) \subset
A \ot_B A' \ot K \longrightarrow 0~.
\end{multline*}
Then $\textsc{F}(a \ot_B 1_{A'})\in A\ot_B B \simeq A$ follows by
showing that $\textsc{F}(a \ot_B 1_{A'})$ is in the kernel of
$\id_A\otimes_B(\delta^{A'}\:\!-\;\id_{A'}\ot \,\eta_K)$, an easy consequence of the $(H\ot K)$-equivariance
of $\textsc{F}$ and the identity $(\id_{A\ot_{B \:\!} A'}\ot \varepsilon_H\ot
\id_K)\delta^{A\ot_{B\!\!}\; A'}=\id_A\ot_B\delta^{A'}$. Moreover,
 $H$-equivariance of $\F^{\phantom{\prime}}_{\textsc{F}}$ 
 follows from the identity $(\id_{A\ot_{B \:\!} A'}\ot \id_H\ot
 \varepsilon_K)
\delta^{A\ot_{B\!\!}\; A'}=(\id_A\ot_B\mbox{flip})\circ (\delta^{A}\ot_B\id_{A'})$.

Similarly one finds that the algebra map $\F'_{\textsc{F}}$ is a $K$-equivariant
map $A'\to B \otimes_B {A'}\simeq A'$.

The map ${\textsc{F}} \mapsto
(\F^{\phantom{\prime}}_{\textsc{F}},\F'_{\textsc{F}})$ is an
isomorphism with inverse $(\F^{\phantom{\prime}},\F')\mapsto {\textsc{F}}_{
(\F^{\phantom{\prime}},\F')}:=\F\otimes_B\F'$.
This is a left inverse:
$ {\textsc{F}}\to (\F^{\phantom{\prime}}_{\textsc{F}},\F'
_{\textsc{F}})\to \F^{\phantom{\prime}}_{\textsc{F}}\ot_B\F'
_{\textsc{F}}={\textsc{F}}$ because $\F^{\phantom{\prime}}_{\textsc{F}}(a) \ot_B\F' _{\textsc{F}}(a') = (\F^{\phantom{\prime}}_{\textsc{F}}(a) \ot_B 1)(1 \ot_B \F' _{\textsc{F}}(a'))=
{\textsc{F}}(a\ot_B 1_{A'}) {\textsc{F}}(1_{A}\ot_B a')={\textsc{F}}(a\ot_B a')$. 
It is easily seen to be a right inverse as well.
$(\F^{\phantom{\prime}},\F')\mapsto \F^{\phantom{\prime}}\ot_B \F' \mapsto
 (\F^{\phantom{\prime}}_{\F^{\phantom{\prime}}\ot_B\F'},\F' _{\F^{\phantom{\prime}}\ot_B\F'})=
(\F^{\phantom{\prime}},\F')$.
\end{proof}

\begin{ex}\label{ex:7hge}
Let $H= \O(SU(2))$ be the Hopf coordinate algebra of the
affine algebraic group $SU(2)$ and let $\O(S^n)$ denote the coordinate algebra
of the classical sphere $S^n$.
It is well-known that $A= \O(S^7)$  is a Hopf--Galois extension of   $B=\O(S^4)$ for the coaction of $H$ dual to the classical principal action of $SU(2)$
on $S^7$ defining the $SU(2)$-Hopf bundle over $S^4$. The module $A=
\O(S^7)$ is flat over $B=\O(S^4)$ (it is actually faithfully flat). Let  $K=\O(U(1))$ be the Hopf algebra of coordinate functions on
$U(1)$ and consider the trivial Hopf--Galois extension $B\subseteq B\ot
K$. Again $B\ot K$ is flat as a $B$-module.
 In accordance with the theory above, we consider the Hopf--Galois extension for the Hopf algebra 
$ \O(SU(2)) \ot  \O(U(1))\simeq  \O(SU(2) \times  U(1))$ with comodule
algebra $\O(S^7)\ot_B B\ot  \O(S^1) \simeq \O(S^7)\ot  \O(S^1)\simeq \O(S^7\times  S^1) $
(with obvious coaction of $\O(U(1))$ on $\O(S^1)$) and subalgebra of
coinvariant elements again $B=\O(S^4)$. 
 
By Proposition \ref{prop:iso}, the commutative Hopf Galois extension 
$\O(S^4)\subseteq \O(S^7)\ot\O(S^1)$ has gauge group ${\G}_{\O(S^7)}\times {\G}_{\O(S^4)\otimes \O(U(1))}$.
Here ${\G}_{\O(S^7)}$ is the gauge group of the 
$SU(2)$-principal bundle $S^7\to S^4$, while
${\G}_{\O(S^4)\otimes \O(U(1))}=\{\f: \O(U(1)) \ra \O(S^4) \mbox{ algebra maps} \}$ is the gauge group of the trivial 
$U(1)$-principal bundle over $S^4$.

We have been working with affine varieties {(noncommutative polynomial algebras of coordinates given by generators and relations)}.
Completion in the smooth category gives the gauge group of the principal  $SU(2) \times U(1)$-bundle
$S^7\times S^1\to S^4$. Being the latter the product of the
gauge group of the $SU(2)$-Hopf bundle  and the group of
$U(1)$ valued functions on $S^4$, the
smooth completion of the gauge group of the commutative Hopf--Galois extension
$\O(S^4)\subseteq \O(S^7)\ot \O(S^1)$ is this classical gauge group.
\qed\end{ex}

\begin{ex}
We present a twisted version of the previous example. To this aim we
first twist the Hopf $*$-algebra  $\O(SU(2)) \ot  \O(U(1))$. 
The commutative Hopf $*$-algebra  $\O(SU(2))$  of coordinate functions on $SU(2)$ is 
generated by the entries of the matrix $\tilde{u} = \begin{pmatrix} a & b \\ c & d\end{pmatrix}$, with $c=-b^*$,  $d=a^*$ and
$aa^*+bb^*=1$. The commutative Hopf $*$-algebra $\O(U(1))$ of coordinate functions on $U(1)$
has generators  $w, {w^*}$,  with  $w{w^*}=w^* w=1$.
Consider then the tensor product Hopf $*$-algebra $\O(SU(2))\ot \O(U(1))$ and the Hopf $*$-ideal $I=\langle b \ot 1, c \ot 1\rangle$. 
The quotient Hopf algebra $\O(SU(2))\ot \O(U(1))/I$ is easily seen to be isomorphic to the Hopf $*$-algebra 
$\O (\mathbb{T}^2)$  of functions on the $2$-torus:
$$
\O(SU(2))\ot \O(U(1))/I \simeq \O (\mathbb{T}^2) \; , \quad (a \ot 1) \mapsto t_1 \; , \quad (1 \ot w) \mapsto t_2 \;.
$$
The 2-cocycle $\gamma$ on $\O (\mathbb{T}^2)$ is the one given in \eqref{427cocycleTn}; we choose the 
convention
\beq
\co{t_1}{t_2} = \co{t_2}{t_1}^{-1} = \exp( i \pi \tfrac{1}{2} \theta) \; .
\eeq
Via the quotient map $\pi:  \O(SU(2))\ot \O(U(1)) \ra \O(SU(2))\ot \O(U(1))/I \simeq \O (\mathbb{T}^2)$ it lifts to a $2$-cocycle $\gamma$ on $\O(SU(2))\ot \O(U(1))$ defined,
for all $\ell,\ell' \in \O(SU(2))\ot \O(U(1))$, by (see \cite[Lem.4.1]{ppca})
$$
\co{\ell}{\ell'} := \co{\pi(\ell)}{\pi(\ell')} .
$$ 
With this 2-cocycle we deform the Hopf algebra $\O(SU(2))\ot \O(U(1))$
to the new Hopf algebra $\left( \O(SU(2))\ot \O(U(1))\right)_\gamma$
with product given in \eqref{hopf-twist}. Working out the commutation
relations, one finds this Hopf algebra to be generated by the elements
\beq\label{gen}
a:=a \ot 1 \; ,  \quad b:=b \ot 1 \; ,   \quad w:= 1 \ot w \; , \;   \mbox{(together with } a^* , ~ b^* , ~{w^*})
\eeq
where $a$  is central   (and so is $a^*$), modulo the further
commutation relations 
\beq \label{crA}
b \mt b^* = b^* \mt b \; , \quad w \mt {w^*} = {w^*} \mt w \; , \quad
b \mt  w= q  ~ w \mt b \; , \quad b  \mt  {w^*} = \bar{q}  ~  {w^*} \mt b \; ,
\eeq 
with $q:= \exp( 2 \pi i\,\theta)$, and
modulo the relations
\beq\label{relA}
a \mt a^* + b \mt b^* =1 \quad, \quad
w \mt  {w^*}= 1 ~.
\eeq
Counit and coproduct are undeformed and given by
$\varepsilon(a)=1,~\varepsilon(b)=0,~\varepsilon(w)=1$ and 
\beq\label{cop0}
\Delta(a)=a \ot a- b \ot b^* \; , \quad \Delta(b)=a \ot b+ b \ot a^* \; , \quad \Delta(w)=w \ot w \;.
\eeq
The antipode, $S_\cot:= u_\cot *S*\bar{u}_\cot$, where
$u_\cot$ and $\bar{u}_\cot$ are in \eqref{uxS}, on the generators reads
\beq\label{anti0}
S_\cot(a)= S(a)=a^*, \qquad S_\cot(b)= S(b)=-b,  \qquad S_\cot(w)= S(w)={w^*}.
\eeq

We next deform the Hopf--Galois extension of Example \ref{ex:7hge}
 to a new Hopf--Galois extension over $B=\O(S^4)$ with Hopf algebra just
 $\big(\O(SU(2)) \ot  \O(U(1))\big)_\gamma$. 
The total space is the noncommutative algebra $ \big(\O(S^7)\ot
\O(S^1)\big)_\gamma$, deformation of the algebra $\O(S^7)\ot  \O(S^1)$
as in \eqref{rmod-twist}. Explicitly, since $\O(S^7)\ot  \O(S^1)$ is
the commutative $*$-algebra
generated by  $z_j, z^*_j$, for $j=1,2,3,4$, and $w, w^*$, then  $\big(\O(S^7)\ot  \O(S^1)\big)_\gamma$ is the noncommutative $*$-algebra  
generated by 
$z_j, z^*_j$, and $w, w^*$, modulo the commutation relations
\beq\label{crS}
z_j \mtco z_k= z_k \mtco z_j 
\; ; \quad
z_j \mtco w = \exp(-i \pi \theta)~ w \mtco z_j
\; ; \quad
z^*_j \mtco w = \exp(i \pi \theta)~ w \mtco z^*_j \; , 
\eeq
for each $j,k=1,\dots, 4$ (and their $*$-conjugates), and the relations
\beq\label{relS}
\sum_{j=1}^4 z^*_j \mtco z_j= 1 \quad ; \quad 
{w}^* \mtco w=1 ~.
\eeq
Thus the subalgebras  $\O(S^7)$ and $\O(S^1)$ remain
commutative, and the noncommutativity can therefore be ascribed
just to the tensor product in $\big(\O(S^7)\ot \O(S^1)\big)_\gamma
\simeq \O(S^7\times S^1)_\gamma$,
suggesting the notation $\O(S^7)\ot_\gamma\O(S^1)$. 
Indeed this algebra is of the same type as those 
obtained in \cite{dvl17} as noncommutative products of spheres, there denoted $\O(S^7\times_\gamma S^1)$.

By Proposition \ref{thm:gamma-ad} we conclude that the gauge group of
the Hopf--Galois extension $\O(S^4)\subseteq \big(\O(S^7)\ot
\O(S^1)\big)_\gamma$ is isomorphic to that of the commutative
Hopf--Galois extension $\O(S^4)\subseteq \O(S^7)\ot\O(S^1)$ described in
the previous example (that is, the gauge group of the $SU(2)$-Hopf bundle times the group of $U(1)$ valued functions on $S^4$).
\qed\end{ex}

\begin{ex}\label{ex-lungo}
{\it Reduction of the ``structure group''.}
Given any odd dimensional sphere $S^{2n-1}$, the cartesian product $S^{2n-1}\times S^1$ carries a diagonal action of $\IZ_2$, with the non trivial generator of the latter sending a point on a sphere to its antipodal point. 
The quotient $(S^{2n-1} \times S^1) / \IZ_2$ is a copy of 
$S^{2n-1} \times S^1$ : if $z_j, z^*_j $, $j=1, \dots, n$, are coordinates on $S^{2n-1}$ with 
$\sum_{j=1}^n z^*_j z_j= 1$ and $w$ is the coordinate of $S^1$ with $w^* w=1$,  coordinates for the quotient are given by 
$x_j =z_j  w$ and $y = (w^*)^2$. 

On the other hand, with the group structures of $S^3 = SU(2)$ and $S^1 = U(1)$, the quotient group $(SU(2)\times U(1)) / \IZ_2$ is isomorphic to the group $U(2)$. 
Consider   the principal $SU(2) \times
S^1$ bundle $S^7 \times S^1\to S^4$. The subgroup $\IZ_2$ of
$SU(2)\times U(1)$ acts on $S^{7}\times S^1$ as above by flipping
antipodal points; then the quotient
leads to an $(SU(2)\times U(1)) / \IZ_2 \simeq U(2)$
bundle with total space $(S^7\times S^1) / \IZ_2 \simeq S^7\times S^1$ and base space still $S^4$.
\qed\end{ex}

We next present an Hopf--Galois description of
this construction that also applies to the noncommutative Hopf--Galois
extension  $\O(S^4)\subseteq \big(\O(S^7)\ot  \O(S^1)\big)_\gamma$ and
leads to an $\O(U_q(2))$-Hopf--Galois extension, with corresponding gauge group.

\begin{ex}{\it Twisting Example \ref{ex-lungo}}. 
The $*$-algebras $\big(\O(SU(2))\otimes \O(U(1))\big)_\gamma$ and $\big(\O(S^7)\ot
\O(S^1)\big)_\gamma$ are both $\IZ_2$-graded with the generators that are
 odd, while the commutation relations, determinant and radius relations
\eqref{crA}-\eqref{relA} and \eqref{crS}-\eqref{relS} are even; the $*$-involutions are grade preserving.

Firstly, consider the $\IZ_2$-invariant $*$-algebra $\big(\O(S^7)\ot \O(S^1)\big)_\gamma ^{\IZ_2}\subseteq \big(\O(S^7)\ot
\O(S^1)\big)_\gamma $. It is easy seen to be generated by the elements $x_j:=z_j \mtco w$ and
$y:={w^*}\mtco {w^*}$, and their $*$-conjugates $x_j^*$ and $y^*$, modulo 
commutation relations induced by \eqref{crS}:
$$
x_j \mtco x_k\,=\, x_k\mtco x_j~,~~ x_j \mtco y\,=\,q \:y \mtco x_j~,~~ x_j^* \mtco y\,=\,{q^{-1}}\: y \mtco x^*_j~,
$$ 
(with their $*$-conjugates) and radius  relations $\sum_j x_j^* \mtco x_j=1\,,\; y \mtco  y^*=1$ induced by \eqref{relS}.
 Here $q=exp(2\pi i\,\theta)$. We see that $\big(\O(S^7)\ot
\O(S^1)\big)_\gamma ^{\IZ_2}$ is again a deformation of the algebra of
coordinates of the affine variety $S^7\times S^1$ as in \eqref{crS}, but with different
noncommutativity parameter $q=exp(2\pi i\,\theta)$ rather than $exp(-i\pi \,\theta)$.   

Next, the even $*$-subalgebra $\big(\O(SU(2))\otimes \O(U(1))\big) ^{\IZ_2}_\gamma$ 
is also a Hopf $*$-subalgebra of $\big(\O(SU(2))\otimes \O(U(1))\big)_\gamma$ since 
the coproduct and the antipode are grade preserving and hence restrict to the even subalgebra.
This Hopf $*$-subalgebra is easily seen to be the algebra generated by the elements
\beq\label{gen-inv}
\alpha:=a \mt w \; , 
~~\beta:=b \mt w \; ,~~
D:={w}\mt w\; , ~
\eeq
and their $*$-conjugated elements $\alpha^*={w^*} \mt a^*\,,~ \beta^*={w^*}\mt b^* \, , ~D^{*}={w}^{*}\mt {w}^{*}$,
modulo the commutation relations induced by \eqref{crA}, which are worked out to be
\begin{align}\label{rel-inv}
& \alpha \mt \beta = q^{-1} \beta \mt \alpha \, , \quad \alpha \mt \beta^* = q \beta^* \mt \alpha \, , \nn \\
& D \mt \alpha = \alpha \mt D \, , \quad D \mt \alpha^* = \alpha^* \mt D \, ,  \nn \\
& D\mt \beta = q^{-2} \beta \mt D \, , \quad D\mt \beta^* = q^{2} \beta^* \mt D  
\end{align}
(together with their $*$-conjugates) and the relations
\beq \label{id-inv}
\alpha\mt \alpha^*+\beta\mt\beta^*=1\;,\quad D\mt D^{*}=1~.
\eeq
When restricting the coproduct in \eqref{cop0} to the
subalgebra one obtains: 
\beq\label{copz2}
\Delta(\alpha)=\alpha \ot \alpha - q \beta \ot (\beta^* \mt D)\; , \quad \Delta(\beta)=\alpha \ot \beta + \beta \ot (\alpha^* \mt D) \; , \quad \Delta(D)=D \ot D \;.
\eeq
Similarly, restricting the counit one has $\varepsilon(\alpha)=1,
\varepsilon(\beta)=0, \varepsilon(D)=1$, and finally for the antipode
\eqref{anti0} one finds
\beq\label{antiz2}
S(\alpha)=\alpha^*,  \quad S(\beta) = -q (\beta \mt D^*),  \quad S(D)={D^*}.
\eeq 

Paralleling the classical result for the group $(SU(2) \times U(1)) /
\IZ_2 \simeq U(2)$, the Hopf $*$-algebra $\big(\O(SU(2))\otimes \O(U(1))\big) ^{\IZ_2}_\gamma$ (with the structures in \eqref{copz2} and \eqref{antiz2}) is isomorphic to the cotriangular quantum group $\O(U_{q}(2))$
(the multiparametric quantum group $U_{q,r}(2)$ with $r=1$, see e.g. \cite{schir, resh}). 
We recall that $\O(U_q(2))$ is generated by the matrix entries,  
 $
 u:= \begin{pmatrix} \alpha & \beta \\ \gamma & \delta\end{pmatrix},
 $
and by the inverse $D^{-1}$ of the quantum determinant $D=\alpha \delta-q^{-1} \beta\gamma$. These generators 
satisfy the FRT commutation relations  
 $\R^{ji}_{kl} u_{km}u_{ln}= u_{ik} u_{jl} \R^{lk}_{mn}$, 
with matrix $\R=\mathrm{diag}(1,q^{-1} ,q,1)$.
Explicitly, we have
\begin{eqnarray}
\alpha \beta = q^{-1} \beta \alpha \,\, ; \quad 
\alpha \gamma = q \gamma \alpha \,\, ; \quad  
\beta \delta = q \delta \beta \,\, ; \quad 
\gamma \delta = q^{-1} \delta \gamma \,\, ; \quad 
\beta \gamma = q^2 \gamma \beta \,\, ; \quad 
\alpha \delta = \delta \alpha \, \nonumber\\[.2em]
\alpha {D}^{-1}= {D}^{-1} \alpha \; ; \quad \beta {D}^{-1} = q^{-2} {D}^{-1} \beta
\; ; \quad \gamma {D}^{-1} = {q}^2 {D}^{-1} \gamma
\; ; \quad \delta {D}^{-1} =  {D}^{-1} \delta \, .~~~\!\;\!~~~~~~~\label{FRTU2}
\end{eqnarray}
The costructures are 
$\Delta(u)=u \overset{.}{\otimes} u$, $\Delta(D^{-1})=D^{-1}\otimes D^{-1}$, and 
$\varepsilon(u)=  \mathbb{I}_2$, $\varepsilon(D^{-1})= 1$,
while the antipode is
\beq\label{antipU2}
 S(u)= {D}^{-1} \begin{pmatrix}
\delta & -q^{-1} \beta
\\ -q \gamma & \alpha
\end{pmatrix} ~,\qquad S(D^{-1}) = D \, . 
\eeq 
The $*$-structure defining the real form $\O(U_{q}(2))$ requires the
deformation parameter to be a phase,  we set $q=exp(2\pi i
\theta)$ as in \eqref{crA}. The $*$-structure is then given
by 
\beq\label{*A}
\begin{pmatrix} \alpha^* & \beta^* \\ \gamma^* & \delta^*\end{pmatrix}
=
{D}^{-1} \begin{pmatrix}
\delta & -q \gamma
\\ -q^{-1} \beta & \alpha
\end{pmatrix}~, \qquad(D^{-1})^*=D~.
\eeq
The defining relations \eqref{rel-inv} and
\eqref{id-inv} are the same as the FRT commutation relations
\eqref{FRTU2} and
the relations $D=\alpha\delta-q^{-1}\beta\gamma$, $DD^{-1}=1$ 
with  $\delta=D\alpha^*$ and $\gamma=-q^{-1}D\beta^*$
as given in \eqref{*A}. The costructures and antipode too in
\eqref{copz2} and \eqref{antiz2} are  those of $\O(U_q(2))$,
thus showing the isomorphism $\big(\O(SU(2))\otimes
\O(U(1))\big)_\gamma^{\IZ_2}\simeq\O(U_q(2))$ as Hopf $*$-algebras. We
have obtained the quantum group $\O(U_q(2))$ from the ``quantum double cover'' 
$\big(\O(SU(2))\otimes \O(U(1))\big)_\gamma$.\footnote{The coproduct \eqref{copz2} and
  antipode \eqref{antiz2} also show a semidirect structure of the Hopf
  $*$-algebra $\O(U_q(2))$
that corresponds to the semidirect product $SU(2) \rtimes U(1) \simeq U(2)
\simeq (SU(2) \times U(1)) / \IZ_2 
$, obtained by decomposing $U(2)$ matrices 
as {\tiny{$
 \begin{pmatrix} \alpha & \beta \\ \gamma & \delta\end{pmatrix}\,=\,
 \begin{pmatrix} \alpha & \beta \\ -D\beta^* & D\alpha^*\end{pmatrix}=
 \begin{pmatrix} 1 & 0 \\ 0 & D\end{pmatrix}
 \begin{pmatrix} \alpha & \beta \\ -\beta^* & \alpha^*\end{pmatrix}\,.$}}}
Finally, the coaction 
$$
\big(\O(S^7)\ot \O(S^1)\big)_\gamma \to
\big(\O(S^7)\ot \O(S^1)\big)_\gamma \otimes \big(\O(SU(2))\times \O(U(1))\big)_\gamma
$$ 
is even and therefore,  restricted to $\big(\O(S^7)\ot
\O(S^1)\big)_\gamma^{\IZ_2} $, defines an $\O(U_q(2))$-coaction 
$$
\big(\O(S^7)\ot
\O(S^1)\big)_\gamma^{\IZ_2} \to \big(\O(S^7)\ot
\O(S^1)\big)_\gamma^{\IZ_2} \ot \O(U_q(2)).
$$
It then follows that the subalgebra  $\O(S^4)\subseteq\big(\O(S^7)\ot \O(S^1)\big)_\gamma$  
of $\big(\O(SU(2))\ot\O(U(1))\big)_\gamma$-coinvariants, being even, 
coincides with the subalgebra  $\O(S^4)\subseteq\big(\O(S^7)\ot\O(S^1)\big)_\gamma^{\IZ_2}  $  of 
$\O(U_q(2))$-coinvariants. 
Furthermore, since the canonical map of the initial Hopf--Galois extension 
$\O(S^4)\subseteq\big(\O(S^7)\ot\O(S^1)\big)_\gamma$ is even, 
the extension $\O(S^4)\subseteq\big(\O(S^7)\ot\O(S^1)\big)_\gamma^{\IZ_2}$ is Hopf--Galois as well.

We further observe that the $\O(U_q(2))$-Hopf--Galois extension is a 2-cocycle deformation 
of the commutative  $\O(U(2))$-Hopf--Galois extension: the 2-cocycle on $\O(U(2))\simeq\big(\O(SU(2))\otimes \O(U(1))\big)
^{\IZ_2}$ is the restriction of the one on $\O(SU(2))\otimes \O(U(1))$. The corresponding
deformation of $\big(\O(S^7)\ot\O(S^1)\big)^{\IZ_2}$ via the $\O(U(2))$-coaction is then the deformation of $\big(\O(S^7)\ot\O(S^1)\big)^{\IZ_2}\subseteq \O(S^7)\ot\O(S^1)$ via the $\O(SU(2))\otimes \O(U(1))$ coaction. 
From Proposition  \ref{thm:gamma-ad} we conclude that the
gauge group of this noncommutative $\O(U_q(2))$-Hopf--Galois extension is undeformed.
\qed\end{ex}

\addtocontents{toc}{\protect\setcounter{tocdepth}{1}}

\appendix
\section{The canonical map as a morphism of relative Hopf modules}\label{app:hge}

As mentioned in \S \ref{sec:hge}, for a generic Hopf algebra $H$ (that is, not necessarily coquasitriangular), the canonical map \eqref{canonical}  of a Hopf--Galois extension $B\subseteq A$,
\begin{align*}
\can = (m \ot \id) \circ (\id \otimes_B \delta^A ) : A \otimes_B A \longrightarrow A \ot H ~  ,   
\quad  a' \ot_B a \longmapsto a' \zero{a} \ot \one{a} ,
\end{align*} 
 was shown in \cite[\S 2]{ppca} to be a
  a morphism in the category ${_A{\mathcal  M}_A}^{H}$ of relative Hopf modules.

It was proved in \cite[\S 1.1]{Sch90} that 
$\can$ is a morphism in ${{\mathcal  M}_A}^{H}$ when 
$A \otimes_B A$ and $A \ot H$ are seen as objects in ${{\mathcal  M}_A}^{H}$ with right $A$-module structures  
\beq \label{m2}
(a \ot_B a')a'':=  a \ot_B a'a'' \quad \mbox{and} \quad (a \ot h) a'= a \zero{a'} \ot h \one{a'}
\eeq
and right $H$-coactions: for all $a,a',a'' \in A$, $h \in H$, 
\beq \label{m3}
 a \ot_B a' \mapsto a \ot \zero{a'}  \ot \one{a'} \quad \mbox{and} \quad 
 a \ot h \mapsto a \ot \one{h} \ot \two{h}  \, .
 \eeq

Moreover,  $\can$ is a morphism in ${_A{\mathcal  M}}^{H}$ when 
$A \otimes_B A$ and $A \ot H$ are considered to be objects in ${_A{\mathcal  M}}^{H}$ with left $A$-module structures given by left multiplication on the first factors and right $H$-coactions:
for all $a,a' \in A$, $h \in H$,
\beq\label{m1}
a \ot_B a' \mapsto \zero{a} \ot a' \ot \one{a} \quad \mbox{and} \quad 
a \ot h \mapsto \zero{a} \ot \two{h} \ot \one{a}S(\one{h}) \, .
\eeq

In \cite[\S 2]{ppca} both $A \otimes_B A$ and $A \ot H$ 
were shown to be objects in  ${_A{\mathcal M}_A}^{H}$, with $\can$ a morphism in the category ${_A{\mathcal  M}_A}^{H}$ of relative Hopf modules. As  already recalled in  \S \ref{sec:hge}, 
the left 
$A$-module structures are the left multiplication on the first factors and the 
right $A$-actions as in \eqref{m2}.
The tensor product $A\ot_B A$ carries the 
right $H$-coaction in \eqref{AAcoact}: 
\beq\label{app-AAcoact}
\delta^{A\otimes_B A}: A\otimes_B A\to A\otimes_B A\otimes H, \quad a\otimes_B a'  \mapsto  \zero{a}\otimes_B \zero{a'} \otimes  \one{a}\one{a'} \, 
\eeq 
for all $a,a'\in A$.   
The right $H$-coaction on $A \ot H$ 
is given by \eqref{AHcoact}: for all $ a\in A,~ h \in H$,
\beq\label{app-AHcoact}
\delta^{A\otimes H}(a\otimes h) = \zero{a}\otimes \two{h} \otimes \one{a}\,S(\one{h})\, \three{h} \in A \ot H \ot H \, .
\eeq 

These two approaches can be related: the coactions \eqref{app-AAcoact} and \eqref{app-AHcoact} can be obtained as the compositions of the coactions \eqref{m1} and \eqref{m3} in the sense of the following lemma. 
\begin{lem} 
Let $(V,\delta_1,\delta_2)$ be a relative Hopf module in $\M^{H,H}$, that is $V$ is a $\bbK$-module endowed with two coactions $\delta_1: v \mapsto \zero{v} \ot \one{v}$ and $\delta_2:v \mapsto \uzero{v} \ot \uone{v}$ of an Hopf algebra $H$ compatible in the sense that 
\beq
(\delta_1 \ot \id_H)\circ \delta_2 = (\id_V \ot \textup{flip}) \circ (\delta_2 \ot \id_H)\circ \delta_1
\eeq
that is  for all $v \in V$ 
\beq \label{MHH}
\zero{(\uzero{v})} \ot \one{(\uzero{v})} \ot  \uone{v}=
\uzero{(\zero{v})} \ot \one{v} \ot \uone{(\zero{v})}  \, .
\eeq
Then the compositions
\beq\label{comp-coactions}
\delta_2 \circ \delta_1:=(\id_V \ot m_H)\circ  (\delta_2 \ot \id_H)\circ \delta_1 
: \quad v \mapsto  \uzero{(\zero{v})}  \ot \uone{(\zero{v})} \one{v} 
\eeq
\beq\label{comp-coactions2}
\delta_1 \circ \delta_2:=(\id_V \ot m_H)\circ  (\delta_1 \ot \id_H)\circ \delta_2 
: \quad v \mapsto  \zero{(\uzero{v})} \ot \one{(\uzero{v})}  \uone{v} 
\eeq
define new coactions on $V$.
\end{lem}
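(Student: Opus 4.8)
The plan is to verify directly that each of the two composite maps satisfies the defining axioms of a right $H$-coaction, namely counitality and coassociativity. I would treat $\delta_2\circ\delta_1$ in detail, the case of $\delta_1\circ\delta_2$ being entirely parallel since the compatibility \eqref{MHH} places the two coactions on equal footing.

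Counitality is the routine part. Applying $\id_V\ot\varepsilon$ to $(\delta_2\circ\delta_1)(v)=\uzero{(\zero{v})}\ot \uone{(\zero{v})}\one{v}$ and using that $\varepsilon$ is multiplicative gives $\uzero{(\zero{v})}\,\varepsilon(\uone{(\zero{v})})\,\varepsilon(\one{v})$. The counit axiom for $\delta_2$ (applied to $\zero{v}$) collapses the first two factors to $\zero{v}$, and the counit axiom for $\delta_1$ then collapses $\zero{v}\,\varepsilon(\one{v})$ to $v$.

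The substance is coassociativity, and the cleanest route is conceptual. I would first note that, by the bialgebra axioms, the multiplication $m_H\colon H\ot H\to H$ is a morphism of coalgebras, and that $\delta_2\circ\delta_1=(\id_V\ot m_H)\circ\rho$ where $\rho:=(\delta_2\ot\id_H)\circ\delta_1\colon v\mapsto \uzero{(\zero{v})}\ot\uone{(\zero{v})}\ot\one{v}$. The key claim is that the compatibility \eqref{MHH} is exactly what makes $\rho$ a right coaction of the tensor product coalgebra $H\ot H$: read appropriately, \eqref{MHH} states that $\delta_2$ is a morphism of $H$-comodules for the coaction $\delta_1$ (with the $H$-leg produced by $\delta_2$ a spectator), i.e. that the two coactions commute, which is precisely the statement that $V$ is an $(H\ot H)$-comodule via $\rho$. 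Granting this, $\delta_2\circ\delta_1$ is the pushforward of the comodule $\rho$ along the coalgebra map $m_H$, and the pushforward of a comodule along a coalgebra morphism is again a comodule — a one-line Sweedler check using only that $m_H$ intertwines $\Delta$ and $\varepsilon$.

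The main obstacle is therefore the verification that $\rho$ is coassociative, $(\rho\ot\id_{H\ot H})\circ\rho=(\id_V\ot\Delta_{H\ot H})\circ\rho$. Expanding $(\rho\ot\id)\circ\rho$ forces one to apply $\delta_1$ to the factor $\uzero{(\zero{v})}$, producing a term in which a $\delta_1$-coaction is nested inside a $\delta_2$-coaction; the role of \eqref{MHH} is precisely to swap this nested order, after which the coassociativity of $\delta_1$ and of $\delta_2$ can each be applied to their own legs and matched against $(\id_V\ot\Delta_{H\ot H})\circ\rho$, using that $\Delta$ is multiplicative. This is routine Sweedler bookkeeping once the insertion point of \eqref{MHH} is located, the only care being to track which iterated indices belong to which coaction. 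Finally, for $\delta_1\circ\delta_2$ I would repeat the argument with the roles of $\delta_1$ and $\delta_2$ interchanged, using the coaction $\rho':=(\delta_1\ot\id_H)\circ\delta_2$ and pushing forward along $m_H$; note that $\delta_2\circ\delta_1$ and $\delta_1\circ\delta_2$ need not coincide unless $H$ is commutative, which is exactly why both statements require a separate check.
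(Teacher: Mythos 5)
Your proposal is correct, but it takes a genuinely different route from the paper's proof. The paper verifies coassociativity of $\delta:=\delta_2\circ\delta_1$ head-on: it expands $(\delta\ot\id_H)\circ\delta$ and $(\id_V\ot\Delta)\circ\delta$ in Sweedler notation and matches them using the comodule property of $\delta_2$, two applications of the compatibility \eqref{MHH}, and coassociativity of $\delta_1$; the second composition is then dispatched by observing that \eqref{MHH} is symmetric under $\delta_1\leftrightarrow\delta_2$ (your "repeat with roles interchanged" is the same observation in different words). You instead factor $\delta_2\circ\delta_1=(\id_V\ot m_H)\circ\rho$ with $\rho=(\delta_2\ot\id_H)\circ\delta_1$, identify \eqref{MHH} as the statement that the two coactions commute, i.e. that $\rho$ is a coaction of the tensor product coalgebra $H\ot H$, and then corestrict along $m_H$, which is a coalgebra morphism precisely by the bialgebra axiom. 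This buys conceptual clarity: the compatibility \eqref{MHH} is used exactly once (in proving coassociativity of $\rho$), the bialgebra structure of $H$ enters only in the corestriction step, and the Sweedler bookkeeping for $\rho$ is cleaner than the paper's because the multiplication $m_H$ is never entangled in it; the cost is invoking the auxiliary notions of tensor product coalgebra and pushforward of comodules, and the coassociativity check for $\rho$, which you defer as routine, is still a computation of essentially the same kind as the paper's. One small slip in wording: multiplicativity of $\Delta$ is not needed to verify that $\rho$ is $\Delta_{H\ot H}$-coassociative (that uses only \eqref{MHH} and the two comodule axioms); it is needed, as you correctly say elsewhere, only to know that $m_H$ intertwines the coalgebra structures. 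This does not affect the validity of the argument.
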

\begin{proof}
Notice that condition \eqref{MHH} is symmetric for the exchange $\delta_1 \leftrightarrow \delta_2$ and so it is enough to prove the result for (say) $\delta:=\delta_2 \circ \delta_1$. 
It is easy to verify that $(\id_V \ot \varepsilon)\circ \delta =\id_V$.
We have to show  that 
$(\id_V \ot \Delta)\circ \delta = (\delta \ot \id_H)\circ \delta$.  Let $v \in V$, 
then
\begin{align*}
(\id_V \ot \Delta)\circ \delta (v) & = 
\uzero{(\zero{v})}  \ot \Delta(\uone{(\zero{v})} \one{v}  )
\\
 & = 
\uzero{(\zero{v})}  \ot \one{(\uone{(\zero{v})})} \one{v}   \ot \two{(\uone{(\zero{v})})} \two{v}
\\
 & = 
\uzero{(\zero{v})}  \ot \uone{(\zero{v})} \one{v}   \ot \utwo{(\zero{v})} \two{v}
\end{align*}
where we  used the fact that $\delta_2$ is a comodule map:
$$\uzero{v}  \ot \one{(\uone{v})}   \ot \two{(\uone{v})}  =
\uzero{(\uzero{v})}  \ot \uone{(\uzero{v})}   \ot \uone{v}  =:
\uzero{v}  \ot \uone{v}   \ot \utwo{v}  \, ,
$$
for $v \in V$. On the other hand
\begin{align*}
(\delta \ot \id_H)\circ \delta (v) &=
\delta(\uzero{(\zero{v})})  \ot \uone{(\zero{v})} \one{v} 
\\
&=
\uzero{(\zero{(\uzero{(\zero{v})})})}  \ot \uone{(\zero{(\uzero{(\zero{v})})})} \one{(\uzero{(\zero{v})})}  \ot \uone{(\zero{v})} \one{v} 
\\
&=
\zero{(\uzero{(\uzero{(\zero{v})})})}  \ot \uone{(\uzero{(\zero{v})})} \one{(\uzero{(\uzero{(\zero{v})})})}  \ot \uone{(\zero{v})} \one{v} 
\\
&=
\zero{(\uzero{(\zero{v})})}  \ot \uone{(\zero{v})} \one{(\uzero{ (\zero{v})})}  \ot \utwo{(\zero{v})} \one{v} 
\end{align*}
where the third equality follows from the condition \eqref{MHH} on $\uzero{(\zero{v})}$ and the last one from the fact that $\delta_2$ is a comodule map.
By using once again condition \eqref{MHH} on $\zero{v}$ we obtain
\begin{align*}
(\delta \ot \id_H)\circ \delta (v) &=
\uzero{(\zero{(\zero{v})})}  \ot  \uone{(\zero{ (\zero{v})})} \one{(\zero{v})} \ot \utwo{(\zero{v})} \one{v} 
\\
&=
\uzero{(\zero{v})}  \ot  \uone{(\zero{v})} \one{v} \ot \utwo{(\zero{v})} \two{v} 
\end{align*}
where the last equality uses that $\delta_1$ is a comodule map. By comparison with the formula obtained before for  $(\id_V \ot \Delta)\circ \delta (v)$ we can conclude that $\delta$ is a comodule map.
\end{proof}

On the $H$ comodule $A \ot_B A$, with  $\delta_1$ the coaction in \eqref{m3} and $\delta_2$ the coaction in \eqref{m1}, the
condition \eqref{MHH} is satisfied and  their composition $\delta_2 \circ \delta_1$, defined as in \eqref{comp-coactions}, is just the coaction  in \eqref{app-AAcoact}. On the other hand, on the $H$ comodule $A \ot H$ with coactions $\delta_1$ as in \eqref{m3} and $\delta_2$ as in \eqref{m1},  condition \eqref{MHH} is satisfied as well and  their composition $\delta_2 \circ \delta_1$ is the tensor product  coaction  in \eqref{app-AHcoact}.

\subsection{The translation map as a morphism of relative Hopf modules}\label{app:t}

The condition for the canonical map $\chi$ to be a morphism of
relative Hopf modules can  equivalently be expressed in terms of its
inverse. In particular,  it allows to infer the following properties
of  the translation map $\chi^{-1}{|_{_{1 \bot \underline{H}}}}= \tau:  H \ra A \ot_B A$ (cf. \cite[Prop.3.6]{brz-tr}).

The condition that $\can^{-1}$ is an $H$-comodule map with respect to the $H$-coactions in \eqref{m3} is  $\chi^{-1} \circ(\id \ot \Delta)= (\id \ot_B \delta^A)\circ \chi^{-1}$. This identity, restricted to $1 \ot H$,  implies the following property of the translation map:
$$
(\id \ot_B \delta^A)\circ \tau =(\tau \ot \id) \circ\Delta ~.
$$
That is,  $\tau:  H \ra A \ot_B A$ is an $H$-comodule map for $H$ with coaction given by $\Delta$ (that is the coaction in \eqref{m3} restricted to $1 \ot H$) and $A \ot_B A$ with coaction as in $\eqref{m3}$.

Similarly, the condition for  $\chi^{-1}$ to be a morphism in ${{\mathcal  M}}^{H}$ with respect to the $H$-coactions in \eqref{m1} gives
$$
[(\id \ot \mbox{flip}) \circ (\delta^A \ot_B \id)] \circ \tau=(  \tau  \ot  \id) \circ [(  \id  \ot  S) \circ \mbox{flip} \circ \Delta]  ~.
$$
Finally, being $\can_{|_{1 \ot H}}$ a comodule morphism with respect to the $H$-coactions \eqref{app-AAcoact} 
and \eqref{app-AHcoact}, one obtains that
$$
\delta^{A\otimes_B A} \circ \tau = (\tau \ot \id) \circ \Ad ~.
$$

\noindent
\textbf{Acknowledgments.}~\\[.5em]
We thank Rita Fioresi, Catherine Meusburger and  Alexander Schenkel for
 fruitful discussions. 
All authors are members of COST Action MP1405 \textit{QSPACE}. 
PA acknowledges support and hospitality from ICTP - 
INFN during his scientific visit in Trieste, and partial support from INFN, CSN4, Iniziativa
Specifica GSS and from INdAM-GNFM. 
This research has a financial support from Universit\`a del Piemonte Orientale. GL acknowledges partial support from INFN, Iniziativa
Specifica GAST and from INdAM-GNSAGA.
Part of the work of CP was carried out at  Universit\'e Catholique de Louvain (IRMP) in Louvain-la-neuve. CP  gratefully acknowledges support  by the Belgian Scientific Policy under IAP grant DYGEST, 
as well as support by  COST (Action MP1405)
 and INFN Torino during her scientific visits in
Torino.

\end{document}